\title[2D Tropical Climate Model]{Initial-boundary value problem for 2D temperature-dependent tropical climate model}
\author{Jitao Liu, \, Yunxiao Zhao}
\address[Jitao Liu]{Department of Mathematics, Faculty of Science, Beijing University of Technology, Beijing, 100124, P. R. China.}
\email{jtliu@bjut.edu.cn,\,\,\,jtliumath@qq.com}
\address[Yunxiao Zhao] {Department of Mathematics, Faculty of Science, Beijing University of Technology, Beijing, 100124, P. R. China.}
\email{yunxiaozhao0826@163.com}
\keywords{Initial-boundary value problem, tropical climate model, global well-posedness, exponential decay}
\thanks{{\em 2020 Mathematics Subject Classification.} 35B40, 35Q35, 76D03}
\theoremstyle{plain}
\newtheorem{corollary}{Corollary}[section]
\newtheorem{theorem}{Theorem}[section]
\newtheorem{lemma}{Lemma}[section]
\newtheorem{proposition}{Proposition}[section]
\theoremstyle{definition}
\newtheorem{definition}{Definition}[section]
\newtheorem{remark}{Remark}[section]
\let\f=\frac
\let\p=\partial
\def\R{\mathbb R}
\newcommand{\beq}{\begin{equation}}
\newcommand{\eeq}{\end{equation}}
\newcommand{\ben}{\begin{eqnarray}}
\newcommand{\een}{\end{eqnarray}}
\newcommand{\beno}{\begin{eqnarray*}}
\newcommand{\eeno}{\end{eqnarray*}}
\begin{document}

\begin{abstract}
It is well known that the tropical climate model is an important model to describe the interaction of large scale flow fields and precipitation in the tropical atmosphere. In this paper, we address the issue of global well-posedness for 2D temperature-dependent tropical climate model in a smooth bounded domain. Through classical energy estimates and De Giorgi-Nash-Moser iteration method, we obtain the global existence and uniqueness of strong solution in classical energy spaces. Compared with Cauchy problem, we establish more delicate a priori estimates with exponential decay rates. To the best of our knowledge, this is the first result concerning the global well-posedness for the initial-boundary value problem in 2D tropical climate model.
\end{abstract}
\maketitle

\section{Introduction and main results}
\label{intro}
\setcounter{equation}{0}

In this paper, we consider the following 2D temperature-dependent tropical climate model
\begin{equation}\label{3D}
\left\{\begin{array}{ll}
\bm{u}_t+(\bm{u}\cdot\nabla)\bm{u}-\nabla\cdot(\mu(\theta)\nabla\bm{u})
+\nabla p=-\nabla\cdot(\bm{v}\otimes\bm{v}),\\
\bm{v}_t+(\bm{u}\cdot\nabla)\bm{v}-\nabla\cdot(\nu(\theta)\nabla\bm{v})
=-\nabla\theta-(\bm{v}\cdot\nabla)\bm{u},\\
\theta_t+(\bm{u}\cdot\nabla)\theta-\nabla\cdot(\kappa(\theta)\nabla\theta)=-\nabla\cdot\bm{v},\\
\nabla\cdot\bm{u}=0,
\end{array}\right.
\end{equation}
where $\bm{u}=(u_1(\bm{x},t),u_2(\bm{x},t))$ and $\bm{v}=(v_1(\bm{x},t),v_2(\bm{x},t))$ stand for the barotropic mode and first baroclinic mode of the velocity field respectively, $p=p(\bm{x},t)$ is the scalar pressure and $\theta(\bm{x},t)$ represents the scalar temperature. Here $\bm{v}\otimes\bm{v}=(v_iv_j),\,i,j=1,2$ is the standard tensor notation. In addition, $\mu(\theta)$ and $\nu(\theta)$ represent the viscosities in the momentum equations, $\kappa(\theta)$ denotes the thermal diffusivity, which are assumed to be smooth in $\theta$ and satisfy
\beno
\mu(\theta),\,\nu(\theta),\,\kappa(\theta)\geq\sigma^{-1},\,\,\forall\,\theta\in\mathbb{R},
\eeno
for some constant $\sigma>0.$
\vskip .1in
It is well-known that the barotropic mode is established under the assumption that the isobaric surface and the isothermal surface coincide, which can be understood as an ideal state, while the baroclinic mode exists in the actual atmosphere, which possesses a phenomenon that energy can be converted and the isothermal surface intersects with the isobaric surface. As Majda described in \cite{JA} , momentum transfer between barotropic and baroclinic modes plays a significant role in the study of tropical-extratropical interactions, so it is necessary to retain both barotropic and baroclinic modes of velocity. Detailed physical backgrounds on tropical climate models can be found in \cite{B1,B2,LT0,JA}.
\vskip .1in
In fact, the tropical climate model can be traced back to the model proposed by Friedson-Majda-Pauluis \cite{FMP}, which originates from the inviscid original equations by performing Galerkin truncation to the first baroclinic mode, which also leads to the absence of Laplace term in the equation in \cite{FMP}. If we manage to consider the effect of viscosity on the velocity field and temperature, we can derive the equations \eqref{3D} from the original viscous equations by the same argument as the original inviscid equations. Readers interested in this field can refer to \cite{CC0,CC1,CC2,CC3,CC4,CC5,CC6}.
When $\mu(\theta)$, $\nu(\theta)$ and $\kappa(\theta)$ are positive constants, the global existence of smooth solution to the corresponding system \eqref{3D} can be established using the classical method.
Making use of innovative ideas, the introduction of an pseudo baroclinic velocity and auxiliary function, Li and Titi \cite{LT} established the global well-posedness of strong solution under the condition $\mu(\theta)=\nu(\theta)={\rm const.}>0$ and $\kappa(\theta)=0$. For the same system as \cite{LT}, Li, Zhai and Yin \cite{LZZ} obtained the global well-posedness in Besov spaces with negative order and small initial data. In addition, the tropical climate model with fractional dissipation has also attracted much attention, which can be seen in \cite{D1,D2,D3,D4,Y1,Y2}.
\vskip .1in
However, in some practical applications, such as plasma flow and mantle convection in geophysical fluid dynamics, viscosity and thermal diffusion coefficients are sensitive to temperature changes, usually as a function of $\theta$, so we should pay attention to the dependence on temperature. In this case, due to the existence of nonlinear term, the research of this problem is more challenging. For the whole space, Dong, Li, Xu and Ye \cite{DLXZ} proved that the system \eqref{3D} with general initial data has a unique global smooth solution in $H^s(\mathbb{R}^2), s>1$. Subsequently, Li, Xu and Ye \cite{LXY} studied its asymptotic behavior and obtained the sharp time-decay.
\vskip .1in
It is widely acknowledged that the motion of various fluids in nature usually takes place in bounded regions, and solutions in bounded domains usually exhibit different behaviors and richer phenomena than in the whole space. However, the initial-boundary value problem with respect to \eqref{3D} is still an open problem. In view of mathematics, the method of bounded domains and whole spaces differs when obtaining the desired estimates, mainly in dealing with the boundary terms. Because the spatial derivatives are not sufficient, we have to make full use of the technical means such as Sobolev embedding inequalities and the classical theory of elliptic systems.
\vskip .1in
In this paper, we will make the first attempt to investigate the initial-boundary value problem for the system \eqref{3D}. To this end, we can set the boundary condition of the equations as homogeneous Dirichlet boundary condition, i.e.,
\begin{equation}\label{eq2}
\bm{u}|_{\p \Omega}=0,\quad\bm{v}|_{\p \Omega}=0,\quad\theta|_{\p \Omega}=0
\end{equation}
together with the initial condition
\begin{equation}\label{eq20}
(\bm{u},\bm{v},\theta)(\bm{x},0)=(\bm{u}_0,\bm{v}_0,\theta_0)(\bm{x}),\quad\,\hbox{in}\,\,\Omega,\\
\end{equation}
where $\Omega\subset \R^2$ represents a bounded domain with smooth boundary.
\vskip .1in
Throughout the paper, we regard the letter C as the universal constant, which may vary from line to line. We also write $C(\sigma_1,\sigma_2,\cdots,\sigma_k)$ to denote that C depends on the quantities $\sigma_1,\sigma_2,\cdots,\sigma_k$ and identify $\mathbf{n}=(n_1,n_2)$ as the unit outer normal vector of the boundary $\partial\Omega$.
Moreover, for $1\leq p,r\leq\infty$ and $k\geq1$, the standard Lebesgue and Sobolev spaces can be considered as:
$$L^r=L^r(\Omega),\quad W^{k,r}=W^{k,r}(\Omega),\quad L_t^pL^r=L_t^pL^r(\Omega).$$

Now we are in the position to present the main results of this paper.
\begin{theorem}\label{T1}
Let $\Omega\subset\R^2$ be a bounded domain with smooth boundary.
\begin{enumerate}
\item[\textbf{(a)}] \textbf{Global weak solution:} Suppose $\theta_0\in L^\infty\cap H_0^1$ and $(\bm{u}_0,\bm{v}_0)\in H_0^1$ with $\nabla\cdot\bm{u}_0=0$,
then the system $\eqref{3D}-\eqref{eq20}$ admits a global weak solution.
In addition, for any $t>0$, the solution has the following decay estimate
$$\|\bm{u}(t)\|_{H^1}^2+\|\bm{v}(t)\|_{H^1}^2
+\|\theta(t)\|_{H^1}^2\leq Ce^{-\alpha t},$$
where the constants $C$ and $\alpha$ depend on $C_0,\Omega,\|\theta_0\|_{L^\infty},\|\theta_0\|_{H^1},
\|\bm{u}_0\|_{H^1}$ and $\|\bm{v}_0\|_{H^1}$ only;
\item[\textbf{(b)}] \textbf{Unique global strong solution:} On the basis of \textbf{(a)}, if further $(\bm{u}_0,\bm{v}_0,\theta_0)\in H_0^1\cap H^2$,
then the system $\eqref{3D}-\eqref{eq20}$ has a unique global strong solution $(\bm{u},\bm{v},\theta)$ satisfying
\beno
(\bm{u}_t,\bm{v}_t,\theta_t)\in C(0,\infty;L^2)\cap L^2(0,\infty;H_0^1)
\eeno
and
\beno
(\bm{u},\bm{v},\theta)\in C(0,\infty;H^2)\cap L^2(0,\infty;H^3).
\eeno
Besides, for any $t>0$, we obtain the exponential decay estimates
\beno
\|\bm{u}(t)\|_{H^2}^2+\|\bm{v}(t)\|_{H^2}^2
+\|\theta(t)\|_{H^2}^2\leq Ce^{-\alpha t},
\eeno
where $C$ and $\alpha$ are the constants which depend only on $C_0,\Omega,\|\theta_0\|_{H^2},
\|\bm{u}_0\|_{H^2}$ and $\|\bm{v}_0\|_{H^2}.$
\end{enumerate}
\end{theorem}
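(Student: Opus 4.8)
The plan is to derive a hierarchy of a priori estimates closed by the Poincaré inequality, realize them on a Galerkin (or linearized) approximation, pass to the limit by compactness, and finally prove uniqueness by a difference estimate; the exponential decay will be read directly off the differential inequalities. First I would record the basic $L^2$ balance: testing the three evolution equations by $\bm{u}$, $\bm{v}$, $\theta$ and summing, the divergence-free condition and the homogeneous Dirichlet data annihilate the convective and pressure terms, while the algebraic cancellations $\int(\bm{v}\otimes\bm{v}):\nabla\bm{u}=\int(\bm{v}\cdot\nabla)\bm{u}\cdot\bm{v}$ and $\int\nabla\theta\cdot\bm{v}=-\int\theta\,\nabla\cdot\bm{v}$ make every coupling term drop out. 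With $\mu,\nu,\kappa\ge\sigma^{-1}$ and Poincaré this gives $\frac{d}{dt}E+c\,E\le0$ for $E=\|\bm{u}\|_{L^2}^2+\|\bm{v}\|_{L^2}^2+\|\theta\|_{L^2}^2$, hence exponential decay of the $L^2$ norms together with the global bound $\int_0^\infty(\|\nabla\bm{u}\|_{L^2}^2+\|\nabla\bm{v}\|_{L^2}^2+\|\nabla\theta\|_{L^2}^2)\,dt<\infty$.

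Because every higher-order estimate differentiates the coefficients, I would next pin down $\|\theta\|_{L^\infty}$. Testing the temperature equation by the truncation $(\theta-k)_+$ and using that the source $-\nabla\cdot\bm{v}$ is in divergence form produces $\frac{d}{dt}\|(\theta-k)_+\|_{L^2}^2+\sigma^{-1}\|\nabla(\theta-k)_+\|_{L^2}^2\le\sigma\int_{\{\theta>k\}}|\bm{v}|^2$; since the $L^2$ estimate and the Ladyzhenskaya inequality yield $\bm{v}\in L^4_{t,x}$ in two dimensions, the right-hand side decays fast enough in the level $k$ and in the measure of $\{\theta>k\}$ to run the De Giorgi--Nash--Moser iteration, giving a uniform bound $\|\theta\|_{L^\infty}\le M$ depending only on the data. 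This upgrades the hypothesis to genuine two-sided control $\sigma^{-1}\le\mu,\nu,\kappa\le\bar M$ with bounded derivatives $|\mu'|,|\nu'|,|\kappa'|\le\bar M$ along the solution.

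With the coefficients controlled I would perform the $H^1$ estimate by testing with $-\Delta\bm{u}$, $-\Delta\bm{v}$, $-\Delta\theta$. The variable-coefficient diffusion produces the coercive terms $\int\mu|\Delta\bm{u}|^2$ etc.\ plus commutators such as $\int\mu'(\nabla\theta\cdot\nabla\bm{u})\cdot\Delta\bm{u}$, which together with the convective and coupling terms are absorbed via Young's inequality and the two-dimensional Gagliardo--Nirenberg/Ladyzhenskaya inequalities. The genuinely delicate point, where the whole-space argument fails, is the pressure/boundary contribution $\int\nabla p\cdot\Delta\bm{u}$, which no longer vanishes on a bounded domain; here I would invoke stationary Stokes regularity $\|\bm{u}\|_{H^2}\le C(\|\Delta\bm{u}\|_{L^2}+\|\bm{u}\|_{L^2})$ to recover full $H^2$ control from the coercive term. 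In two dimensions the resulting inequality $\frac{d}{dt}\mathcal{E}_1+c\,\mathcal{E}_1\le C\,\mathcal{E}_1^2+\cdots$ closes globally by the uniform Gronwall lemma using $\int_0^\infty\|\nabla(\cdot)\|_{L^2}^2\,dt<\infty$, and feeding the exponential $L^2$ decay back into the forcing turns it into $\frac{d}{dt}\mathcal{E}_1+c\,\mathcal{E}_1\le Ce^{-\alpha t}$, which is the $H^1$ decay of part \textbf{(a)}.

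For the strong solution in \textbf{(b)}, I would differentiate the system in time and test with $(\bm{u}_t,\bm{v}_t,\theta_t)$, using the $H^2$ data to make $(\bm{u}_t,\bm{v}_t,\theta_t)|_{t=0}\in L^2$ well-defined through the equations, obtaining $(\bm{u}_t,\bm{v}_t,\theta_t)\in L^\infty_tL^2\cap L^2_tH^1_0$ with exponential decay. Viewing each equation at fixed time as an elliptic (Stokes, resp.\ scalar) system whose right-hand side now lies in $L^2$, resp.\ $L^2_tH^1$, elliptic regularity then promotes the solution to $C(0,\infty;H^2)\cap L^2(0,\infty;H^3)$ with the stated decay; the care needed is that $\nabla\mu\cdot\nabla\bm{u}=\mu'\nabla\theta\cdot\nabla\bm{u}$ be treated as a forcing rather than a coefficient, since $\nabla\theta$ need not be in $L^\infty$ in two dimensions. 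Existence follows by running all estimates on the approximation and passing to the limit with the Aubin--Lions lemma, while uniqueness comes from an $L^2$ estimate on the difference of two solutions, the coefficient differences being controlled by $|\mu(\theta_1)-\mu(\theta_2)|\le C|\theta_1-\theta_2|$ and Gronwall. I expect the main obstacles to be the De Giorgi step for $\|\theta\|_{L^\infty}$ and, above all, the systematic replacement of whole-space integration by parts with Stokes/elliptic regularity in order to absorb the boundary and pressure terms.
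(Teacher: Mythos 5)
The most serious gap is in your De Giorgi step. You claim that the energy estimate plus Ladyzhenskaya, i.e.\ $\bm{v}\in L^4_{t,x}$, makes the right-hand side $\int_{\{\theta>k\}}|\bm{v}|^2$ decay fast enough to run the iteration. It does not. With only $\bm{v}\in L^4_{t,x}$, H\"older gives $A_k\leq C\|\bm{v}\|_{L^4_{t,x}}^2\big(\int_0^t|\{\theta\geq N_k\}|\,{\rm d}\tau\big)^{1/2}$, and since $|\{\theta\geq N_k\}|\leq(2^{k+1}/M)^4\|(\theta-N_{k-1})_+\|_{L^4}^4$ while $\|(\theta-N_{k-1})_+\|_{L^4_{t,x}}^4\lesssim A_{k-1}^2$, the recursion degenerates to $A_k\leq a\,b^k A_{k-1}$, \emph{linear} in $A_{k-1}$; then $A_k\leq a^k b^{k(k+1)/2}A_0$, which blows up for any fixed $a$ because $b>1$, so no choice of the level $M$ forces $A_k\to0$. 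Lemma \ref{giorgi} genuinely requires a superlinear exponent $\gamma>1$. In scaling terms, the effective forcing $|\bm{v}|^2$ lies only in $L^2_{t,x}$, which is exactly critical for parabolic De Giorgi in two space dimensions. This is precisely why the paper inserts a step you skipped: Proposition \ref{vLr} first proves $\|\bm{v}(t)\|_{L^r}\leq C(r)$ for every finite $r$ (testing the $\bm{v}$-equation with $|\bm{v}|^{r-2}\bm{v}$), which combined with $\nabla\bm{v}\in L^2_tL^2$ upgrades $\bm{v}$ to $L^6_{t,x}$; the H\"older exponent on the measure then becomes $2/3$ and the recursion closes as $A_k\leq a\,b^k A_{k-1}^{4/3}$.

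The $H^1$ stage of your plan also fails as written. Testing the temperature equation with $-\Delta\theta$ produces the commutator $\int\kappa'(\theta)|\nabla\theta|^2\Delta\theta\,{\rm d}\bm{x}\leq C\|\nabla\theta\|_{L^4}^2\|\Delta\theta\|_{L^2}\leq C\|\nabla\theta\|_{L^2}\|\Delta\theta\|_{L^2}^2+\cdots$, which is quadratic in the dissipation with the prefactor $\|\nabla\theta\|_{L^2}$ that is neither small nor integrable against the dissipation: it can be neither absorbed by Young nor removed by Gronwall. Eliminating exactly this term is the purpose of the paper's good unknown $\hat{\Theta}$ defined in \eqref{bigseta} (following \cite{SZ}): the equation for $\hat{\Theta}$ has constant-coefficient principal part $-\Delta\hat{\Theta}$ and no $\kappa'$ commutator, and is tested with $\hat{\Theta}_t$. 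Your treatment of the velocity has a parallel problem: the remedy you propose for $\int\nabla p\cdot\Delta\bm{u}$ is circular, since Stokes regularity controls $\|\nabla p\|_{L^2}$ only through $\|\bm{u}_t\|_{L^2}$ plus nonlinear terms, whereas testing with $-\Delta\bm{u}$ yields dissipation in $\|\Delta\bm{u}\|_{L^2}$ and no control of $\|\bm{u}_t\|_{L^2}$ at that stage. The paper's route in Proposition \ref{uv-H2} — test with $(\bm{u}_t,\bm{v}_t)$, for which the pressure term vanishes identically because $\bm{u}_t$ is divergence-free and vanishes on $\partial\Omega$, and only afterwards invoke the variable-coefficient Stokes estimate of Lemma \ref{nodiv} to convert $\|\bm{u}_t\|_{L^2}$-control into $\|\nabla^2\bm{u}\|_{L^2}$-control — avoids both difficulties, and your argument would have to be rerouted this way (or through the Stokes operator). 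The remaining parts of your outline (the basic $L^2$ decay, the time-differentiated estimates for part (b), the elliptic bootstrap to $H^3$, the uniqueness argument via the mean value theorem and Gronwall, and a compactness-based existence scheme in place of the paper's Schauder fixed point) are consistent with the paper, but they all sit downstream of the two broken steps above.
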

\begin{remark}
In contrast to Cauchy problem \cite{DLXZ}, we obtain the estimates independent of time $t$ such that the existence time can be taken to infinity.
\end{remark}
\begin{remark}
 Provided that one imposes more regular assumptions on the initial data and appropriate compatibility conditions, one can show that the solutions are as regular as initial data and decay exponentially in corresponding energy spaces.
\end{remark}
\begin{remark}
If we replace the boundary condition
$\bm{v}|_{\partial\Omega}=0$ with $\nabla\bm{v}\cdot\bm{n}|_{\partial\Omega}=0$,
then the results of Theorem \ref{T1} still hold except for the decay estimates.
\end{remark}
\begin{remark}
When the Dirichlet boundary condition
$\theta|_{\partial\Omega}=0$ is substituted by $\nabla\theta\cdot\bm{n}|_{\partial\Omega}=0$,
then the results of Theorem \ref{T1} still hold except for the decay estimates.
\end{remark}
Now we will make some comments on the analysis of this article. As usual, for the incompressible viscous flows, the key argument to establish the global existence and uniqueness of strong solution is the $L^{\infty}_tH^1$ estimates. However, due to the strong nonlinearity couplings in the right hand sides of system \eqref{3D}, there exists a gap between the basic energy estimates and the $L^{\infty}_tH^1$ estimates. To overcome it, for \eqref{3D}, one possible approach is working on the uniform $L^{\infty}_tL^r$ (with respect to $r$) estimates for $\theta$ through energy estimates. Unfortunately, for any $r\in(2,\infty)$, $\|\theta(t)\|_{L^{\infty}_tL^r}\leq C(r,\bm{u}_0,\bm{v}_0,\theta_0)$, which results in the failure to directly extend $r$ to the infinity.

Based on above analysis and motivated by \cite{DLXZ}, for any $r\in(2,\infty)$, we first build up the $L_t^\infty L^r$ estimates of $\bm{v}$ and then combine with De Giorgi-Nash-Moser iteration method, which allows us to acquire the $L_t^\infty L^\infty$ estimates of $\theta$. Then, in the spirit of \cite{SZ}, by using the good unknown
\ben\label{bigseta}
\hat{\Theta}=\int_0^\theta\kappa(z){\rm d}\bm{z},
\een
we can rewrite the third equation of \eqref{3D} as
\beno
\frac{1}{\kappa(\theta)}(\hat{\Theta}_t+\bm{u}\cdot\nabla\hat{\Theta})-\Delta\hat{\Theta}=-\nabla\cdot\bm{v},
\eeno
which constitutes a stepping stone to the $L_t^\infty H^1$ estimates of $\theta$. In this circumstance, it is accessible for us to attain the $L_t^\infty H^1$ estimates of $(\bm{u},\bm{v})$ and further higher order derivative estimates.

However, quite different from Cauchy problem \cite{DLXZ}, with the purpose of exploring large time behaviors, it is imperative for us to establish more delicate estimates independent of time $t$, which paves the way for exponential decay rate of solutions. In addition, due to the presence of boundary terms and lack of spatial derivatives on the boundary, we will frequently take advantage of the classical elliptic theory and Sobolev embeddings to obtain high order derivative estimates.

Here is the arrangement of the remaining chapters of the article. In section 2, plenty of helpful lemmas and classical results will be revealed. In section 3, we keep a close watch on the global existence of weak solutions. In section 4, we spare no effort to make great achievements in the unique strong solution.

\section{Preliminaries}
\label{prel}
\setcounter{equation}{0}
In this section, we will present some basic facts, including the inequality tools and relevant conclusions of the Stokes system, which will play a significant role in the subsequent sections.
Firstly, we show the definition of the weak solution of system $\eqref{3D}-\eqref{eq20}$.
\begin{definition}\label{d1}
Let $\Omega\subset\R^2$ be a bounded domain with smooth boundary and $(\bm{u},\bm{v},\theta)$ is named a global weak solution of the system
$\eqref{3D}-\eqref{eq20}$, if for any $T>0,\,(\bm{u},\bm{v},\theta)\in C(0,T;L^2)\cap L^2(0,T;H^1)$,\,$(\mu(\theta),\nu(\theta),\\
\kappa(\theta))\in L^2(0,T;L^2)$ and it follows that
\beno
&&\int_\Omega\bm{u}_0\cdot\bm{\varphi}_0{\rm d}\bm{x}+\int_0^T\int_\Omega[\bm{u}\cdot\bm{\varphi}_t
+\bm{u}\cdot\nabla\bm{\varphi}\cdot\bm{u}-\mu(\theta)\nabla\bm{u}:\nabla\bm{\varphi}
+(\bm{v}\otimes\bm{v}):\nabla\bm{\varphi}]{\rm d}\bm{x}{\rm d}t=0,\\
&&\int_\Omega\bm{v}_0\cdot\bm{\psi}_0{\rm d}\bm{x}+\int_0^T\int_\Omega[\bm{v}\cdot\bm{\psi}_t
+\bm{u}\cdot\nabla\bm{\psi}\cdot\bm{v}-\nu(\theta)\nabla\bm{v}:\nabla\bm{\psi}
+(\nabla\cdot\bm{v})\bm{u}\cdot\bm{\psi}+\bm{v}\cdot\nabla\bm{\psi}\cdot\bm{u}]{\rm d}\bm{x}{\rm d}t=0,\\
&&\int_\Omega\theta_0\phi_0{\rm d}\bm{x}+\int_0^T\int_\Omega(\theta\phi_t+\bm{u}\cdot\nabla\phi\theta-\kappa(\theta)
\nabla\theta\cdot\nabla\phi+\bm{v}\cdot\nabla\phi){\rm d}\bm{x}{\rm d}t=0,
\eeno
for any test vector functions $\bm{\varphi}\in C_0^\infty([0,T)\times\Omega)^2$ with $\nabla\cdot\bm{\varphi}=0$, $\bm{\psi}\in C_0^\infty([0,T)\times\Omega)^2$ satisfying $\nabla\cdot\bm{\psi}=0$ and any test scaler functions $\phi\in C_0^\infty([0,T)\times\Omega)$, where $A:B=\sum\limits_{i,j}a_{ij}b_{ij}$.
\end{definition}
The next lemma is the well-known Gagliardo-Nirenberg inequality in bounded domains (see e.g. \cite{GN}).
\begin{lemma}\label{P1}
Let $\Omega\subset\R^n$ be a bounded domain with smooth boundary. Let
$1\leq p, q, r \leq\infty$ be real numbers and $j\le m$ be non-negative
integers. If a real number $\alpha$ satisfies
$$\frac{1}{p} - \frac{j}{n} = \alpha\,\left( \frac{1}{r} - \frac{m}{n} \right) + (1 - \alpha)\frac{1}{q}, \qquad \frac{j}{m} \leq \alpha \leq 1,
$$
then
$$\| \mathrm{D}^{j} f \|_{L^{p}} \leq C_{1} \| \mathrm{D}^{m} f \|_{L^{r}}^{\alpha} \| f \|_{L^{q}}^{1 - \alpha} + C_{2} \|f\|_{L^{s}},$$
where $s > 0$, and the constants $C_1$ and $C_2$ depend upon $\Omega$ and the indices $p,q,r,m,j,s$ only.
\end{lemma}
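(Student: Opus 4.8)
The plan is to reduce this bounded-domain inequality to the classical scale-invariant Gagliardo-Nirenberg inequality on $\R^n$ by means of an extension operator, and then to recover the additive term $C_2\|f\|_{L^s}$ through an absorption argument. First I would establish the homogeneous inequality on the whole space,
$$\|\mathrm{D}^j g\|_{L^p(\R^n)}\le C\,\|\mathrm{D}^m g\|_{L^r(\R^n)}^{\alpha}\,\|g\|_{L^q(\R^n)}^{1-\alpha},\qquad g\in C_c^\infty(\R^n),$$
which carries no lower-order remainder because it is scale invariant. Its base case $j=0$, $m=1$, $r=1$, $q=\infty$ comes from the fundamental theorem of calculus along each coordinate axis combined with H\"older's inequality (the Gagliardo-Nirenberg-Sobolev embedding); general exponents follow by applying the base case to powers $|g|^{\gamma}$, and interpolation between derivative orders $0<j<m$ follows from integration-by-parts estimates such as $\|\mathrm{D}g\|_{L^p}^{2}\le C\,\|\mathrm{D}^2 g\|_{L^r}\,\|g\|_{L^q}$, iterated. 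The exponent $\alpha$ is forced by dimensional analysis: substituting $g(\lambda\,\cdot)$ and demanding that both sides scale identically in $\lambda$ reproduces precisely the stated relation among $1/p-j/n$, $1/r-m/n$ and $1/q$.

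Next I would transfer the estimate to $\Omega$. Since $\partial\Omega$ is smooth, there is a bounded linear extension operator $E$ with $Ef|_\Omega=f$ that is simultaneously bounded from $W^{k,\rho}(\Omega)$ to $W^{k,\rho}(\R^n)$ for every order and exponent occurring in the statement. Applying the whole-space inequality to $Ef$ (after mollification, to justify the smooth case) and restricting to $\Omega$ yields
$$\|\mathrm{D}^j f\|_{L^p(\Omega)}\le\|\mathrm{D}^j(Ef)\|_{L^p(\R^n)}\le C\,\|\mathrm{D}^m(Ef)\|_{L^r(\R^n)}^{\alpha}\,\|Ef\|_{L^q(\R^n)}^{1-\alpha}.$$
The low-order factor is controlled by $\|f\|_{L^q(\Omega)}$, while boundedness of $E$ bounds the top-order factor by the full Sobolev norm $\|f\|_{W^{m,r}(\Omega)}$, not by the pure seminorm $\|\mathrm{D}^m f\|_{L^r}$ alone.

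The remaining and most delicate step is to replace $\|f\|_{W^{m,r}(\Omega)}$ by $\|\mathrm{D}^m f\|_{L^r}+C\|f\|_{L^s}$. For this I would prove the intermediate estimates $\|\mathrm{D}^k f\|_{L^r(\Omega)}\le\varepsilon\,\|\mathrm{D}^m f\|_{L^r(\Omega)}+C_\varepsilon\|f\|_{L^s}$ for $0\le k<m$ by a Rellich-Kondrachov compactness-contradiction argument, which is independent of the inequality being proved and hence non-circular. Inserting these into the full norm, choosing $\varepsilon$ small, and then distributing the resulting sum across the $\alpha$-th power by a Young-type splitting produces the clean product $C_1\|\mathrm{D}^m f\|_{L^r}^{\alpha}\|f\|_{L^q}^{1-\alpha}$ together with the genuinely additive remainder $C_2\|f\|_{L^s}$. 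The hard part is exactly this absorption: on $\R^n$ scaling removes any additive term, whereas on a bounded domain scaling is lost, so one must carefully track how each intermediate-order norm is traded either into the top-order seminorm (at cost $\varepsilon$) or into the lowest-order $L^s$ norm. Extra care is needed at the borderline configurations where $m-j-n/r$ is a nonnegative integer with $r>1$, for which $\alpha=1$ must be excluded and the interpolation exponent degenerates; there the extension-plus-compactness scheme still closes but the endpoint must be handled separately.
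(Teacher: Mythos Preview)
The paper does not prove this lemma at all: it is stated as the ``well-known Gagliardo--Nirenberg inequality in bounded domains'' and simply attributed to Nirenberg's original paper \cite{GN}. So there is no in-paper argument to compare against; the authors treat it as a black-box tool.

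Your sketch is a reasonable outline of the standard route to the bounded-domain version: prove the scale-invariant inequality on $\R^n$, transfer to $\Omega$ via a Sobolev extension operator, and then trade the full $W^{m,r}$ norm down to the top seminorm plus a low-order $L^s$ remainder using an interpolation/compactness argument. That is essentially how Nirenberg's paper and later textbook treatments proceed, so your proposal is aligned with the cited source rather than in tension with it. One small caution: your description of the $\R^n$ base case is compressed to the point of being schematic (the iteration from the $j=0$, $m=1$ case to general $j,m$ and the handling of the endpoint $\alpha=j/m$ require more than ``integration-by-parts estimates, iterated''), and the exclusion you flag at the end (the exceptional case $m-j-n/r$ a nonnegative integer with $1<r<\infty$ forcing $\alpha<1$) is genuinely part of the statement in Nirenberg's theorem even though the paper's lemma omits it. But as a plan there is no real gap.
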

The following lemma involves an iterative sequence that exerts a huge effect on the proof of Proposition \ref{seta-infinityest} (see \cite{Y3}).
\begin{lemma}\label{giorgi}
Let $a>0$, $b>1$ and $\gamma>1$. Assume that the nonnegative sequence $\{A_k\}_{k=0}^\infty$ satisfies the recurrence relation
\ben\label{yasuo1}
A_{k+1}\leq ab^kA_k^\gamma.
\een
If $A_0$ satisfies
\ben\label{yasuo2}
A_0\leq a^{-\frac{1}{\gamma-1}}b^{-\frac{1}{(\gamma-1)^2}},
\een
then we have $$\lim_{k\to\infty}A_k=0.$$
\end{lemma}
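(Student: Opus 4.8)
The plan is to prove the stronger quantitative statement that $\{A_k\}$ decays geometrically, namely that there is a fixed ratio $\xi\in(0,1)$ with
\[
A_k\le A_0\,\xi^{k}\qquad\text{for all }k\ge 0,
\]
from which $\lim_{k\to\infty}A_k=0$ is immediate since $A_0$ is finite. I would establish this bound by induction on $k$, so that the whole argument reduces to choosing $\xi$ correctly and checking that the inductive step closes under the smallness hypothesis \eqref{yasuo2}.

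The base case $A_0\le A_0\xi^{0}$ is trivial. For the inductive step, assuming $A_k\le A_0\xi^{k}$ and feeding it into the recurrence \eqref{yasuo1} gives $A_{k+1}\le ab^{k}A_k^{\gamma}\le a\,A_0^{\gamma}\,b^{k}\,\xi^{k\gamma}$, and I want to dominate this by $A_0\xi^{k+1}$. After dividing through by $A_0\xi^{k+1}$, the requirement becomes $a\,A_0^{\gamma-1}\,b^{k}\,\xi^{k\gamma-k-1}\le 1$ for every $k$. The crucial observation is that the $k$-dependence can be eliminated by choosing $\xi=b^{-1/(\gamma-1)}$: then $\xi^{k(\gamma-1)}=b^{-k}$ cancels the factor $b^{k}$ exactly, leaving the $k$-independent condition $a\,A_0^{\gamma-1}\,b^{1/(\gamma-1)}\le 1$. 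Note that $\xi\in(0,1)$ precisely because $b>1$ and $\gamma>1$.

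Finally I would observe that the surviving condition $a\,A_0^{\gamma-1}\,b^{1/(\gamma-1)}\le1$ is, after taking $(\gamma-1)$-th roots, identical to the assumption $A_0\le a^{-1/(\gamma-1)}b^{-1/(\gamma-1)^2}$ in \eqref{yasuo2}. In other words, the hypothesis is exactly the threshold that makes the geometric induction close. With the induction completed we obtain $A_k\le A_0\,b^{-k/(\gamma-1)}\to 0$, which is the desired conclusion.

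Rather than a genuine obstacle, the only real content here is guessing the correct ansatz: one must recognize that a pure geometric bound (rather than one carrying an extra super-exponential factor) already suffices, and that the exponent $\frac{1}{(\gamma-1)^2}$ appearing in \eqref{yasuo2} is forced precisely by the cancellation $b^{k}\xi^{k(\gamma-1)}=1$ together with the leftover $\xi^{-1}=b^{1/(\gamma-1)}$. Once the ratio $\xi=b^{-1/(\gamma-1)}$ is identified, the remainder is elementary algebra.
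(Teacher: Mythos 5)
Your proof is correct: the induction with geometric ratio $\xi=b^{-1/(\gamma-1)}$ closes exactly under hypothesis \eqref{yasuo2}, yielding $A_k\le A_0\,b^{-k/(\gamma-1)}\to 0$. The paper itself gives no proof of this lemma (it simply cites \cite{Y3}), and your argument is precisely the standard induction used for this classical De Giorgi-type iteration lemma, so there is nothing to compare beyond noting agreement.
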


Next we focus on the classical results of regularized estimates for elliptic systems defined on bounded domains, which pave the way for Proposition \ref{uv-H2} (see e.g. \cite{Bian}, \cite{evans}, \cite{L-P-Z-2011}, \cite{SZ}, \cite{Wang}, \cite{Wang2}).
\begin{lemma}\label{wup}
Suppose $\Omega\subset\R^2$ be a bounded domain with smooth boundary and consider the elliptic boundary value problem
\begin{equation}\label{nop}
\left\{\begin{array}{ll}
-\Delta f=g\quad&\hbox{in}\,\,\Omega,\\
f=0\quad&\hbox{on}\,\,\p{\Omega}.
\end{array}\right.
\end{equation}
Then for any $p\in(1,\infty)$ and $g\in W^{m,p}$, \eqref{nop} exists a unique solution $f$ satisfying
\beno
\|f\|_{W^{m+2,p}}\leq C\|g\|_{W^{m,p}},
\eeno
where $m\geq-1$ be an integer and the constant $C=C(\Omega,m,p)$.
\end{lemma}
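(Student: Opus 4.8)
The plan is to reduce the statement to the classical Calder\'on--Zygmund / Agmon--Douglis--Nirenberg theory, treating first the baseline case $p=2$, then general $p\in(1,\infty)$, and finally inducting on the regularity index $m$. For $p=2$ and $m=0$, existence and uniqueness of a weak solution $f\in H_0^1$ follow at once from the Lax--Milgram theorem applied to the bilinear form $\int_\Omega\nabla f\cdot\nabla\phi\dd\bm{x}$, which is coercive on $H_0^1$ by Poincar\'e's inequality; uniqueness is immediate since $g=0$ forces $\norm{\nabla f}_{L^2}=0$. The $H^2$ bound $\norm{f}_{H^2}\le C\norm{g}_{L^2}$ is then obtained by Nirenberg's difference-quotient method: tangential difference quotients are controlled by testing the weak formulation against translates (which respect the homogeneous Dirichlet condition after flattening the boundary), and the single missing second normal derivative is read off algebraically from the equation $\p_{nn}f=-g-\p_{tt}f-(\text{lower order})$.

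For general $p\in(1,\infty)$ the heart of the matter is the $L^p$ a priori estimate for the full Hessian. First I would establish it on the whole space $\R^2$: writing $f=N*g$ with $N$ the Newtonian potential, the second derivatives $\p_i\p_j f$ are given by Calder\'on--Zygmund singular integral operators with kernels homogeneous of degree $-2$ and mean zero on the unit circle, hence bounded on $L^p(\R^2)$ for every $1<p<\infty$. To pass to the bounded domain I would use a finite partition of unity subordinate to a cover of $\overline{\Omega}$. On interior patches the localized function satisfies a whole-space problem and inherits the interior estimate directly. On boundary patches I would flatten $\p\Omega$ by a smooth diffeomorphism onto a piece of the half-space $\{x_2>0\}$; the Laplacian transforms into a second-order operator with smooth coefficients that is a small perturbation of a constant-coefficient elliptic operator, for which the half-space estimate (via odd reflection across $\{x_2=0\}$, consistent with the Dirichlet condition) again follows from Calder\'on--Zygmund theory. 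The commutator and lower-order terms produced by the cutoffs and the change of variables are absorbed by interpolation, using Ehrling's lemma to trade a small multiple of $\norm{f}_{W^{2,p}}$ against $\norm{f}_{W^{1,p}}$, and then $\norm{f}_{W^{1,p}}$ against $\norm{g}_{L^p}$ through the already-established lower-order gradient bound.

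With the case $m=0$ in hand, the general $m\ge0$ follows by induction: differentiating $-\Delta f=g$ shows that each derivative solves $-\Delta(\p^\alpha f)=\p^\alpha g$, and near the boundary the flattening procedure together with the smoothness of $\p\Omega$ lets one estimate tangential derivatives of all orders while recovering normal derivatives from the equation, yielding $\norm{f}_{W^{m+2,p}}\le C\norm{g}_{W^{m,p}}$. The borderline index $m=-1$, where $g\in W^{-1,p}$ and one wants $f\in W^{1,p}$, is handled by duality: since $W^{-1,p}=(W_0^{1,p'})^*$ with $\f1p+\f1{p'}=1$, the desired gradient bound is dual to the $W^{1,p'}$ estimate for the same self-adjoint Dirichlet Laplacian, which is the $m=0$ gradient estimate already proved.

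The main obstacle is the boundary estimate in the general-$p$ step. The interior Calder\'on--Zygmund bound is essentially a direct quotation of singular-integral theory, but controlling the full Hessian up to $\p\Omega$ requires the flattening, the careful separation of tangential derivatives (handled by difference quotients compatible with $f|_{\p\Omega}=0$) from the normal--normal derivative (recovered algebraically from the equation), and the absorption of all error terms generated by the diffeomorphism and the partition of unity. Everything else---existence, uniqueness, and the induction on $m$---is routine once this boundary regularity is secured.
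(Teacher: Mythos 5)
The paper itself offers no proof of Lemma \ref{wup}: it is quoted as a classical fact with citations (Evans, Sun--Zhang, Lai--Pan--Zhao, etc.), so the only question is whether your sketch is sound on its own terms. For $m\ge 0$ it is: Lax--Milgram plus Poincar\'e for existence and uniqueness, difference quotients for $H^2$, interior Calder\'on--Zygmund bounds from the Newtonian potential, boundary flattening with odd reflection, absorption of commutator and lower-order terms by Ehrling-type interpolation, and induction on $m$ with tangential derivatives estimated first and the normal--normal derivative recovered from the equation. This is the standard Agmon--Douglis--Nirenberg route and the steps fit together.

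The genuine gap is the endpoint $m=-1$. Your duality argument does not yield the claimed bound. The $m=0$ theory gives boundedness of the solution operator $T\colon g\mapsto f$ from $L^{p'}$ to $W_0^{1,p'}$; its adjoint, using $(W_0^{1,p'})^*=W^{-1,p}$, $(L^{p'})^*=L^p$ and the self-adjointness of the Dirichlet Laplacian, is the same operator $T$ viewed as a map $W^{-1,p}\to L^p$. Hence duality from $m=0$ produces only $\norm{f}_{L^p}\le C\norm{g}_{W^{-1,p}}$ --- a bound missing one derivative --- and not $\norm{f}_{W^{1,p}}\le C\norm{g}_{W^{-1,p}}$. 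The true dual of the $m=-1$ estimate at exponent $p$ is the $m=-1$ estimate at exponent $p'$ (the problem is self-dual), so duality alone can never bootstrap this case from $m=0$: you would be assuming exactly what you want to prove. Two standard repairs: (i) write $g\in W^{-1,p}$ as $g=g_0+\nabla\cdot G$ with $g_0,\,G\in L^p$ (a Hahn--Banach/Riesz representation fact requiring no elliptic theory) and prove $\norm{\nabla f}_{L^p}\le C\norm{G}_{L^p}$ for $-\Delta f=\nabla\cdot G$ directly, by the same Calder\'on--Zygmund/localization/reflection scheme you used for $m=0$, now applied to the kernel $\nabla^2 N$ acting on $G$ (Simader's theorem); or (ii) interpolate between $T\colon L^p\to W^{2,p}$ and its genuine dual $T\colon (W^{2,p'}\cap W_0^{1,p'})^*\to L^p$, identifying the midpoint spaces with $W^{-1,p}$ and $W_0^{1,p}$. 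Either is routine but is an additional argument, not a corollary of what you have. Incidentally, the paper only ever invokes the lemma with $m\ge0$ and $p=2$, so the flawed endpoint has no downstream effect there, but as a proof of the lemma as stated your proposal is incomplete.
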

\begin{remark}\label{tidu}
On the basis of Lemma \ref{wup}, if $m=0$, there holds
$\|\nabla^2 f\|_{L^p}\leq C\|\Delta f\|_{L^p},$ where $C=C(\Omega,p)$.
\end{remark}
Evidently, the 2D Stokes system with variable coefficient in a bounded smooth domain $\Omega\subset\R^2$ is as follows:
\begin{equation}\label{2D}
\left\{\begin{array}{ll}
-\nabla\cdot(\mu(\bm{x})\nabla\bm{u})+\nabla p=\bm{f}\quad&\hbox{in}\,\,\Omega,\\
\nabla\cdot\bm{u}=0\quad&\hbox{in}\,\,\Omega,\\
\bm{u}=0\quad&\hbox{on}\,\,\p{\Omega}.
\end{array}\right.
\end{equation}
Here the coefficient $\mu(\bm{x})$ is a smooth function satisfying
$$0<C_{\min}\leq\mu(\bm{x})\leq C_{\max}<\infty.$$
For any $\bm{f}\in H^{-1}$, there exists a unique weak solution $(\bm{u},p)\in H_0^1\times L^2$ with $\int_\Omega p(\bm{x})d\bm{x}=0$ satisfying
\ben\label{pL2}
\|\bm{u}\|_{H^1}+\|p\|_{L^2}\leq C\|\bm{f}\|_{H^{-1}},
\een
where the constant $C=C(C_{\min},C_{\max},\Omega).$
\vskip .1in
\begin{lemma}\label{nodiv}
Let $(\bm{u},p)\in H^2\times H^1$ be a solution of the Stokes system of non-divergence form
\begin{equation}\label{non-div}
\left\{\begin{array}{ll}
-\mu(\bm{x})\Delta\bm{u}+\nabla p=\bm{f}\quad&\hbox{in}\,\,\Omega,\\
\nabla\cdot\bm{u}=0\quad&\hbox{in}\,\,\Omega,\\
\bm{u}=0\quad&\hbox{on}\,\,\p{\Omega}.
\end{array}\right.
\end{equation}
Then there exists a constant $C=C(C_{\min},C_{\max},\Omega)$ such that
$$\|\nabla^2\bm{u}\|_{L^2}+\|\nabla p\|_{L^2}\leq C(\|\bm{f}\|_{L^2}+\|\bm{u}\|_{H^1}+\|p\|_{L^2}).$$
\end{lemma}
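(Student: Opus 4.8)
The plan is to reduce the non-divergence system \eqref{non-div} to the classical constant-coefficient Stokes system and then invoke the standard $H^2$--$H^1$ Stokes regularity. The point to watch is that a direct estimate of the two target quantities $\|\nabla^2\bm u\|_{L^2}$ and $\|\nabla p\|_{L^2}$ is circular: dividing the momentum equation by $\mu$ and reading it as a Poisson system $-\Delta\bm u=\mu^{-1}(\bm f-\nabla p)$ gives, by Remark \ref{tidu} together with $\mu\geq C_{\min}$,
\[
\|\nabla^2\bm u\|_{L^2}\leq C\|\Delta\bm u\|_{L^2}\leq C_{\min}^{-1}(\|\bm f\|_{L^2}+\|\nabla p\|_{L^2}),
\]
so the pressure gradient---which is precisely one of the quantities we wish to bound---reappears on the right-hand side. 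Hence the heart of the matter is an independent control of $\|\nabla p\|_{L^2}$.

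To break the coupling I would change the pressure unknown. Using the identity $\mu^{-1}\nabla p=\nabla(p/\mu)+p\,\mu^{-2}\nabla\mu$ and setting $\tilde p:=p/\mu$, division of the first equation of \eqref{non-div} by $\mu$ turns it into the constant-coefficient Stokes system
\[
-\Delta\bm u+\nabla\tilde p=\tilde{\bm F},\qquad \nabla\cdot\bm u=0\ \ \text{in}\ \Omega,\qquad \bm u|_{\partial\Omega}=0,
\]
with forcing $\tilde{\bm F}:=\mu^{-1}\bm f-p\,\mu^{-2}\nabla\mu$. Since $\mu$ is smooth on the bounded domain $\Omega$ we have $\nabla\mu\in L^\infty$, so that $\tilde{\bm F}\in L^2$ with $\|\tilde{\bm F}\|_{L^2}\leq C(\|\bm f\|_{L^2}+\|p\|_{L^2})$, and likewise $\|\tilde p\|_{L^2}\leq C_{\min}^{-1}\|p\|_{L^2}$. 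The key feature of this substitution is that all dependence on $\mu$ beyond its two-sided bounds $C_{\min},C_{\max}$ is now confined to the single lower-order term $p\,\mu^{-2}\nabla\mu$.

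Next I would apply the classical Stokes regularity estimate with lower-order terms to this reduced system, namely
\[
\|\nabla^2\bm u\|_{L^2}+\|\nabla\tilde p\|_{L^2}\leq C(\|\tilde{\bm F}\|_{L^2}+\|\bm u\|_{H^1}+\|\tilde p\|_{L^2}),
\]
which is available for smooth bounded domains (see e.g. \cite{Bian,evans,SZ}). Inserting the bounds for $\tilde{\bm F}$ and $\tilde p$ produces a right-hand side of the form $\|\bm f\|_{L^2}+\|\bm u\|_{H^1}+\|p\|_{L^2}$. Finally I would transfer back to the genuine pressure via $\nabla p=\mu\nabla\tilde p+\tilde p\,\nabla\mu$, so that $\|\nabla p\|_{L^2}\leq C_{\max}\|\nabla\tilde p\|_{L^2}+\|\nabla\mu\|_{L^\infty}\|\tilde p\|_{L^2}$, and add this to the velocity bound delivered by the same estimate to reach the claim.

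I expect the pressure estimate to be the only genuine obstacle. The underlying subtlety is that a second-order regularity bound for the variable-coefficient operator $-\mu\Delta$ would in general also depend on the modulus of continuity of $\mu$, and a naive freezing-of-coefficients argument would require a smallness of the oscillation that is not available globally; the change of variables $\tilde p=p/\mu$ is exactly what isolates that dependence into the harmless lower-order term $p\,\mu^{-2}\nabla\mu$ and allows one to fall back on constant-coefficient theory, keeping the constant of the asserted form $C(C_{\min},C_{\max},\Omega)$ under the standing smoothness assumption on $\mu$. The velocity part is then routine once $\|\nabla p\|_{L^2}$ is in hand.
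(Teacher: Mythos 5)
Your reduction is arithmetically correct, but it proves a strictly weaker statement than Lemma \ref{nodiv}: the constant it produces depends on $\|\nabla\mu\|_{L^\infty}$, not only on $(C_{\min},C_{\max},\Omega)$. The dependence enters at two unavoidable points of your scheme: first in the forcing, since $\|\tilde{\bm F}\|_{L^2}\leq C_{\min}^{-1}\|\bm f\|_{L^2}+C_{\min}^{-2}\|\nabla\mu\|_{L^\infty}\|p\|_{L^2}$, and second in the back-substitution $\nabla p=\mu\nabla\tilde p+\tilde p\,\nabla\mu$. Calling $p\,\mu^{-2}\nabla\mu$ a ``harmless lower-order term'' conflates two different things: it is lower order in the derivative count of $(\bm u,p)$, which is why it is harmless for the qualitative membership $\tilde{\bm F}\in L^2$, but it is not harmless for the quantitative claim, because $\|\nabla\mu\|_{L^\infty}$ is in no way controlled by $C_{\min}$, $C_{\max}$ and $\Omega$. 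What your argument actually yields is $\|\nabla^2\bm u\|_{L^2}+\|\nabla p\|_{L^2}\leq C(C_{\min},C_{\max},\Omega,\|\nabla\mu\|_{L^\infty})\,(\|\bm f\|_{L^2}+\|\bm u\|_{H^1}+\|p\|_{L^2})$. For comparison: the paper itself does not prove this lemma at all; it quotes it from the cited literature (notably \cite{SZ}), and the essential feature of the quoted result is precisely that no norm of $\nabla\mu$ (nor any modulus of continuity of $\mu$) appears in the constant.

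This uniformity is not a cosmetic refinement; it is the entire content of the lemma and the reason it can be used in this paper. In Proposition \ref{uv-H2}, estimate \eqref{uvH28}, the lemma is applied with $\mu=\mu(\theta(\cdot,t))$, so that $\nabla\mu=\mu'(\theta)\nabla\theta$, and at that stage $\nabla\theta$ is itself one of the quantities being estimated: only $L^2$-based bounds on $\nabla\theta$ and $\nabla^2\theta$ are available, and $\|\nabla\theta\|_{L^\infty}$ is never controlled anywhere in the paper. With a constant depending on $\|\nabla\mu\|_{L^\infty}$, inequality \eqref{uvH28} would be circular and the $H^1$-level estimates for $(\bm u,\bm v)$ would collapse; your own closing remark that a naive argument ``would in general also depend on the modulus of continuity of $\mu$'' applies verbatim to your substitution, which merely trades that dependence for a dependence on $\|\nabla\mu\|_{L^\infty}$. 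Any correct proof must never differentiate $\mu$. One way to see what such a proof can use in two dimensions: since $\nabla\cdot\bm u=0$, one has $\Delta\bm u=\nabla^{\perp}\omega$ with $\omega=\mathrm{curl}\,\bm u$, so taking the curl of \eqref{non-div} gives the divergence-form equation $\nabla\cdot(\mu\nabla\omega)=-\mathrm{curl}\,\bm f$ together with the pointwise relation $\nabla p=\bm f+\mu\nabla^{\perp}\omega$; here $\mu$ only appears undifferentiated, inside a divergence or as a multiplier, and the genuine difficulty (which is where the cited proof does its work) is the estimate of $\nabla\omega$ near $\partial\Omega$, since $\omega$ carries no usable boundary condition. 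Your change of pressure variable structurally forces $\nabla\mu$ into the estimate, so the gap lies in the approach itself, not in a repairable detail.
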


\begin{corollary}\label{C1}
For the solution $(\bm{u},p)$ in Lemma \ref{nodiv}, if one further assumes $\bm{f}\in H^1$, then there holds
$$\|\nabla^3\bm{u}\|_{L^2}+\|\nabla^2p\|_{L^2}\leq C(\|\bm{f}\|_{H^1}+\|\bm{u}\|_{H^2}+\|p\|_{H^1})$$
with $C=C(C_{\min},C_{\max},\Omega).$
\end{corollary}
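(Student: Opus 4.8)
The plan is to upgrade the $H^2\times H^1$ estimate of Lemma \ref{nodiv} by one derivative, bootstrapping through spatial differentiation of the system. Differentiating the momentum equation in the direction $x_k$ gives
\[
-\mu(\bm x)\Delta(\partial_k\bm u)+\nabla(\partial_k p)=\partial_k\bm f+(\partial_k\mu)\Delta\bm u=:\bm F_k,
\qquad \nabla\cdot(\partial_k\bm u)=0,
\]
so $(\partial_k\bm u,\partial_k p)$ solves a Stokes system of the same non-divergence type \eqref{non-div} with new forcing $\bm F_k$. Since $\mu$ is smooth and bounded below by $C_{\min}$, one has $\|\bm F_k\|_{L^2}\le\|\bm f\|_{H^1}+C\|\bm u\|_{H^2}$. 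Were $\partial_k\bm u$ to vanish on $\partial\Omega$, applying Lemma \ref{nodiv} to this system and summing over $k$ would yield the corollary at once; the entire difficulty is thus the boundary, since $\partial_k\bm u|_{\partial\Omega}=0$ only for tangential $\partial_k$ and the pressure carries no prescribed boundary data.

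I would therefore argue by localization and boundary flattening. In the interior, standard interior elliptic estimates control every $\partial_k$ derivative. In a boundary chart, writing $\partial_\tau$ for the tangential and $\partial_n$ for the normal direction, I differentiate only tangentially (equivalently, use tangential difference quotients), which preserves the homogeneous Dirichlet condition $\partial_\tau\bm u|_{\partial\Omega}=0$; Lemma \ref{nodiv} applied to the tangentially differentiated system then controls $\|\nabla^2\partial_\tau\bm u\|_{L^2}$ and $\|\nabla\partial_\tau p\|_{L^2}$, that is, all third derivatives of $\bm u$ and all second derivatives of $p$ carrying at least one tangential derivative, by the right-hand side $\|\bm f\|_{H^1}+\|\bm u\|_{H^2}+\|p\|_{H^1}$.

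The purely normal top-order derivatives are then recovered algebraically, which is what breaks the apparent circularity between the $\nabla^3\bm u$ and $\nabla^2 p$ bounds. Applying $\partial_n^2$ to the constraint $\partial_\tau u_\tau+\partial_n u_n=0$ gives $\partial_n^3 u_n=-\partial_\tau\partial_n^2 u_\tau$, whose right-hand side already carries a tangential derivative and is hence controlled; differentiating the normal component of the momentum equation once in $x_n$ then expresses $\partial_n^2 p$ through $\partial_n\bm f$, lower-order terms, and the now-controlled $\partial_n^3 u_n$; and differentiating the tangential component once in $x_n$ expresses the last missing derivative $\partial_n^3 u_\tau$ through the mixed pressure derivative $\partial_\tau\partial_n p$ (already controlled), $\partial_n\bm f$, and lower-order terms. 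Collecting the interior and boundary-chart estimates over a finite cover and summing produces $\|\nabla^3\bm u\|_{L^2}+\|\nabla^2 p\|_{L^2}\le C(\|\bm f\|_{H^1}+\|\bm u\|_{H^2}+\|p\|_{H^1})$.

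The main obstacle I anticipate is the boundary bookkeeping rather than any single estimate: flattening perturbs both the Laplacian and the divergence constraint, so the tangentially differentiated system acquires commutator terms that must be shown to be genuinely lower order and absorbable into the right-hand side, and one must verify that every purely normal top-order derivative is indeed reconstructible from tangential data together with the two scalar equations and incompressibility. Once this recovery is organized as above, the estimate closes with no circularity, because $\nabla^2 p$ is expressed solely through tangential pressure derivatives and third derivatives of $\bm u$ that themselves carry a tangential derivative.
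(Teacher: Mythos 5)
The paper never proves Corollary \ref{C1}: it is quoted in Section 2 as a classical regularity estimate, justified only by the citations preceding Lemma \ref{wup} (e.g.\ \cite{Bian}, \cite{evans}, \cite{L-P-Z-2011}, \cite{SZ}), so there is no internal argument to compare yours against. Your proposal reconstructs the standard proof that sits behind such citations, and its architecture is correct: interior control by differentiating the system; tangential differentiation (or difference quotients) in flattened boundary charts, which preserves the homogeneous Dirichlet condition so that the $H^2\times H^1$ estimate of Lemma \ref{nodiv} applies to the differentiated system; and then algebraic recovery of the purely normal derivatives in the order $\partial_n^3 u_n$ (from $\partial_n^2$ of the divergence constraint), $\partial_n^2 p$ (from $\partial_n$ of the normal momentum component, using the just-controlled $\partial_n^3 u_n$), and $\partial_n^3 u_\tau$ (from $\partial_n$ of the tangential momentum component, using $\partial_\tau\partial_n p$ from the tangential step). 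In two dimensions there is a single tangential direction, and this ordering is, as you say, non-circular.

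Two points need sharpening before this is a complete proof. First, localization: once you multiply by a cutoff $\chi$, the field $\chi\,\partial_\tau\bm{u}$ is no longer divergence-free, since $\nabla\cdot(\chi\,\partial_\tau\bm{u})=\nabla\chi\cdot\partial_\tau\bm{u}+\chi\,\nabla\cdot\partial_\tau\bm{u}$, and the first term is not a "commutator from flattening" but a genuine inhomogeneous divergence. Lemma \ref{nodiv} as stated therefore does not apply verbatim to the localized system; you need either the Stokes estimate with prescribed divergence $\nabla\cdot\bm{u}=g$ (Cattabriga/ADN form), or a Bogovskii-type correction of the divergence, or to run the tangential difference quotients on the un-cut-off solution and localize only afterwards. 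This is standard but is more than absorbing lower-order terms. Second, constants: your forcing $\bm{F}_k=\partial_k\bm{f}+(\partial_k\mu)\Delta\bm{u}$ is estimated with a constant involving $\|\nabla\mu\|_{L^\infty}$, and this dependence is unavoidable --- any $H^3$ bound must see derivatives of the coefficient, precisely because differentiating the equation produces $(\partial_k\mu)\Delta\bm{u}$. So the corollary's claim that $C=C(C_{\min},C_{\max},\Omega)$ only is an imprecision of the paper (harmless in its application, where the terms involving $\nabla\mu(\theta)$ are shifted into $\bm{f}$), and your accounting is the honest one; do not try to remove that dependence. Finally, note there is a shorter route that avoids redoing the boundary analysis altogether: divide the momentum equation by $\mu$ and set $q=p/\mu$, so that $(\bm{u},q)$ solves the constant-coefficient Stokes system $-\Delta\bm{u}+\nabla q=\mu^{-1}\bm{f}-p\,\mu^{-2}\nabla\mu$ with $H^1$ right-hand side, quote the classical Cattabriga estimate once, and recover $\|p\|_{H^2}\le C\|\mu\|_{C^2}\|q\|_{H^2}$; this is closer in spirit to what the cited references do.
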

In order to establish the existence of weak solutions by Schauder fixed point theorem, it is necessary for us to propose the following existence theory as a preparation.
\begin{lemma}\label{fixed}
Suppose $\Omega\subset\R^2$ be a bounded domain with smooth boundary and consider the initial-boundary value problem
\begin{equation}\label{initialbo}
\left\{\begin{array}{ll}
\bm{u}_t+\bm{u}\cdot\nabla\bm{u}-\nabla\cdot(a(x)\nabla\bm{u})+\nabla p=-\nabla\cdot(\bm{v}\otimes\bm{v})\quad&\hbox{in}\,\,\Omega,\\
\bm{v}_t+\bm{u}\cdot\nabla\bm{v}-\nabla\cdot(b(x)\nabla\bm{v})=-\bm{v}\cdot\nabla\bm{u}+{\bm f}\quad&\hbox{in}\,\,\Omega,\\
\nabla\cdot\bm{u}=0\quad&\hbox{in}\,\,\Omega,
\end{array}\right.
\end{equation}
with Dirichlet boundary condition
\begin{equation}\label{initialbo2}
\bm{u}|_{\p \Omega}=0,\quad\bm{v}|_{\p \Omega}=0,
\end{equation}
where the physical meanings of $\bm{u},\bm{v},p$ are the same as those in \eqref{3D}, ${\bm f}$ is an external force term and $a(x), b(x)$ denote variable viscosity coefficients satisfying
$a(x),b(x)\geq\beta>0$. Assume $a(x),b(x)\in C^2(\bar{\Omega})$, the initial data $\bm{u}_0,\bm{v}_0\in C^2(\bar{\Omega})^2$ with $\nabla\cdot\bm{u}_0=0$ and $(\bm{u}_0,\bm{v}_0)|_{\p\Omega}=0$ and ${\bm f}\in C^2([0,T]\times\bar{\Omega})^2$, then there exists a unique smooth solution $(\bm{u},\bm{v},p)$
such that $(\bm{u},\bm{v})\in C^2([0,T]\times\bar{\Omega})^2,\,p\in C^1([0,T]\times\bar{\Omega})$.
\end{lemma}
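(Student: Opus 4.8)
The plan is to construct the solution by a linearization and fixed-point scheme, then upgrade regularity by Schauder-type estimates, and finally continue the local solution to the whole interval $[0,T]$ by a priori energy bounds. First I would freeze the nonlinearities: given a pair $(\bar{\bm{u}},\bar{\bm{v}})$ in a parabolic H\"older space $X_T=C^{1+\alpha/2,\,2+\alpha}([0,T]\times\bar{\Omega})^2$ with $\nabla\cdot\bar{\bm{u}}=0$ and matching boundary and initial data, I would define $(\bm{u},\bm{v},p)$ as the solution of the two decoupled linear problems
\begin{equation*}
\bm{u}_t-\nabla\cdot(a\nabla\bm{u})+\nabla p=-\bar{\bm{u}}\cdot\nabla\bar{\bm{u}}-\nabla\cdot(\bar{\bm{v}}\otimes\bar{\bm{v}}),\qquad \nabla\cdot\bm{u}=0,
\end{equation*}
\begin{equation*}
\bm{v}_t-\nabla\cdot(b\nabla\bm{v})=-\bar{\bm{u}}\cdot\nabla\bar{\bm{v}}-\bar{\bm{v}}\cdot\nabla\bar{\bm{u}}+\bm{f},
\end{equation*}
with the boundary conditions \eqref{initialbo2} and initial data $(\bm{u}_0,\bm{v}_0)$. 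Since $(\bar{\bm{u}},\bar{\bm{v}})\in X_T$ and $\bm{f}\in C^2$, the right-hand sides lie in $C^{\alpha/2,\alpha}$, so the non-stationary variable-coefficient Stokes system (handled through the elliptic regularity of Lemma \ref{wup} and the Stokes estimate \eqref{pL2}) and the linear parabolic system for $\bm{v}$ each admit a unique solution lying in $X_T$, with the pressure recovered up to its spatial mean. This defines the solution map $\mathcal{T}(\bar{\bm{u}},\bar{\bm{v}})=(\bm{u},\bm{v})$.

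Next I would show that $\mathcal{T}$ is a contraction on a closed ball of $X_T$ for $T$ small. Because every term on the right-hand side is at least quadratic in $(\bar{\bm{u}},\bar{\bm{v}})$ (apart from the fixed datum $\bm{f}$), the difference of two images is controlled by the difference of the inputs times a factor that vanishes as $T\to0$: the iterates all share the same initial data, so the governing H\"older norms pick up a positive power of $T$. The Banach fixed-point theorem then yields a unique local-in-time solution in $X_T$. Differentiating the equations in space and time and reapplying the Schauder and elliptic estimates bootstraps $(\bm{u},\bm{v})$ and $p$ to the regularity claimed in the statement, using that $a,b\in C^2(\bar{\Omega})$ and the data are $C^2$.

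To pass from short time to the full interval, I would establish a priori estimates for the genuine nonlinear system \eqref{initialbo}. Testing the $\bm{u}$-equation against $\bm{u}$ (so that the pressure and transport terms drop by $\nabla\cdot\bm{u}=0$) and the $\bm{v}$-equation against $\bm{v}$, and using $a,b\geq\beta>0$ together with the 2D Ladyzhenskaya/Gagliardo--Nirenberg inequality of Lemma \ref{P1}, gives $L^\infty_tL^2\cap L^2_tH^1$ control; successive $H^1$ and $H^2$ estimates, closed with the variable-coefficient Stokes regularity of Lemma \ref{nodiv} and Corollary \ref{C1}, then bound the relevant norms on any finite $[0,T]$ and rule out finite-time blow-up, so the continuation argument extends the local solution to all of $[0,T]$. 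Uniqueness follows by the same energy method applied to the difference of two solutions together with Gr\"onwall's inequality.

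The hard part, I expect, will be the coupled Stokes--parabolic regularity in the H\"older framework---in particular controlling the pressure $p$ and the variable-coefficient elliptic operator $\bm{u}\mapsto-\nabla\cdot(a\nabla\bm{u})$ up to the boundary, and matching boundary and initial data compatibly so that the time derivative is H\"older continuous up to $t=0$. Equally delicate is verifying that the a priori $H^2$ estimates close for the nonlinear couplings $\bm{v}\cdot\nabla\bm{u}$ and $\nabla\cdot(\bm{v}\otimes\bm{v})$ uniformly on $[0,T]$, which is precisely what guarantees that the existence time is not shorter than the prescribed $T$.
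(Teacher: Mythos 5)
Your strategy is sound, but it is genuinely different from what the paper actually does for this lemma. The paper gives no direct proof at all: it observes in the remark following Lemma \ref{fixed} that \eqref{initialbo} is a generalized 2D incompressible MHD system (take $\nabla\cdot\bm{v}=0$, $\bm{f}\equiv0$, $a,b$ constant, and view $\bm{v}$ as the magnetic field) and then defers to the existence theory of \cite{JIU} and \cite{KO}, arguing that the three discrepancies---non-solenoidal $\bm{v}$, variable coefficients, nonzero forcing---are harmless in view of the a priori estimates of Propositions \ref{weak1}, \ref{sita-H2} and \ref{uv-qiangH2}. Those cited constructions are Galerkin/energy-based, whereas you propose a self-contained linearization plus contraction in the parabolic H\"older class $C^{1+\alpha/2,2+\alpha}$ followed by energy continuation. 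Your route buys an explicit local construction and uniqueness directly from the fixed point, but note two costs that the paper's route avoids. First, the linear theory you invoke is not supplied by the paper's toolbox: Lemma \ref{wup} and \eqref{pL2} are \emph{stationary} elliptic and Stokes estimates, so solvability of the non-stationary variable-coefficient Stokes system in parabolic H\"older spaces must come from Solonnikov-type theory (and likewise parabolic Schauder estimates for the $\bm{v}$-equation), which you would need to cite explicitly. Second, as you yourself flag, classical regularity up to the corner $\{t=0\}\times\partial\Omega$ requires first-order compatibility conditions on $(\bm{u}_0,\bm{v}_0,\bm{f})$, which the hypotheses of the lemma do not guarantee; a Galerkin construction sidesteps this by producing strong solutions first and recovering smoothness for $t>0$ by bootstrapping, which is one reason the MHD literature the paper leans on takes that path. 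With those two points addressed (or with the fixed-point argument run in an energy space such as $C(0,T;H^2)\cap L^2(0,T;H^3)$ instead of H\"older classes), your argument closes; the energy and continuation steps you describe, including the cancellation between $\nabla\cdot(\bm{v}\otimes\bm{v})$ tested against $\bm{u}$ and $\bm{v}\cdot\nabla\bm{u}$ tested against $\bm{v}$, mirror the estimates the paper establishes elsewhere and are correct.
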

\begin{remark}
The proof of Lemma \ref{fixed} would be a minor modification to the existence theory of smooth solutions for 2D incompressible MHD system. In fact, \eqref{initialbo} can be thought as its generalized model. To be specific, by assuming $\nabla\cdot\bm{v}=0$, ${\bm f}\equiv0$ and $a(x),b(x)={\rm const.}$ in \eqref{initialbo}, we can view $\bm{v}$ as the magnetic field of 2D incompressible MHD system. So the main differences in the proof potentially lie in non incompressibility of $\bm{v}$, nonconstant of $a(x),b(x)$ and nonzero of ${\bm f}$. However, the a priori estimates established
in Proposition \ref{weak1}, \ref{sita-H2} and \ref{uv-qiangH2} assure us that the differences mentioned above will not destroy the existence theory of smooth solution and one can prove Lemma \ref{fixed} by almost same process as in \cite{JIU}, \cite{KO}. To avoid repetition, we omit it here.
\end{remark}

\section{GLOBAL WEAK SOLUTION}
\label{apriori}
\setcounter{equation}{0}
This section aims to establish the global weak solution. First of all, we establish the a priori estimates, which is crucial and necessary in proving Part \textbf{(a)} of Theorem \ref{T1}.
The main result of this section is stated in the following proposition.
\begin{proposition}\label{weak1}
Let $\Omega\subset\R^2$ be a bounded domain with smooth boundary and
$\theta_0\in L^\infty\cap H_0^1$, $(\bm{u}_0,\bm{v}_0)\in H_0^1$ with $\nabla\cdot\bm{u}_0=0$.
Suppose $(\bm{u},\bm{v},\theta)$ is the solution of system $\eqref{3D}-\eqref{eq20}$, then for any $t>0$, it follows that
\beno
\|\bm{u}(t)\|_{H^1}^2+\|\bm{v}(t)\|_{H^1}^2
+\|\theta(t)\|_{H^1}^2\leq Ce^{-\alpha t},
\eeno
and
\beno
\int_0^te^{\alpha\tau}(\|\bm{u}\|_{H^2}^2+\|\bm{v}\|_{H^2}^2+\|\theta\|_{H^2}^2){\rm d}\tau\leq C,
\eeno
where $C$ and $\alpha$ are the constants relying only on $C_0,\Omega,\|\theta_0\|_{L^\infty},\|\theta_0\|_{H^1},
\|\bm{u}_0\|_{H^1}$ and $\|\bm{v}_0\|_{H^1}.$
\end{proposition}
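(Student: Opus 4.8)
The plan is to establish a hierarchy of a priori estimates in which each bound feeds the next, carrying an exponential weight $e^{\alpha t}$ throughout so that Poincar\'e's inequality --- valid here because $\bm{u},\bm{v},\theta$ all vanish on $\partial\Omega$ --- converts dissipation into decay; the estimates are run on the smooth solutions furnished by Lemma \ref{fixed} and then passed to the limit. First I would derive the basic $L^2$ balance: testing the evolution equations of \eqref{3D} by $\bm{u}$, $\bm{v}$ and $\theta$, incompressibility $\nabla\cdot\bm{u}=0$ kills the transport and pressure terms and the couplings cancel in pairs --- after integrating by parts (legitimate since $\theta|_{\partial\Omega}=0$), the term $-\int\nabla\theta\cdot\bm{v}$ from the $\bm{v}$-equation cancels $-\int(\nabla\cdot\bm{v})\theta$ from the $\theta$-equation, while $\int(\bm{v}\otimes\bm{v}):\nabla\bm{u}$ cancels $-\int(\bm{v}\cdot\nabla)\bm{u}\cdot\bm{v}$. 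With $\mu,\nu,\kappa\ge\sigma^{-1}$ and Poincar\'e this gives $\frac{d}{dt}E+\alpha_0 E\le 0$ for $E=\|\bm{u}\|_{L^2}^2+\|\bm{v}\|_{L^2}^2+\|\theta\|_{L^2}^2$, hence $E(t)\le E(0)e^{-\alpha_0 t}$ and, on multiplying by $e^{\alpha t}$ with $\alpha<\alpha_0$, the weighted dissipation bound $\int_0^t e^{\alpha\tau}(\|\nabla\bm{u}\|_{L^2}^2+\|\nabla\bm{v}\|_{L^2}^2+\|\nabla\theta\|_{L^2}^2)\,d\tau\le C$.

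Next I would climb to $L^\infty$ and then $H^1$, following the roadmap of the introduction. Testing the $\bm{v}$-equation by $|\bm{v}|^{r-2}\bm{v}$ and absorbing the contributions of $\nabla\theta$ and $(\bm{v}\cdot\nabla)\bm{u}$ through Gagliardo--Nirenberg (Lemma \ref{P1}) together with the gradient bounds of the previous step yields uniform-in-time $L^r$ bounds on $\bm{v}$ for every finite $r$. Inserting these into a De Giorgi--Nash--Moser iteration for the temperature equation, run on the truncations $(\theta-k)_+$ and closed by the recurrence \eqref{yasuo1}--\eqref{yasuo2} of Lemma \ref{giorgi}, upgrades the merely $r$-dependent $L^r$ control of $\theta$ to $\|\theta\|_{L^\infty_tL^\infty}\le C$. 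With $\theta$ now bounded I would pass to the good unknown \eqref{bigseta}, whose equation $\frac{1}{\kappa(\theta)}(\hat{\Theta}_t+\bm{u}\cdot\nabla\hat{\Theta})-\Delta\hat{\Theta}=-\nabla\cdot\bm{v}$ carries a clean Laplacian; testing it by $\hat{\Theta}_t$ (for which $\hat{\Theta}|_{\partial\Omega}=0$ forces $\hat{\Theta}_t|_{\partial\Omega}=0$ and removes the boundary term) produces $\frac12\frac{d}{dt}\|\nabla\hat{\Theta}\|_{L^2}^2$ and controls $\|\theta\|_{H^1}$, while the elliptic estimate of Remark \ref{tidu} applied through the equation converts this into the weighted $\int_0^t e^{\alpha\tau}\|\theta\|_{H^2}^2\,d\tau$.

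For the velocities the chief bounded-domain difficulty is the pressure, which does not vanish under a plain $-\Delta\bm{u}$ test because of the resulting boundary term. I would instead test the momentum equation by $\bm{u}_t$, eliminating the pressure via $\nabla\cdot\bm{u}_t=0$, and the $\bm{v}$-equation by $-\Delta\bm{v}$ as a clean parabolic estimate; the term $\frac12\frac{d}{dt}\int\mu(\theta)|\nabla\bm{u}|^2$ generates a $\theta_t$-contribution absorbed using $\|\theta\|_{L^\infty}\le C$ together with an $L^2$-bound on $\theta_t$ read off from the temperature equation. Rewriting the momentum equation in the non-divergence Stokes form $-\mu(\theta)\Delta\bm{u}+\nabla p=\bm{F}$, with $\bm{F}$ collecting $-\bm{u}_t$, the transport term, $\nabla\mu(\theta)\cdot\nabla\bm{u}$ and $-\nabla\cdot(\bm{v}\otimes\bm{v})$, Lemma \ref{nodiv} upgrades the control to $\|\bm{u}\|_{H^2}$, and similarly for $\bm{v}$. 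The coupling and transport nonlinearities are closed throughout by the $L^\infty$ bound on $\theta$, the $L^r$ bounds on $\bm{v}$ and Lemma \ref{P1}; multiplying each differential inequality by $e^{\alpha t}$ and applying Poincar\'e to the dissipation then delivers the asserted $H^1$ decay and the weighted $L^2_tH^2$ integral.

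The hardest part will be the passage from the $r$-indexed family of $L^r$ norms to a genuine $L^\infty$ bound on $\theta$. One must verify that the space-time $L^r$ control of $\bm{v}$ enters the recurrence \eqref{yasuo1} with a power of the level $k$, and a scaling in the measure of the super-level sets, strong enough that the smallness condition \eqref{yasuo2} is met after finitely many steps --- precisely what fails if one tries to send $r\to\infty$ directly. A secondary difficulty, absent from the whole-space analysis \cite{DLXZ}, is that derivatives cannot be integrated by parts freely near $\partial\Omega$, so every second-order bound must be routed through the elliptic and Stokes regularity of Lemmas \ref{wup}, \ref{nodiv} and Corollary \ref{C1} rather than through direct energy identities; keeping the weight $e^{\alpha t}$ intact through each Young and Gagliardo--Nirenberg splitting is the remaining bookkeeping burden.
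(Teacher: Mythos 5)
Your proposal is correct and follows the paper's own proof essentially step for step: basic $L^2$ energy with Poincar\'e giving exponential decay, $|\bm{v}|^{r-2}\bm{v}$ testing for the $L^r$ bounds, De Giorgi--Nash--Moser via Lemma \ref{giorgi} for $\|\theta\|_{L^\infty}$, the good unknown $\hat{\Theta}$ tested by $\hat{\Theta}_t$ combined with elliptic regularity for the temperature's $H^1$/$H^2$ control, and finally time-derivative testing plus Stokes regularity (Lemma \ref{nodiv}, Lemma \ref{wup}) for the velocities, all carried with the weight $e^{\alpha t}$. Your one deviation --- testing the $\bm{v}$-equation by $-\Delta\bm{v}$ instead of $\bm{v}_t$ as in Proposition \ref{uv-H2} --- is an equivalent standard variant, and your explicit treatment of the commutator term $\int\mu'(\theta)\theta_t|\nabla\bm{u}|^2$ arising when the time derivative is pulled out of $\int\mu(\theta)\nabla\bm{u}:\nabla\bm{u}_t$ is, if anything, more careful than the paper's presentation of \eqref{uvH21}.
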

The proof of Proposition \ref{weak1} can be divided into the following steps.
\subsection{$L^2$ estimates}
\begin{proposition}\label{weak2}
Let $\Omega\subset\R^2$ be a bounded domain with smooth boundary and $(\bm{u},\bm{v},\theta)$ is the solution of system $\eqref{3D}-\eqref{eq20}$. Assuming that $(\bm{u}_0,\bm{v}_0,\theta_0)\in L^2$ , then for any $t>0$, we obtain
\beno
\|\bm{u}(t)\|_{L^2}^2+\|\bm{v}(t)\|_{L^2}^2+\|\theta(t)\|_{L^2}^2
\leq e^{-2\alpha t}(\|\bm{u}_0\|_{L^2}^2+\|\bm{v}_0\|_{L^2}^2+\|\theta_0\|_{L^2}^2)
\eeno
and
\beno
\int_0^te^{\alpha\tau}(\|\nabla\bm{u}\|_{L^2}^2+\|\nabla\bm{v}\|_{L^2}^2
+\|\nabla\theta\|_{L^2}^2){\rm d}\tau
\leq\sigma(\|\bm{u}_0\|_{L^2}^2+\|\bm{v}_0\|_{L^2}^2+\|\theta_0\|_{L^2}^2),
\eeno
with $\alpha=(C^{\ast}\sigma)^{-1}$, where the constant $C^{\ast}$ depends only on the domain $\Omega$.
\end{proposition}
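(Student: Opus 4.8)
The plan is to run the basic $L^2$ energy method directly on the three evolution equations, exploiting the antisymmetric coupling structure of \eqref{3D} to eliminate every cross term. Concretely, I would test the first equation with $\bm{u}$, the second with $\bm{v}$ and the third with $\theta$, integrate over $\Omega$ and add. Since $\nabla\cdot\bm{u}=0$ and $(\bm{u},\bm{v},\theta)$ vanish on $\partial\Omega$, integration by parts makes the transport terms $\int_\Omega(\bm{u}\cdot\nabla)\bm{u}\cdot\bm{u}$, $\int_\Omega(\bm{u}\cdot\nabla)\bm{v}\cdot\bm{v}$ and $\int_\Omega(\bm{u}\cdot\nabla)\theta\,\theta$ all vanish (each reduces to a boundary flux $\tfrac12\int_{\partial\Omega}(\bm{u}\cdot\bm{n})|\cdot|^2$ that is killed by the Dirichlet data), and the pressure contributes $\int_\Omega\nabla p\cdot\bm{u}=-\int_\Omega p\,\nabla\cdot\bm{u}=0$. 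The same boundary condition turns each diffusion term into a nonnegative production, e.g. $-\int_\Omega\nabla\cdot(\mu(\theta)\nabla\bm{u})\cdot\bm{u}=\int_\Omega\mu(\theta)|\nabla\bm{u}|^2$.

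The crux is the cancellation of the two pairs of coupling terms, which is precisely the feature of the tropical climate model that keeps the energy balance closed. With summation over repeated indices, the forcing $-\nabla\cdot(\bm{v}\otimes\bm{v})$ in the $\bm{u}$-equation contributes $\int_\Omega(\bm{v}\otimes\bm{v}):\nabla\bm{u}=\int_\Omega v_iv_j\,\partial_ju_i$, while $-(\bm{v}\cdot\nabla)\bm{u}$ in the $\bm{v}$-equation contributes $-\int_\Omega v_j(\partial_ju_i)v_i$; these are equal and opposite and cancel. Likewise, $-\nabla\theta$ in the $\bm{v}$-equation gives $-\int_\Omega\nabla\theta\cdot\bm{v}$, whereas $-\nabla\cdot\bm{v}$ in the $\theta$-equation gives $-\int_\Omega(\nabla\cdot\bm{v})\theta=\int_\Omega\bm{v}\cdot\nabla\theta$ after an integration by parts (again using $\theta|_{\partial\Omega}=0$), and the two cancel. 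Verifying these cancellations carefully, together with the vanishing of every boundary integral, is the only genuinely structural point; everything else is routine.

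What remains is a standard differential inequality. Writing $E(t)=\|\bm{u}\|_{L^2}^2+\|\bm{v}\|_{L^2}^2+\|\theta\|_{L^2}^2$ and $D(t)=\|\nabla\bm{u}\|_{L^2}^2+\|\nabla\bm{v}\|_{L^2}^2+\|\nabla\theta\|_{L^2}^2$, the above yields, using the uniform lower bounds $\mu(\theta),\nu(\theta),\kappa(\theta)\geq\sigma^{-1}$,
\[
\frac{1}{2}\frac{d}{dt}E+\sigma^{-1}D\leq0.
\]
Since $\bm{u},\bm{v},\theta\in H_0^1$, the Poincar\'e inequality gives $D\geq (C^{\ast})^{-1}E$ with $C^{\ast}=C^{\ast}(\Omega)$; setting $\alpha=(C^{\ast}\sigma)^{-1}$ so that $\sigma^{-1}D\geq\alpha E$, Gr\"onwall's inequality produces the exponential decay $E(t)\leq e^{-2\alpha t}E(0)$, which is the first claimed bound.

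For the weighted integral estimate I would multiply the energy inequality by $e^{\alpha t}$, use the product rule $\frac{d}{dt}(e^{\alpha t}E)=e^{\alpha t}(E'+\alpha E)$, and split the dissipation as $\sigma^{-1}D=\tfrac12\sigma^{-1}D+\tfrac12\sigma^{-1}D\geq\tfrac12\sigma^{-1}D+\tfrac{\alpha}{2}E$, absorbing the $\tfrac{\alpha}{2}E$ against the derivative term to reach
\[
\frac{d}{dt}\bigl(e^{\alpha t}E\bigr)+\sigma^{-1}e^{\alpha t}D\leq0.
\]
Integrating from $0$ to $t$ and discarding the nonnegative quantity $e^{\alpha t}E(t)$ gives $\int_0^te^{\alpha\tau}D\,{\rm d}\tau\leq\sigma\,E(0)$, exactly the second estimate. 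As these are a priori bounds, the manipulations are carried out on the smooth approximate solutions furnished by Lemma \ref{fixed} and then passed to the limit.
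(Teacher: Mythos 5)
Your proposal is correct and follows essentially the same argument as the paper: test with $(\bm{u},\bm{v},\theta)$, exploit the antisymmetric cancellation of the coupling terms and the vanishing of transport, pressure and boundary contributions, bound the diffusion from below by $\sigma^{-1}$, apply Poincar\'e to get $\alpha=(C^{\ast}\sigma)^{-1}$, and conclude by Gr\"onwall and multiplication by $e^{\alpha t}$. The only (immaterial) difference is in the last step: you absorb the $\alpha E$ term using half of the dissipation via Poincar\'e to get $\frac{{\rm d}}{{\rm d}t}\bigl(e^{\alpha t}E\bigr)+\sigma^{-1}e^{\alpha t}D\leq0$ directly, whereas the paper bounds that term by $\alpha e^{-\alpha t}E(0)$ using the already-established decay and then integrates; both yield the same constant $\sigma E(0)$.
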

\begin{proof}
Taking the inner product of \eqref{3D} with $(\bm{u},\bm{v},\theta)$ respectively and integrating by parts, we can obtain
\ben\label{uvL2}
&&\f12\f{{\rm d}}{{\rm d}t}(\|\bm{u}(t)\|_{L^2}^2+\|\bm{v}(t)\|_{L^2}^2+\|\theta(t)\|_{L^2}^2)
-\int_{\Omega}\nabla\cdot(\mu(\theta)\nabla\bm{u})\cdot\bm{u}{\rm d}\bm{x}\notag\\
&&-\int_{\Omega}\nabla\cdot(\nu(\theta)\nabla\bm{v})\cdot\bm{v}{\rm d}\bm{x}
-\int_{\Omega}\nabla\cdot(\kappa(\theta)\nabla\theta)\theta{\rm d}\bm{x}
=0.
\een
By direct calculations, it follows that
\beno
&&-\int_\Omega\nabla\cdot(\mu(\theta)\nabla\bm{u})\cdot\bm{u}{\rm d}\bm{x}
=-\int_\Omega\partial_i(\mu(\theta)\partial_iu_j)u_j{\rm d}\bm{x}\\
&=&-\int_{\partial\Omega}\mu(\theta)\partial_iu_ju_jn_i{\rm d}s
+\int_\Omega\mu(\theta)\partial_iu_j\partial_iu_j{\rm d}\bm{x}
\geq\sigma^{-1}\|\nabla\bm{u}\|_{L^2}^2.
\eeno
Identically,
\beno
-\int_\Omega\nabla\cdot(\nu(\theta)\nabla\bm{v})\cdot\bm{v}{\rm d}\bm{x}\geq\sigma^{-1}\|\nabla\bm{v}\|_{L^2}^2,\,
-\int_\Omega\nabla\cdot(\kappa(\theta)\nabla\theta)\theta{\rm d}\bm{x}\geq\sigma^{-1}\|\nabla\theta\|_{L^2}^2.
\eeno
Thus,
\ben\label{decay1}
\f12\f{{\rm d}}{{\rm d}t}(\|\bm{u}(t)\|_{L^2}^2+\|\bm{v}(t)\|_{L^2}^2+\|\theta(t)\|_{L^2}^2)
+\sigma^{-1}(\|\nabla\bm{u}\|_{L^2}^2+\|\nabla\bm{v}\|_{L^2}^2+\|\nabla\theta\|_{L^2}^2)
\leq0.
\een
Applying the Gronwall inequality, it yields that
\ben\label{gronwall1}
\|\bm{u}(t)\|_{L^2}^2+\|\bm{v}(t)\|_{L^2}^2+\|\theta(t)\|_{L^2}^2
+\int_0^t(\|\nabla\bm{u}\|_{L^2}^2+\|\nabla\bm{v}\|_{L^2}^2+
\|\nabla\theta\|_{L^2}^2)(\tau){\rm d}\tau
\leq C(\bm{u}_0,\bm{v}_0,\theta_0).
\een
Owing to the boundary conditions $\bm{u}|_{\partial\Omega}=\bm{v}|_{\partial\Omega}=\theta|_{\partial\Omega}=0,$ we can harness Poincar\'{e} inequality to get
$$\|\bm{u}\|_{L^2}^2\leq C^{\ast}\|\nabla\bm{u}\|_{L^2}^2,\,\,\|\bm{v}\|_{L^2}^2\leq C^{\ast}\|\nabla\bm{v}\|_{L^2}^2,\,\,\|\theta\|_{L^2}^2\leq C^{\ast}\|\nabla\theta\|_{L^2}^2,$$
where the constant $C^{\ast}$ relies on the domain $\Omega$ only.
\vskip .1in
Hence, \eqref{decay1} can be rewritten as
\beno
\f{{\rm d}}{{\rm d}t}(\|\bm{u}(t)\|_{L^2}^2+\|\bm{v}(t)\|_{L^2}^2+\|\theta(t)\|_{L^2}^2)
+\frac{2}{C^{\ast}\sigma}(\|\bm{u}(t)\|_{L^2}^2+\|\bm{v}(t)\|_{L^2}^2+\|\theta(t)\|_{L^2}^2)
\leq0,
\eeno
which derives, after applying the Gronwall inequality, that
\ben\label{gronwall2}
\|\bm{u}(t)\|_{L^2}^2+\|\bm{v}(t)\|_{L^2}^2+\|\theta(t)\|_{L^2}^2
\leq e^{-2\alpha t}(\|\bm{u}_0\|_{L^2}^2+\|\bm{v}_0\|_{L^2}^2+\|\theta_0\|_{L^2}^2),\,\,\forall\,t>0,
\een
with $\alpha=(C^{\ast}\sigma)^{-1}.$
\vskip .1in
Multiplying \eqref{decay1} by $e^{\alpha t}$ and making use of \eqref{gronwall2}, we obtain
\beno
&&\f{{\rm d}}{{\rm d}t}[e^{\alpha t}(\|\bm{u}\|_{L^2}^2+\|\bm{v}\|_{L^2}^2+\|\theta\|_{L^2}^2)]
+\frac{2}{\sigma}e^{\alpha t}(\|\nabla\bm{u}\|_{L^2}^2+\|\nabla\bm{v}\|_{L^2}^2+\|\nabla\theta\|_{L^2}^2)\\
&\leq&\alpha e^{-\alpha t}(\|\bm{u}_0\|_{L^2}^2+\|\bm{v}_0\|_{L^2}^2+\|\theta_0\|_{L^2}^2).
\eeno
After integration over $[0,t],$ we derive
\beno
\int_0^te^{\alpha\tau}(\|\nabla\bm{u}\|_{L^2}^2+\|\nabla\bm{v}\|_{L^2}^2
+\|\nabla\theta\|_{L^2}^2){\rm d}\tau
\leq\sigma(\|\bm{u}_0\|_{L^2}^2+\|\bm{v}_0\|_{L^2}^2+\|\theta_0\|_{L^2}^2),\,\,\forall\,t>0.
\eeno
The proof of Proposition \ref{weak2} is completed here.
\end{proof}

\subsection{$L^r$ estimates}
\begin{proposition}\label{vLr}
Assuming that $(\bm{u}_0,\bm{v}_0,\theta_0)$ satisfies the conditions in Proposition \ref{weak1}, then for $r\in(2,\infty)$, for any $t>0$, it holds that
\ben\label{vLrest}
\|\bm{v}(t)\|_{L^r}+\|\theta(t)\|_{L^r}\leq C(r,\bm{u}_0,\bm{v}_0,\theta_0).
\een
\end{proposition}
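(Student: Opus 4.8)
The plan is to derive the two bounds in succession---first for $\bm{v}$, then for $\theta$---by testing the respective equations with $|\bm{v}|^{r-2}\bm{v}$ and $|\theta|^{r-2}\theta$, and to close each estimate through a Gronwall argument whose multiplicative coefficient is \emph{integrable in time}. The decisive resource is Proposition \ref{weak2}, which (dropping $e^{\alpha\tau}\ge1$) gives $\int_0^\infty(\|\nabla\bm{u}\|_{L^2}^2+\|\nabla\bm{v}\|_{L^2}^2+\|\nabla\theta\|_{L^2}^2)\,{\rm d}\tau\le C$; hence any contribution of the form $g(\tau)\|\bm{v}\|_{L^r}^r$ with $g\in\{\|\nabla\bm{u}\|_{L^2}^2,\|\nabla\theta\|_{L^2}^2\}$ is harmless in Gronwall, whereas a \emph{constant} coefficient would only yield exponential growth and destroy uniformity in $t$. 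Throughout I write $w=|\bm{v}|^{r/2}$ (resp.\ $|\theta|^{r/2}$); since $\bm{v},\theta$ vanish on $\partial\Omega$ so does $w$, so Poincar\'e gives $\|\bm{v}\|_{L^r}^r=\|w\|_{L^2}^2\le C^{\ast}\|\nabla w\|_{L^2}^2$, and the lower bounds $\mu,\nu,\kappa\ge\sigma^{-1}$ make the diffusion term control $c_r\|\nabla w\|_{L^2}^2\ge\delta\|\bm{v}\|_{L^r}^r$, a coercive quantity that absorbs all sublinear right-hand sides.

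For $\bm{v}$, testing with $|\bm{v}|^{r-2}\bm{v}$ kills the convection term by $\nabla\cdot\bm{u}=0$ and $\bm{u}|_{\partial\Omega}=0$, leaving $\frac1r\frac{{\rm d}}{{\rm d}t}\|\bm{v}\|_{L^r}^r+c_r\|\nabla w\|_{L^2}^2\le|(A)|+|(B)|$ with $(A)=-\int\nabla\theta\cdot|\bm{v}|^{r-2}\bm{v}$ and $(B)=-\int(\bm{v}\cdot\nabla)\bm{u}\cdot|\bm{v}|^{r-2}\bm{v}$. The term $(B)$ is benign: $|(B)|\le\|\nabla\bm{u}\|_{L^2}\|w\|_{L^4}^2$, and the 2D Ladyzhenskaya/Gagliardo--Nirenberg inequality (Lemma \ref{P1}) gives $\|w\|_{L^4}^2\le C\|w\|_{L^2}\|\nabla w\|_{L^2}$, so Young produces $|(B)|\le\epsilon\|\nabla w\|_{L^2}^2+C\|\nabla\bm{u}\|_{L^2}^2\|\bm{v}\|_{L^r}^r$, the desired integrable-coefficient form. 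The genuine obstacle is $(A)$, the $-\nabla\theta$ coupling, since at this stage $\theta$ is controlled only through the energy bounds of Proposition \ref{weak2}. I would write $|(A)|\le\|\nabla\theta\|_{L^2}\|\bm{v}\|_{L^{2(r-1)}}^{r-1}$ and expand $\|\bm{v}\|_{L^{2(r-1)}}^{r-1}=\|w\|_{L^{4(r-1)/r}}^{2(r-1)/r}$ by Gagliardo--Nirenberg, giving a leading piece $C\|\nabla\theta\|_{L^2}\|\nabla w\|_{L^2}^{(r-2)/r}\|\bm{v}\|_{L^r}^{r/2}$ and a lower-order piece $C\|\nabla\theta\|_{L^2}\|\bm{v}\|_{L^r}^{r-1}$. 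The crux is a two-stage Young inequality on the leading piece: first absorb $\|\nabla w\|_{L^2}^{(r-2)/r}$ into $\epsilon\|\nabla w\|_{L^2}^2$ using the conjugate exponent $\tfrac{2r}{r+2}$, which produces exactly $\big(\|\nabla\theta\|_{L^2}^2\|\bm{v}\|_{L^r}^r\big)^{r/(r+2)}$, and then a second Young yields $C\|\nabla\theta\|_{L^2}^2\|\bm{v}\|_{L^r}^r+C$. Thus the coupling coefficient is $\|\nabla\theta\|_{L^2}^2\in L^1(0,\infty)$ rather than a constant---this is the heart of the matter. The lower-order piece is split as $\|\nabla\theta\|_{L^2}\|\bm{v}\|_{L^r}^{r-1}\le\tfrac12\|\nabla\theta\|_{L^2}^2\|\bm{v}\|_{L^r}^r+\tfrac12\|\bm{v}\|_{L^r}^{r-2}$, whose last, sublinear term is absorbed by the coercive $\delta\|\bm{v}\|_{L^r}^r$ via $\|\bm{v}\|_{L^r}^{r-2}\le\eta\|\bm{v}\|_{L^r}^r+C_\eta$. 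Collecting, with $X=\|\bm{v}\|_{L^r}^r$ one reaches $\frac{{\rm d}}{{\rm d}t}X+\tfrac{\delta}{2}X\le g(\tau)X+C$ with $g=C(\|\nabla\bm{u}\|_{L^2}^2+\|\nabla\theta\|_{L^2}^2)\in L^1(0,\infty)$; since $X(0)=\|\bm{v}_0\|_{L^r}^r<\infty$ (as $\bm{v}_0\in H_0^1\hookrightarrow L^r$ in 2D), Gronwall gives the uniform bound $\|\bm{v}(t)\|_{L^r}\le C(r,\bm{u}_0,\bm{v}_0,\theta_0)$.

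With $\bm{v}\in L^\infty(0,\infty;L^r)$ secured, the $\theta$-bound is easier. Testing the temperature equation with $|\theta|^{r-2}\theta$, the convection again vanishes and the diffusion contributes $c_r\|\nabla(|\theta|^{r/2})\|_{L^2}^2$; the only source $(r-1)\int\bm{v}\cdot|\theta|^{r-2}\nabla\theta$ is bounded, by Cauchy--Schwarz and Young, by $\epsilon\|\nabla(|\theta|^{r/2})\|_{L^2}^2+C\int|\bm{v}|^2|\theta|^{r-2}$. Here I would simply use H\"older together with the bound just obtained on $\bm{v}$: $\int|\bm{v}|^2|\theta|^{r-2}\le\|\bm{v}\|_{L^r}^2\|\theta\|_{L^r}^{r-2}\le C\|\theta\|_{L^r}^{r-2}$, a \emph{sublinear} power of $\|\theta\|_{L^r}^r$. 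After absorbing the diffusion remainder and using the coercive $\delta'\|\theta\|_{L^r}^r$ to dominate this sublinear right-hand side (again by Young), one gets $\frac{{\rm d}}{{\rm d}t}\|\theta\|_{L^r}^r+\tfrac{\delta'}{2}\|\theta\|_{L^r}^r\le C$, and since $\|\theta_0\|_{L^r}\le|\Omega|^{1/r}\|\theta_0\|_{L^\infty}$ is finite this yields the uniform bound on $\|\theta(t)\|_{L^r}$. All manipulations with $w=|\bm{v}|^{r/2}$ are formal a priori estimates, to be justified on the smooth approximate solutions furnished by Lemma \ref{fixed} and then passed to the limit. The one real difficulty is the $-\nabla\theta$ term $(A)$: extracting the integrable coefficient $\|\nabla\theta\|_{L^2}^2$---instead of a constant, or the non-integrable $\|\nabla\theta\|_{L^2}^r$ that a careless splitting would force---is precisely what makes the bound uniform in $t$ as the proposition requires; note that the constant necessarily degenerates as $r\to\infty$, which is why the passage to $L^\infty$ for $\theta$ must be handled separately by the De Giorgi iteration of Lemma \ref{giorgi}.
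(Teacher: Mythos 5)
Your proposal is correct, and its skeleton --- testing with $|\bm{v}|^{r-2}\bm{v}$ and $|\theta|^{r-2}\theta$, absorbing the diffusion, and closing a Gronwall argument whose coefficients are integrable in time thanks to Proposition \ref{weak2} --- is the same as the paper's; the treatment of the term $(\bm{v}\cdot\nabla)\bm{u}$ is essentially identical. The genuine difference lies in the coupling term $-\int_\Omega\nabla\theta\cdot|\bm{v}|^{r-2}\bm{v}\,{\rm d}\bm{x}$. The paper integrates by parts (using the vanishing boundary values) to move the derivative onto $|\bm{v}|^{r-2}\bm{v}$, bounds the result by $\tfrac{1}{2\sigma}\int_\Omega|\bm{v}|^{r-2}|\nabla\bm{v}|^2\,{\rm d}\bm{x}+C(r)\|\theta\|_{L^r}^2\|\bm{v}\|_{L^r}^{r-2}$, invokes the 2D embedding $\|\theta\|_{L^r}\le C\|\theta\|_{H^1}$, divides by $\|\bm{v}\|_{L^r}^{r-2}$, and applies Gronwall to $\tfrac{{\rm d}}{{\rm d}t}\|\bm{v}\|_{L^r}^2\le C(r)\|\nabla\bm{u}\|_{L^2}^2\|\bm{v}\|_{L^r}^2+C(r)\|\theta\|_{H^1}^2$, whose inhomogeneity is integrable in time; no coercivity of the diffusion is ever needed. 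You instead keep $\nabla\theta$, estimate $\|\bm{v}\|_{L^{2(r-1)}}^{r-1}$ by Gagliardo--Nirenberg, and run a two-stage Young inequality to extract the integrable coefficient $\|\nabla\theta\|_{L^2}^2$, at the price of additive constants that you then dominate via the Poincar\'e coercivity $\|\nabla|\bm{v}|^{r/2}\|_{L^2}^2\gtrsim\|\bm{v}\|_{L^r}^r$ of the zero-trace diffusion term. Both routes are valid: the paper's is shorter and purely Gronwall-based, while yours is more laborious but makes explicit why an integrable coefficient (rather than a constant) is indispensable, and produces a damped inequality $\tfrac{{\rm d}}{{\rm d}t}X+\delta X\le g(\tau)X+C$ that carries extra structural information. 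Your $\theta$-estimate (integration by parts, H\"older against the freshly obtained $L^\infty_tL^r$ bound on $\bm{v}$, then coercivity) is a correct instance of the ``similar method'' that the paper leaves to the reader, though there one can also avoid coercivity by using $\|\bm{v}\|_{L^r}\le C\|\bm{v}\|_{H^1}$ and the time-integrability of $\|\bm{v}\|_{H^1}^2$.
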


\begin{proof}
Multiplying both sides of the second equation in $\eqref{3D}$ by $|\bm{v}|^{r-2}\bm{v}$ and integrating over $\Omega$, we get
\ben\label{vLr-1}
\frac{1}{r}\frac{{\rm d}}{{\rm d}t}\|\bm{v}(t)\|_{L^r}^r-\int_\Omega|\bm{v}|^{r-2}\nabla\cdot(\nu(\theta)\nabla\bm{v})\cdot\bm{v}{\rm d}\bm{x}
=-\int_\Omega|\bm{v}|^{r-2}\bm{v}\cdot(\bm{v}\cdot\nabla)\bm{u}{\rm d}\bm{x}
-\int_\Omega|\bm{v}|^{r-2}\bm{v}\cdot\nabla\theta {\rm d}\bm{x}.
\een
By means of direct calculations, we can derive
\ben\label{vLr-2}
&&-\int_\Omega|\bm{v}|^{r-2}\nabla\cdot(\nu(\theta)\nabla\bm{v})\cdot\bm{v}{\rm d}\bm{x}\notag\\
&=&-\int_{\partial\Omega}|\bm{v}|^{r-2}\nu(\theta)\partial_iv_jv_jn_i{\rm d}s
+\int_\Omega\nu(\theta)\partial_iv_j\partial_i(|\bm{v}|^{r-2}v_j){\rm d}\bm{x}\notag\\
&=&\int_\Omega\nu(\theta)\partial_iv_j\partial_iv_j|\bm{v}|^{r-2}{\rm d}\bm{x}
+(r-2)\int_\Omega\nu(\theta)\partial_iv_j|v|^{r-3}v_j\partial_i|\bm{v}|{\rm d}\bm{x}\notag\\
&=&\int_\Omega\nu(\theta)|\bm{v}|^{r-2}|\nabla\bm{v}|^2{\rm d}\bm{x}
+\frac{4(r-2)}{r^2}\int_\Omega\nu(\theta)(\partial_i|\bm{v}|^{\frac{r}{2}})(\partial_i|\bm{v}|^{\frac{r}{2}}){\rm d}\bm{x}\notag\\
&\geq&\frac{1}{\sigma}\int_\Omega|\bm{v}|^{r-2}|\nabla\bm{v}|^2{\rm d}\bm{x}
+\frac{4(r-2)}{r^2\sigma}\|\nabla|\bm{v}|^{\frac{r}{2}}\|_{L^2}^2.
\een
Taking advantages of H\"{o}lder inequality and Lemma \ref{P1}, it yields that
\ben\label{vLr-3}
&&|-\int_\Omega|\bm{v}|^{r-2}\bm{v}\cdot(\bm{v}\cdot\nabla)\bm{u}{\rm d}\bm{x}|
\leq C\|\nabla\bm{u}\|_{L^2}\||\bm{v}|^{\frac{r}{2}}\|_{L^4}^2\notag\\
&\leq& C\|\nabla\bm{u}\|_{L^2}\||\bm{v}|^{\frac{r}{2}}\|_{L^2}\|\nabla|\bm{v}|^{\frac{r}{2}}\|_{L^2}\notag\\
&\leq&\frac{r-2}{r^2\sigma}\|\nabla|\bm{v}|^{\frac{r}{2}}\|_{L^2}^2+C(r)\|\nabla\bm{u}\|_{L^2}^2\||\bm{v}|^{\frac{r}{2}}\|_{L^2}^2\notag\\
&=&\frac{r-2}{r^2\sigma}\|\nabla|\bm{v}|^{\frac{r}{2}}\|_{L^2}^2+C(r)\|\nabla\bm{u}\|_{L^2}^2\|\bm{v}\|_{L^r}^r.
\een
Applying the Sobolev embedding inequality results in
\ben\label{vLr-4}
&&-\int_\Omega\nabla\theta\cdot(|\bm{v}|^{r-2}\bm{v}){\rm d}\bm{x}
\leq C(r)\int_\Omega|\theta||\bm{v}|^{r-2}|\nabla\bm{v}|{\rm d}\bm{x}\notag\\
&\leq&\frac{1}{2\sigma}\int_\Omega|\bm{v}|^{r-2}|\nabla\bm{v}|^2{\rm d}\bm{x}
+C(r)\|\theta\|_{L^r}^2\|\bm{v}\|_{L^r}^{r-2}\notag\\
&\leq&\frac{1}{2\sigma}\int_\Omega|\bm{v}|^{r-2}|\nabla\bm{v}|^2{\rm d}\bm{x}
+C(r)\|\theta\|_{H^1}^2\|\bm{v}\|_{L^r}^{r-2}.
\een
Notice that $\int_\Omega|\bm{v}|^{r-2}|\nabla\bm{v}|^2{\rm d}\bm{x}$ and $\frac{4(r-2)}{r^2\sigma}\|\nabla|\bm{v}|^{\frac{r}{2}}\|_{L^2}^2$ appearing in \eqref{vLr-2} are non-negative terms, so substituting $\eqref{vLr-2}-\eqref{vLr-4}$ into \eqref{vLr-1}, one can deduce
\beno
\frac{{\rm d}}{{\rm d}t}\|\bm{v}(t)\|_{L^r}^2
\leq C(r)\|\nabla\bm{u}\|_{L^2}^2\|\bm{v}\|_{L^r}^2+C(r)\|\theta\|_{H^1}^2.
\eeno
Making full use of the Sobolev embedding inequality and the Gronwall inequality, it is certain to conclude that
\ben\label{vLr-6}
\|\bm{v}(t)\|_{L^r}^2
\leq(\|\bm{v}_0\|_{L^r}^2+C(r)\int_0^t\|\theta\|_{H^1}^2{\rm d}\tau)e^{\int_0^t\|\nabla\bm{u}\|_{L^2}^2{\rm d}\tau}
\leq C(r,\bm{u}_0,\bm{v}_0,\theta_0).
\een

In terms of the equation of $\theta$, we can use the similar method to deal with it and finally obtain \eqref{vLrest}. To avoid repetition, we omit the elaborate calculations here.
\end{proof}

\subsection{$L^\infty$ estimates of $\theta$}
\
\vskip .1in
It can be seen that we cannot directly obtain the $L_t^\infty L^\infty$ estimates of $\theta$ from \eqref{vLrest}. Fortunately, with the $L_t^\infty L^r$ estimates of $\bm{v}$, we can employ the De Giorgi-Nash-Moser iteration method to obtain the desired estimates, whose core idea is Lemma \ref{giorgi}.
\begin{proposition}\label{seta-infinityest}
Under the presumptions of Proposition \ref{weak1}, then for any $t>0$, it holds that
\ben\label{seta-infinity}
\|\theta\|_{L^\infty}\leq C(\sigma,\bm{u}_0,\bm{v}_0,\theta_0).
\een
\end{proposition}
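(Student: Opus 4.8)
The plan is to run a De Giorgi--Nash--Moser truncation argument on the temperature equation, using Lemma \ref{giorgi} as the engine. Fix a height $K\geq\|\theta_0\|_{L^\infty}$ to be chosen large, set $k_m=K(2-2^{-m})$ so that $k_0=K$ and $k_m\uparrow 2K$, and write $w_m=(\theta-k_m)_+$ together with the superlevel sets $A_m=\{\bm{x}\in\Omega:\theta>k_m\}$. First I would derive the truncated energy inequality: testing the third equation of \eqref{3D} against $w_m$, the convective term drops by $\nabla\cdot\bm{u}=0$ and $w_m|_{\partial\Omega}=0$, the diffusion yields $\int_\Omega\kappa(\theta)|\nabla w_m|^2\,{\rm d}\bm{x}\geq\sigma^{-1}\|\nabla w_m\|_{L^2}^2$, and the forcing is integrated by parts as $\int_\Omega(-\nabla\cdot\bm{v})w_m\,{\rm d}\bm{x}=\int_{A_m}\bm{v}\cdot\nabla w_m\,{\rm d}\bm{x}$. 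Absorbing half of the gradient by Young's inequality and integrating in time, using $w_m(0)=0$ because $k_m\geq K\geq\|\theta_0\|_{L^\infty}$, yields $U_m\leq C\sigma^2\int_0^\infty\int_{A_m}|\bm{v}|^2\,{\rm d}\bm{x}\,{\rm d}t$, where $U_m=\sup_{t\geq0}\|w_m(t)\|_{L^2}^2+\int_0^\infty\|\nabla w_m\|_{L^2}^2\,{\rm d}t$.

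The heart of the matter is turning the right-hand side into a superlinear recurrence in $U_m$. Two facts feed in. On one hand, the two-dimensional Ladyzhenskaya/Gagliardo--Nirenberg inequality of Lemma \ref{P1} interpolates the two pieces of $U_m$ into a genuinely quadratic spacetime bound $\int_0^\infty\|w_m\|_{L^4}^4\,{\rm d}t\leq C\,U_m^2$. On the other hand, although Proposition \ref{vLr} only supplies a uniform-in-time bound $\|\bm{v}(t)\|_{L^q}\leq C(q)$, interpolating it against the exponential $L^2$ decay of Proposition \ref{weak2} gives exponential decay of every intermediate norm $\|\bm{v}(t)\|_{L^{q}}$, hence $\bm{v}\in L^{q}((0,\infty)\times\Omega)$ with finite norm for every finite $q$. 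I would then estimate the forcing by the spacetime H\"older inequality $\int_0^\infty\int_{A_m}|\bm{v}|^2\,{\rm d}\bm{x}\,{\rm d}t\leq\|\bm{v}\|_{L^{2p}_{t,\bm{x}}}^2\,|Q_m|^{1/p'}$, where $Q_m=\{(\bm{x},t):\theta>k_m\}$, choosing $p>2$ (say $p=3$, so that only $\bm{v}\in L^6_{t,\bm{x}}$ is needed), and controlling the spacetime measure by Chebyshev: on $A_{m+1}$ one has $w_m\geq k_{m+1}-k_m=K2^{-(m+1)}$, whence $|Q_{m+1}|\leq (2^{m+1}/K)^4\int_0^\infty\int_\Omega w_m^4\,{\rm d}\bm{x}\,{\rm d}t\leq C(2^{m+1}/K)^4U_m^2$.

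Combining these gives a recurrence of the form $U_{m+1}\leq a\,b^m\,U_m^{\gamma}$ with $b=2^{4/p'}>1$, $a=C\sigma/K^{4/p'}$ and $\gamma=2/p'>1$ (for $p=3$, $\gamma=4/3$), which is exactly the shape required by Lemma \ref{giorgi}. Since $U_0\leq C\sigma^2\int_0^\infty\int_{A_0}|\bm{v}|^2\,{\rm d}\bm{x}\,{\rm d}t$ is a finite quantity bounded independently of $K$ (being nonincreasing in $K$), while the smallness threshold $a^{-1/(\gamma-1)}b^{-1/(\gamma-1)^2}$ grows like a positive power of $K$, I would fix $K=K(\sigma,\bm{u}_0,\bm{v}_0,\theta_0)$ large enough that \eqref{yasuo2} holds. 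Lemma \ref{giorgi} then forces $U_m\to0$, so $(\theta-2K)_+=0$, i.e. $\theta\leq 2K$ almost everywhere. Applying the identical argument to $-\theta$ (whose equation carries forcing $+\nabla\cdot\bm{v}$ and the same dissipative structure) bounds $\theta$ from below, giving \eqref{seta-infinity} with $C=2K$.

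\textbf{The main obstacle.} The delicate point is producing the exponent $\gamma>1$ uniformly on the infinite time interval. The parabolic $L^4_{t,\bm{x}}$ gain is quadratic in $U_m$, but pairing it through H\"older forces the forcing $\bm{v}$ into $L^{q}_{t,\bm{x}}$ for some $q>4$; over $(0,\infty)$ this finiteness is unavailable from the uniform $L^r$ bound of Proposition \ref{vLr} alone and is precisely what the exponential decay of Proposition \ref{weak2} supplies after interpolation. Guaranteeing simultaneously that the base energy $U_0$ meets the smallness condition \eqref{yasuo2} — which is what licenses the choice of the cut-off height $K$ — is the other point requiring care.
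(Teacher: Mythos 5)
Your proposal is correct and takes essentially the same route as the paper: a De Giorgi iteration on truncations of $\theta$, the truncated energy inequality, the parabolic Ladyzhenskaya interpolation giving a quadratic spacetime bound, Chebyshev on superlevel sets, spacetime H\"older against an $L^6_{t,\bm{x}}$ bound on $\bm{v}$, and Lemma \ref{giorgi} with the truncation height chosen large enough (the paper uses levels $N_k=M(1-2^{-k-1})$ and the same exponent $\gamma=4/3$). The only differences are bookkeeping: the paper obtains $\bm{v}\in L^6_{t,\bm{x}}$ over $(0,\infty)$ by Gagliardo--Nirenberg interpolation of the uniform $L^\infty_tL^4$ bound of Proposition \ref{vLr} with the globally finite dissipation $\|\nabla\bm{v}\|_{L^2_tL^2}$ from \eqref{gronwall1}, rather than via the exponential decay of Proposition \ref{weak2} as you do, and it fixes the height $M$ by an explicit formula balancing \eqref{yasuo2} instead of your largeness argument; both are legitimate.
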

\begin{proof}
Let $N_k=M(1-2^{-k-1}),k\in N$, $M$ is a positive constant satisfying $M\geq4\|\theta_0\|_{L^\infty}$, which will be given specifically later.
In addition, $\bm{x}_{+}=\max{\{\bm{x},0\}}.$  First, multiplying the third equation of $\eqref{3D}$ by the test function $(\theta-N_k)_{+}$, we achieve
\ben\label{yita-2}
&&\frac{1}{2}\frac{{\rm d}}{{\rm d}t}\|(\theta-N_k)_{+}\|_{L^2}^2
+\frac{1}{\sigma}\int_\Omega|\nabla(\theta-N_k)_{+}|^2{\rm d}\bm{x}\notag\\
&\leq&-\int_\Omega(\nabla\cdot\bm{v})(\theta-N_k)_{+}{\rm d}\bm{x}\notag\\
&=&-\int_{\partial\Omega}v_i(\theta-N_k)_{+}n_i{\rm d}s
+\int_\Omega v_i\partial_i(\theta-N_k)_{+}{\rm d}\bm{x}\notag\\
&\leq&\frac{1}{2\sigma}\|\nabla(\theta-N_k)_{+}\|_{L^2}^2+C\|\bm{v}\cdot1_{\{\theta\geq N_k\}}\|_{L^2}^2,
\een
where we utilize the truth $(\theta-N_k)_{+}|_{\partial\Omega}=0$.
Therefore,
\beno
&&\|(\theta-N_k)_{+}(t)\|_{L_t^\infty L^2}^2
+\frac{1}{\sigma}\|\nabla(\theta-N_k)_{+}\|_{L_t^2L^2}^2
\leq C\int_0^t\|\bm{v}\cdot1_{\{\theta\geq N_k\}}\|_{L^2}^2{\rm d}\tau,
\eeno
where $1_{\{\theta\geq N_k\}}$ represents the characteristic function.
Denoting
\ben\label{yita-30}
A_k=\|(\theta-N_k)_{+}(t)\|_{L_t^\infty L^2}^2+\frac{1}{\sigma}\|\nabla(\theta-N_k)_{+}\|_{L_t^2L^2}^2,
\een
it is clear that
\ben\label{yita-4}
A_k
\leq C\int_0^t\|\bm{v}\cdot1_{\{\theta\geq N_k\}}\|_{L^2}^2{\rm d}\tau
\leq C\|\bm{v}\|_{L_t^6 L^6}^2(\int_0^t|\{\theta\geq N_k\}|{\rm d}\tau)^{\frac{2}{3}}
\leq C\Phi^2(\int_0^t|\{\theta\geq N_k\}|{\rm d}\tau)^{\frac{2}{3}},
\een
where we use the fact
\beno
\|\bm{v}\|_{L_t^6 L^6}\leq C\|\bm{v}\|_{L_t^\infty L^4}^{\frac{2}{3}}\|\nabla\bm{v}\|_{L_t^2 L^2}^{\frac{1}{3}}:=\Phi
\eeno
which can be guaranteed by \eqref{gronwall1}, Lemma \ref{P1} and Proposition \ref{vLr}.

If $\theta\geq N_k$, it shows
\beno
(\theta-N_{k-1})_{+}=(\theta-N_{k})_{+}+N_k-N_{k-1}\geq2^{-k-1}M.
\eeno
Hence,
\ben\label{yita-40}
|\{\theta\geq N_k\}|
\leq\int_{\{\theta\geq N_k\}}\Big[\frac{2^{k+1}}{M}(\theta-N_{k-1})_{+}\Big]^4{\rm d}\bm{x}
\leq\Big(\frac{2^{k+1}}{M}\Big)^4\|(\theta-N_{k-1})_{+}\|_{L^4}^4.
\een
Putting \eqref{yita-40} into \eqref{yita-4} yields that
\ben\label{yita-41}
A_k
&\leq&C\Phi^2\Big(\frac{2^{k+1}}{M}\Big)^{\frac{8}{3}}(\int_0^t\|(\theta-N_{k-1})_{+}\|_{L^4}^4{\rm d}\tau)^{\frac{2}{3}}\notag\\
&=&C\Phi^2\Big(\frac{2^{k+1}}{M}\Big)^{\frac{8}{3}}\|(\theta-N_{k-1})_{+}\|_{L_t^4L^4}^{\frac{8}{3}}\notag\\
&\leq&C\Phi^2\Big(\frac{2^{k+1}}{M}\Big)^{\frac{8}{3}}\Big(\|(\theta-N_{k-1})_{+}\|_{L_t^\infty L^2}^{\frac{1}{2}}\|\nabla(\theta-N_{k-1})_{+}\|_{L_t^2L^2}^{\frac{1}{2}}\Big)^{\frac{8}{3}}\notag\\
&\leq&C_1\Phi^2(\frac{2^{k+1}}{M}\Big)^{\frac{8}{3}}A_{k-1}^{\frac{4}{3}}.
\een

Besides,
\beno
\Big|\Big\{\theta\geq N_0=\frac{M}{2}\Big\}\Big|\leq\Big(\frac{2}{M}\Big)^4\|\theta\|_{L^4}^4,
\eeno
by means of \eqref{yita-4}, one can attain
\ben\label{yita-5}
A_0\leq C\Phi^2\Big(\int_0^t\Big[\Big(\frac{2}{M}\Big)^4\|\theta\|_{L^4}^4\Big]{\rm d}\tau\Big)^{\frac{2}{3}}
\leq C\Phi^2\Big(\frac{2}{M}\Big)^{\frac{8}{3}}\|\theta\|_{L_t^4L^4}^{\frac{8}{3}}.
\een

Applying \eqref{gronwall1} and Lemma \ref{P1}, we can get
\ben\label{yita-6}
\|\theta\|_{L_t^4L^4}\leq C\|\theta\|_{L_t^\infty L^2}^{\frac{1}{2}}\|\nabla\theta\|_{L_t^2L^2}^{\frac{1}{2}}:=\hat{\Phi}.
\een
Thus, we update \eqref{yita-5} as
\ben\label{yita-7}
A_0\leq C_2\Phi^2\Big(\frac{2}{M}\Big)^{\frac{8}{3}}{\hat{\Phi}}^{\frac{8}{3}}.
\een

To apply \eqref{yasuo1} and \eqref{yasuo2}, we take
$a=C_1\Phi^2M^{-\frac{8}{3}}2^\frac{16}{3},\,b=2^{\frac{8}{3}}$ in \eqref{yita-41}
such that
$(C_1\Phi^2M^{-\frac{8}{3}}2^\frac{16}{3})^{-3}(2^{\frac{8}{3}})^{-9}\\
=C_2\Phi^2M^{-\frac{8}{3}}2^\frac{8}{3}{\hat{\Phi}}^{\frac{8}{3}}.$
Therefore, we hold $M=16C_1^{\frac{9}{32}}C_2^{\frac{3}{32}}\Phi^{\frac{3}{4}}\hat{\Phi}^{\frac{1}{4}},$
together with Lemma \ref{giorgi}, which allows us to obtain
$$\lim_{k\rightarrow\infty}A_k=0.$$
In this way, it has
$\theta(\bm{x},t)\leq M.$
\vskip .1in
Applying the same argument to $-\theta$, we also deduce
$\theta(\bm{x},t)\geq-M.$
Thus, we end up with
\ben\label{yita-21}
\|\theta\|_{L^\infty}\leq M.
\een
\end{proof}

In fact, we can draw a conclusion from \eqref{yita-21} that
\ben\label{yita-22}
\mathop{\max}\limits_{\theta\in[-M,M]}
\{|\kappa(\cdot),\kappa'(\cdot),\kappa''(\cdot)|,|\mu(\cdot),\mu'(\cdot),\mu''(\cdot)|,|\sigma(\cdot),\sigma'(\cdot),\sigma''(\cdot)|\}\leq \widetilde{M},
\een
where $\widetilde{M}$ is a constant only depending on $M$.
\vskip .1in
\begin{lemma}\label{setacompare}
With the definition \eqref{bigseta} of $\hat{\Theta}$, for any $q\in[2,\infty]$, it follows that
\beno
\sigma^{-1}\|\nabla\theta\|_{L^q}\leq\|\nabla\hat{\Theta}\|_{L^q}\leq \widetilde{M}\|\nabla\theta\|_{L^q},\\
\sigma^{-1}\|\theta_t\|_{L^q}\leq\|\hat{\Theta}_t\|_{L^q}\leq \widetilde{M}\|\theta_t\|_{L^q}.
\eeno
\end{lemma}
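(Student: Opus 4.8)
The plan is to reduce both pairs of inequalities to a single pointwise estimate obtained from the chain rule. Since $\hat{\Theta}=\int_0^\theta\kappa(z)\dd z$ depends on $\bm{x}$ and $t$ only through the scalar $\theta$, differentiating under the integral sign gives at once $\nabla\hat{\Theta}=\kappa(\theta)\nabla\theta$ and $\hat{\Theta}_t=\kappa(\theta)\theta_t$. Hence the whole lemma reduces to controlling the scalar multiplier $\kappa(\theta)$ from above and from below, and then transferring that pointwise control to the $L^q$ norms.

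For the lower bound I would invoke the standing structural hypothesis $\kappa(\theta)\geq\sigma^{-1}$ for all $\theta\in\RR$ imposed in the Introduction; this yields the pointwise inequalities $|\nabla\hat{\Theta}|=\kappa(\theta)|\nabla\theta|\geq\sigma^{-1}|\nabla\theta|$ and, identically, $|\hat{\Theta}_t|\geq\sigma^{-1}|\theta_t|$ almost everywhere. For the upper bound I would use the uniform $L^\infty$ estimate $\|\theta\|_{L^\infty}\leq M$ furnished by Proposition \ref{seta-infinityest}: because $\theta(\bm{x},t)\in[-M,M]$ almost everywhere, the smoothness of $\kappa$ together with the bound \eqref{yita-22} gives $\kappa(\theta)\leq\widetilde{M}$ pointwise, so that $|\nabla\hat{\Theta}|\leq\widetilde{M}|\nabla\theta|$ and $|\hat{\Theta}_t|\leq\widetilde{M}|\theta_t|$.

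Combining these into the two-sided pointwise bounds $\sigma^{-1}|\nabla\theta|\leq|\nabla\hat{\Theta}|\leq\widetilde{M}|\nabla\theta|$ and the analogous statement for the time derivative, I would finish by raising to the power $q$ and integrating over $\Omega$ when $q<\infty$, or by passing to the essential supremum when $q=\infty$. Both operations respect the pointwise ordering, so the claimed $L^q$ inequalities follow directly for every $q\in[2,\infty]$.

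Since every step rests only on the chain rule and the monotonicity of the map $h\mapsto\|h\|_{L^q}$ with respect to pointwise absolute value, there is no genuine analytic difficulty here. The single point that must be secured beforehand is that $\kappa(\theta)$ is actually bounded on the range of $\theta$, and this is precisely what the previously established $L^\infty$ bound on $\theta$ guarantees through \eqref{yita-22}; without that bound the upper estimate would be unavailable, which is why this lemma is placed immediately after Proposition \ref{seta-infinityest}.
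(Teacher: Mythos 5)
Your proof is correct and follows exactly the route the paper intends: the chain-rule identities $\nabla\hat{\Theta}=\kappa(\theta)\nabla\theta$, $\hat{\Theta}_t=\kappa(\theta)\theta_t$ (recorded in \eqref{da-seta}), the structural lower bound $\kappa\geq\sigma^{-1}$, and the upper bound $\kappa(\theta)\leq\widetilde{M}$ coming from $\|\theta\|_{L^\infty}\leq M$ via \eqref{yita-22}, after which the $L^q$ inequalities follow by monotonicity of the norm under pointwise domination. The paper in fact states the lemma without proof, treating it as immediate from these ingredients, so your write-up simply supplies the details it leaves implicit.
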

\vskip .1in
\subsection{$H^1$ estimates}
\begin{proposition}\label{seta-L2H2}
Under the presumptions of Proposition \ref{weak1}, then for any $t>0$, it holds that
\beno
\|\nabla\theta\|_{L^2}^2+\|\nabla\hat{\Theta}\|_{L^2}^2\leq Ce^{-\alpha t}
\eeno
and
\beno
\int_0^te^{\alpha\tau}(\|\theta_\tau\|_{L^2}^2+\|\hat{\Theta}_\tau\|_{L^2}^2){\rm d}\tau+
\int_0^te^{\alpha\tau}(\|\nabla^2\theta\|_{L^2}^2+\|\nabla^2\hat{\Theta}\|_{L^2}^2){\rm d}\tau
\leq C,
\eeno
where $C=C(\|\bm{u}_0\|_{L^2},\|\bm{v}_0\|_{L^2},\|\theta_0\|_{H^1},\widetilde{M},\sigma,\alpha).$
\end{proposition}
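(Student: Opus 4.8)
The plan is to derive every bound first for the good unknown $\hat{\Theta}$ introduced in \eqref{bigseta} and then transfer it to $\theta$ via Lemma \ref{setacompare}. Since $\theta|_{\partial\Omega}=0$ forces $\hat{\Theta}|_{\partial\Omega}=0$, the function $\hat{\Theta}$ lies in $H_0^1$ and solves $\frac{1}{\kappa(\theta)}(\hat{\Theta}_t+\bm{u}\cdot\nabla\hat{\Theta})-\Delta\hat{\Theta}=-\nabla\cdot\bm{v}$, where the coefficient $\frac{1}{\kappa(\theta)}$ is trapped between $\widetilde{M}^{-1}$ and $\sigma$ by \eqref{yita-21}--\eqref{yita-22} and $\kappa\ge\sigma^{-1}$. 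The two engines are: (i) testing this equation against $\hat{\Theta}_t$, which simultaneously produces $\frac12\frac{\rm d}{{\rm d}t}\|\nabla\hat{\Theta}\|_{L^2}^2$ (from $-\Delta\hat{\Theta}$ after integrating by parts, using $\hat{\Theta}_t|_{\partial\Omega}=0$) and a coercive $\widetilde{M}^{-1}\|\hat{\Theta}_t\|_{L^2}^2$; and (ii) the elliptic estimate of Remark \ref{tidu}, $\|\nabla^2\hat{\Theta}\|_{L^2}\le C\|\Delta\hat{\Theta}\|_{L^2}$, which turns the equation into an a posteriori bound for the full Hessian.

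I would first obtain $\frac12\frac{\rm d}{{\rm d}t}\|\nabla\hat{\Theta}\|_{L^2}^2+\widetilde{M}^{-1}\|\hat{\Theta}_t\|_{L^2}^2\le|I_{\mathrm{conv}}|+|I_{\mathrm{rhs}}|$, with $I_{\mathrm{conv}}=-\int_\Omega\frac{1}{\kappa(\theta)}(\bm{u}\cdot\nabla\hat{\Theta})\hat{\Theta}_t\dd\bm{x}$ and $I_{\mathrm{rhs}}=-\int_\Omega(\nabla\cdot\bm{v})\hat{\Theta}_t\dd\bm{x}$. The source is immediate: $|I_{\mathrm{rhs}}|\le\epsilon\|\hat{\Theta}_t\|_{L^2}^2+C\|\nabla\bm{v}\|_{L^2}^2$. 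The convection term is the crux: from $|I_{\mathrm{conv}}|\le\sigma\|\bm{u}\|_{L^4}\|\nabla\hat{\Theta}\|_{L^4}\|\hat{\Theta}_t\|_{L^2}$ and the interpolation $\|\nabla\hat{\Theta}\|_{L^4}\le C\|\nabla\hat{\Theta}\|_{L^2}^{1/2}\|\nabla^2\hat{\Theta}\|_{L^2}^{1/2}+C\|\nabla\hat{\Theta}\|_{L^2}$ of Lemma \ref{P1}, the Hessian is forced to appear. I close the loop by feeding $\|\hat{\Theta}_t\|_{L^2}$, $\|\bm{u}\cdot\nabla\hat{\Theta}\|_{L^2}$ and $\|\nabla\bm{v}\|_{L^2}$ into $\|\nabla^2\hat{\Theta}\|_{L^2}\le C\|\Delta\hat{\Theta}\|_{L^2}$, using Young's inequality to absorb the self-referential $\|\nabla^2\hat{\Theta}\|_{L^2}$, giving $\|\nabla^2\hat{\Theta}\|_{L^2}^2\le C\|\hat{\Theta}_t\|_{L^2}^2+C(\|\bm{u}\|_{L^4}^2+\|\bm{u}\|_{L^4}^4)\|\nabla\hat{\Theta}\|_{L^2}^2+C\|\nabla\bm{v}\|_{L^2}^2$. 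Combining and absorbing the $\epsilon\|\hat{\Theta}_t\|_{L^2}^2$ and $\delta\|\nabla^2\hat{\Theta}\|_{L^2}^2$ contributions into the coercive left side yields $\frac{\rm d}{{\rm d}t}\|\nabla\hat{\Theta}\|_{L^2}^2+c_1\|\hat{\Theta}_t\|_{L^2}^2+c_2\|\nabla^2\hat{\Theta}\|_{L^2}^2\le Cg(t)\|\nabla\hat{\Theta}\|_{L^2}^2+C\|\nabla\bm{v}\|_{L^2}^2$ with $g=\|\bm{u}\|_{L^4}^2+\|\bm{u}\|_{L^4}^4$.

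Decay is then extracted through the weight $e^{\alpha t}$. The coercivity $\|\nabla\hat{\Theta}\|_{L^2}\le C\|\nabla^2\hat{\Theta}\|_{L^2}$ follows from $\|\nabla\hat{\Theta}\|_{L^2}^2=-\int_\Omega\hat{\Theta}\Delta\hat{\Theta}\dd\bm{x}$, Poincar\'e and Cauchy--Schwarz; taking $\alpha$ small (and no larger than the rate in Proposition \ref{weak2}) lets the extra $\alpha e^{\alpha t}\|\nabla\hat{\Theta}\|_{L^2}^2$ be swallowed by $\frac{c_2}{2}e^{\alpha t}\|\nabla^2\hat{\Theta}\|_{L^2}^2$. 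Writing $Y(t)=e^{\alpha t}\|\nabla\hat{\Theta}\|_{L^2}^2$ reduces matters to $Y'\le Cg(t)Y+Ce^{\alpha t}\|\nabla\bm{v}\|_{L^2}^2$; here I use that $g\in L^1(0,\infty)$, since $\|\bm{u}\|_{L^4}^2\le C\|\bm{u}\|_{L^2}\|\nabla\bm{u}\|_{L^2}+C\|\bm{u}\|_{L^2}^2$ and $\|\bm{u}\|_{L^4}^4\le C\|\bm{u}\|_{L^2}^2\|\nabla\bm{u}\|_{L^2}^2$ are integrable once the $L^2$-decay of $\bm{u}$ from Proposition \ref{weak2} is combined with $\int_0^\infty\|\nabla\bm{u}\|_{L^2}^2\dd\tau<\infty$ from \eqref{gronwall1}, and that $\int_0^t e^{\alpha\tau}\|\nabla\bm{v}\|_{L^2}^2\dd\tau\le C$ by Proposition \ref{weak2}. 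Gronwall gives $Y(t)\le C$, i.e.\ $\|\nabla\hat{\Theta}\|_{L^2}^2\le Ce^{-\alpha t}$, and integrating the weighted inequality yields $\int_0^t e^{\alpha\tau}(\|\hat{\Theta}_\tau\|_{L^2}^2+\|\nabla^2\hat{\Theta}\|_{L^2}^2)\dd\tau\le C$. Lemma \ref{setacompare} converts the bounds for $\nabla\hat{\Theta}$ and $\hat{\Theta}_t$ into those for $\nabla\theta$ and $\theta_t$, while the $\theta$-Hessian bound follows from $\nabla^2\hat{\Theta}=\kappa'(\theta)\nabla\theta\otimes\nabla\theta+\kappa(\theta)\nabla^2\theta$ together with the already-proved exponential decay of $\|\nabla\theta\|_{L^2}$, which makes the quadratic gradient term subordinate.

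The main obstacle is exactly $I_{\mathrm{conv}}$: the variable coefficient blocks any clean cancellation, so its control inevitably summons $\|\nabla^2\hat{\Theta}\|_{L^2}$, a quantity available only through the elliptic estimate whose right-hand side again contains $\|\hat{\Theta}_t\|_{L^2}$ and $\|\nabla^2\hat{\Theta}\|_{L^2}$. The delicate point is to tune the three Young's-inequality splittings so that every reappearance of $\|\hat{\Theta}_t\|_{L^2}^2$ and $\|\nabla^2\hat{\Theta}\|_{L^2}^2$ is strictly dominated by the coercive left side, while the residual coefficient of $\|\nabla\hat{\Theta}\|_{L^2}^2$ stays a genuinely time-integrable function of $\bm{u}$ with \emph{no} stray constant, so that the exponentially weighted Gronwall argument yields a bound uniform in $t$ rather than one growing like $e^{Ct}$.
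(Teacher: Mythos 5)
Your proposal follows the paper's own proof almost step for step: the same good unknown $\hat{\Theta}$, testing the rewritten equation against $\hat{\Theta}_t$, controlling the convection term by Gagliardo--Nirenberg, bootstrapping $\|\nabla^2\hat{\Theta}\|_{L^2}$ through the elliptic estimate applied to the equation itself, and closing with an exponentially weighted Gronwall argument whose coefficient is time-integrable. Your organizational variants are all sound: keeping $c_2\|\nabla^2\hat{\Theta}\|_{L^2}^2$ on the left instead of recovering it a posteriori, using $g=\|\bm{u}\|_{L^4}^2+\|\bm{u}\|_{L^4}^4$ in place of the paper's $\|\bm{u}\|_{L^2}^2\|\nabla\bm{u}\|_{L^2}^2$, and absorbing $\alpha e^{\alpha t}\|\nabla\hat{\Theta}\|_{L^2}^2$ via $\|\nabla\hat{\Theta}\|_{L^2}\leq C\|\nabla^2\hat{\Theta}\|_{L^2}$ after shrinking $\alpha$; the paper instead bounds this term by $Ce^{\alpha t}\|\nabla\theta\|_{L^2}^2$ and treats it as an integrable Gronwall source, which leaves $\alpha$ untouched, but both devices are legitimate since Theorem \ref{T1} allows $\alpha$ to depend on the data.

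However, your final step has a genuine gap. To pass from $\int_0^t e^{\alpha\tau}\|\nabla^2\hat{\Theta}\|_{L^2}^2\dd\tau\leq C$ to the same bound for $\nabla^2\theta$, you write $\kappa(\theta)\nabla^2\theta=\nabla^2\hat{\Theta}-\kappa'(\theta)\nabla\theta\otimes\nabla\theta$ and assert that the decay of $\|\nabla\theta\|_{L^2}$ makes the quadratic term subordinate. Made quantitative, this reads $\|\nabla^2\theta\|_{L^2}\leq\sigma\|\nabla^2\hat{\Theta}\|_{L^2}+\sigma\widetilde{M}\|\nabla\theta\|_{L^4}^2$ together with $\|\nabla\theta\|_{L^4}^2\leq C\|\nabla\theta\|_{L^2}\|\nabla^2\theta\|_{L^2}+C\|\nabla\theta\|_{L^2}^2$, so absorbing the Hessian on the right requires the smallness $C\sigma\widetilde{M}\|\nabla\theta\|_{L^2}\leq\tfrac12$. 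The decay estimate only provides $\|\nabla\theta\|_{L^2}\leq C_0e^{-\alpha t/2}$ with $C_0$ determined by the (possibly large) initial data, so this smallness is guaranteed only for $t\geq T_0$ with $T_0=T_0(C_0,\sigma,\widetilde{M},\alpha)$, and your argument yields no control of $\|\nabla^2\theta\|_{L^2}$ on $[0,T_0]$ --- exactly where the weighted integral must also be bounded; the reasoning cannot be salvaged by restricting to large times, since the missing piece $\int_0^{T_0}e^{\alpha\tau}\|\nabla^2\theta\|_{L^2}^2\dd\tau$ is part of what is being proved. The repair is immediate and is precisely the paper's route: invert the relation, $\nabla^2\theta=\frac{1}{\kappa(\theta)}\nabla^2\hat{\Theta}-\frac{\kappa'(\theta)}{[\kappa(\theta)]^3}\nabla\hat{\Theta}\otimes\nabla\hat{\Theta}$ (equivalently, use Lemma \ref{setacompare} with $q=4$ to replace $\|\nabla\theta\|_{L^4}$ by $\sigma\|\nabla\hat{\Theta}\|_{L^4}$), so that the quadratic term is bounded by $C(\|\nabla\hat{\Theta}\|_{L^2}\|\nabla^2\hat{\Theta}\|_{L^2}+\|\nabla\hat{\Theta}\|_{L^2}^2)$ --- quantities you have already estimated --- and no absorption of $\|\nabla^2\theta\|_{L^2}$ is needed at all.
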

\begin{proof}
We define $\hat{\Theta}=K(\theta)=\int_0^\theta\kappa(z){\rm d}z,$ then it follows
\ben\label{da-seta}
\hat{\Theta}_t=\kappa(\theta)\theta_t,\quad \partial_i\hat{\Theta}=\kappa(\theta)\partial_i\theta,\quad \Delta\hat{\Theta}=\partial_i(\kappa(\theta)\partial_i\theta).
\een
In this way, we can attain the equation of $\hat{\Theta}$:
\ben\label{da-seta1}
\frac{1}{\kappa(\theta)}(\hat{\Theta}_t+\bm{u}\cdot\nabla\hat{\Theta})-\Delta\hat{\Theta}=-\nabla\cdot\bm{v}.
\een
In addition, the initial condition is
$\hat{\Theta}(\bm{x},0)=K(\theta_0(\bm{x}))\triangleq\hat{\Theta}_0.$
Because of $\theta|_{\p\Omega}=0,$ the boundary condition of the equation $\eqref{da-seta1}$ is
$\hat{\Theta}|_{\p\Omega}=0.$
Taking inner product with $\eqref{da-seta1}$ by $\hat{\Theta}_t$ shows
\ben\label{da-seta3}
\frac{1}{2}\frac{{\rm d}}{{\rm d}t}\|\nabla\hat{\Theta}\|_{L^2}^2+\int_\Omega\frac{1}{\kappa(\theta)}\hat{\Theta}_t^2{\rm d}\bm{x}
=-\int_\Omega\frac{1}{\kappa(\theta)}(\bm{u}\cdot\nabla)\hat{\Theta}\hat{\Theta}_t{\rm d}\bm{x}
-\int_\Omega(\nabla\cdot\bm{v})\hat{\Theta}_t{\rm d}\bm{x}.
\een
By means of Lemma \ref{P1}, we derive
\ben\label{da-seta5}
&&|-\int_\Omega\frac{1}{\kappa(\theta)}(\bm{u}\cdot\nabla)\hat{\Theta}\hat{\Theta}_t{\rm d}\bm{x}|\notag\\
&\leq&\sigma\|\bm{u}\|_{L^4}\|\nabla\hat{\Theta}\|_{L^4}\|\hat{\Theta}_t\|_{L^2}\notag\\
&\leq&C\|\bm{u}\|_{L^2}^{\frac{1}{2}}\|\nabla\bm{u}\|_{L^2}^{\frac{1}{2}}(\|\nabla\hat{\Theta}\|_{L^2}^{\frac{1}{2}}
\|\nabla^2\hat{\Theta}\|_{L^2}^{\frac{1}{2}}+\|\nabla\hat{\Theta}\|_{L^2})\|\hat{\Theta}_t\|_{L^2}.
\een
Putting $\eqref{da-seta5}$ into \eqref{da-seta3}, by virtue of \eqref{yita-22},
we can conclude that
\ben\label{da-seta7}
&&\frac{1}{2}\frac{{\rm d}}{{\rm d}t}\|\nabla\hat{\Theta}\|_{L^2}^2+\frac{\|\hat{\Theta}_t\|_{L^2}^2}{\widetilde{M}}\notag\\
&\leq&C\|\bm{u}\|_{L^2}^{\frac{1}{2}}\|\nabla\bm{u}\|_{L^2}^{\frac{1}{2}}\|\nabla\hat{\Theta}\|_{L^2}^{\frac{1}{2}}
\|\nabla^2\hat{\Theta}\|_{L^2}^{\frac{1}{2}}\|\hat{\Theta}_t\|_{L^2}
+C\|\bm{u}\|_{L^2}^{\frac{1}{2}}\|\nabla\bm{u}\|_{L^2}^{\frac{1}{2}}\|\nabla\hat{\Theta}\|_{L^2}
\|\hat{\Theta}_t\|_{L^2}\notag\\
&&+\,C\|\hat{\Theta}_t\|_{L^2}\|\nabla\bm{v}\|_{L^2}.
\een

Our next goal is to handle $\|\nabla^2\hat{\Theta}\|_{L^2}$, which appears in \eqref{da-seta7}.
On account of the $L^p$ estimates of elliptic equations, together with the equation \eqref{da-seta1}, it yields
\ben\label{da-seta8}
\|\nabla^2\hat{\Theta}\|_{L^2}
&\leq&C\|\hat{\Theta}_t\|_{L^2}+C\|\bm{u}\|_{L^4}\|\nabla\hat{\Theta}\|_{L^4}+\|\nabla\bm{v}\|_{L^2}\notag\\
&\leq&C\|\hat{\Theta}_t\|_{L^2}+C\|\bm{u}\|_{L^2}^{\frac{1}{2}}\|\nabla\bm{u}\|_{L^2}^{\frac{1}{2}}(\|\nabla\hat{\Theta}\|_{L^2}^{\frac{1}{2}}
\|\nabla^2\hat{\Theta}\|_{L^2}^{\frac{1}{2}}+\|\nabla\hat{\Theta}\|_{L^2})+\|\nabla\bm{v}\|_{L^2}\notag\\
&\leq&\frac{1}{2}\|\nabla^2\hat{\Theta}\|_{L^2}+C\|\hat{\Theta}_t\|_{L^2}+C\|\bm{u}\|_{L^2}\|\nabla\bm{u}\|_{L^2}\|\nabla\hat{\Theta}\|_{L^2}+\|\nabla\bm{v}\|_{L^2}.
\een
Plugging \eqref{da-seta8} into \eqref{da-seta7}, one can show
\ben\label{da-seta9}
\frac{{\rm d}}{{\rm d}t}\|\nabla\hat{\Theta}\|_{L^2}^2+\frac{\|\hat{\Theta}_t\|_{L^2}^2}{\widetilde{M}}
\leq C\|\bm{u}\|_{L^2}^2\|\nabla\bm{u}\|_{L^2}^2\|\nabla\hat{\Theta}\|_{L^2}^2+C\|\nabla\bm{v}\|_{L^2}^2.
\een

Multiplying \eqref{da-seta9} with $e^{\alpha t}$, we derive
\ben\label{da-seta113}
&&\frac{{\rm d}}{{\rm d}t}(e^{\alpha t}\|\nabla\hat{\Theta}\|_{L^2}^2)+\frac{1}{\widetilde{M}}e^{\alpha t}\|\hat{\Theta}_t\|_{L^2}^2\notag\\
&\leq&C e^{\alpha t}\|\bm{u}\|_{L^2}^2\|\nabla\bm{u}\|_{L^2}^2\|\nabla\hat{\Theta}\|_{L^2}^2+C e^{\alpha t}\|\nabla\bm{v}\|_{L^2}^2+C e^{\alpha t}\|\nabla\hat{\Theta}\|_{L^2}^2\notag\\
&\leq&C e^{\alpha t}\|\bm{u}\|_{L^2}^2\|\nabla\bm{u}\|_{L^2}^2\|\nabla\hat{\Theta}\|_{L^2}^2+C e^{\alpha t}\|\nabla\bm{v}\|_{L^2}^2+C e^{\alpha t}\|\nabla\theta\|_{L^2}^2.
\een
Adopting the Gronwall inequality, in line with \eqref{gronwall1} and Lemma \ref{setacompare}, we net
\ben\label{da-seta1113}
e^{\alpha t}(\|\nabla\hat{\Theta}\|_{L^2}^2+\|\nabla\theta\|_{L^2}^2)+\frac{1}{\widetilde{M}}\int_0^te^{\alpha \tau}(\|\hat{\Theta}_\tau\|_{L^2}^2+\|\theta_\tau\|_{L^2}^2){\rm d}\tau
\leq C.
\een

Multiplying \eqref{da-seta8} with $e^{\alpha t}$ and integrating in time over $[0,t]$ lead to
\ben\label{da-seta17}
\int_0^te^{\alpha\tau}\|\nabla^2\hat{\Theta}\|_{L^2}^2{\rm d}\tau
&\leq&C\int_0^te^{\alpha\tau}\|\hat{\Theta}_\tau\|_{L^2}^2{\rm d}\tau+C\int_0^te^{-2\alpha\tau}\|\nabla\bm{u}\|_{L^2}^2{\rm d}\tau
+C\int_0^te^{\alpha\tau}\|\nabla\bm{v}\|_{L^2}^2{\rm d}\tau\notag\\
&\leq&C,
\een
here \eqref{gronwall2} and \eqref{da-seta1113} exert an influence on it.

By virtue of \eqref{da-seta}, we know
\beno
\partial_i\partial_j\theta
=\frac{\kappa(\theta)\partial_i\partial_j\hat{\Theta}-\kappa'(\theta)\partial_j\theta\partial_i\hat{\Theta}}{[\kappa(\theta)]^2}
=\frac{1}{\kappa(\theta)}\partial_i\partial_j\hat{\Theta}-\frac{1}{[\kappa(\theta)]^3}\kappa'(\theta)\partial_j\hat{\Theta}\partial_i\hat{\Theta},
\eeno
which indicates that
\ben\label{H2-4}
\|\partial_i\partial_j\theta\|_{L^2}
\leq\sigma\|\partial_i\partial_j\hat{\Theta}\|_{L^2}+\sigma^3\widetilde{M}\|\partial_i\hat{\Theta}\|_{L^4}\|\partial_j\hat{\Theta}\|_{L^4}
\leq C(\|\nabla^2\hat{\Theta}\|_{L^2}+\|\nabla\hat{\Theta}\|_{L^2}\|\nabla^2\hat{\Theta}\|_{L^2}+\|\nabla\hat{\Theta}\|_{L^2}^2).
\een
The equation \eqref{H2-4} is summed over $i,j$ and squared to give
\ben\label{H2-5}
\|\nabla^2\theta\|_{L^2}^2
&\leq&C(\|\nabla^2\hat{\Theta}\|_{L^2}^2+\|\nabla\hat{\Theta}\|_{L^2}^2\|\nabla^2\hat{\Theta}\|_{L^2}^2+\|\nabla\hat{\Theta}\|_{L^2}^4),
\een
which implies
\beno
\int_0^te^{\alpha\tau}\|\nabla^2\theta\|_{L^2}^2{\rm d}\tau
\leq C.
\eeno
From what has been discussed above, we draw a conclusion that
\beno
\int_0^te^{\alpha\tau}(\|\theta_\tau\|_{L^2}^2+\|\hat{\Theta}_\tau\|_{L^2}^2){\rm d}\tau+
\int_0^te^{\alpha\tau}(\|\nabla^2\theta\|_{L^2}^2+\|\nabla^2\hat{\Theta}\|_{L^2}^2){\rm d}\tau
\leq C,
\eeno
where $C=C(\|\bm{u}_0\|_{L^2},\|\bm{v}_0\|_{L^2},\|\theta_0\|_{L^2},\|\nabla\theta_0\|_{L^2},\widetilde{M},\sigma,\alpha).$
This completes the proof of Proposition \ref{seta-L2H2}.
\end{proof}

\begin{proposition}\label{uv-H2}
Under the presumptions of Proposition \ref{weak1}, then for any $t>0$, it holds
\beno
\|\nabla\bm{u}\|_{L^2}^2+\|\nabla\bm{v}\|_{L^2}^2\leq Ce^{-\alpha t}
\eeno
and
\beno
\int_0^te^{\alpha\tau}(\|\bm{u}_\tau\|_{L^2}^2+\|\bm{v}_\tau\|_{L^2}^2){\rm d}\tau+\int_0^te^{\alpha\tau}(\|\nabla^2\bm{u}\|_{L^2}^2+\|\nabla^2\bm{v}\|_{L^2}^2){\rm d}\tau
\leq C,
\eeno
where $C=C(\|\bm{u}_0\|_{H^1},\|\bm{v}_0\|_{H^1},\|\theta_0\|_{H^1},\widetilde{M},\sigma,\alpha).$
\end{proposition}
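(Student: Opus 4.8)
The plan is to run an $H^1$-energy estimate by testing the first equation of \eqref{3D} with $\bm{u}_t$ and the second with $\bm{v}_t$, and then to upgrade the resulting bound to exponential decay by the weighted Gronwall argument already employed in Proposition \ref{seta-L2H2}. After integrating by parts (using $\bm{u}_t|_{\partial\Omega}=\bm{v}_t|_{\partial\Omega}=0$), the viscous terms produce $\tfrac12\frac{d}{dt}\int_\Omega(\mu(\theta)|\nabla\bm{u}|^2+\nu(\theta)|\nabla\bm{v}|^2)\,\mathrm{d}\bm{x}$, which by \eqref{yita-22} is comparable to $\frac{d}{dt}(\|\nabla\bm{u}\|_{L^2}^2+\|\nabla\bm{v}\|_{L^2}^2)$, together with the dissipation $\|\bm{u}_t\|_{L^2}^2+\|\bm{v}_t\|_{L^2}^2$. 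The pressure drops out since $\int_\Omega\nabla p\cdot\bm{u}_t\,\mathrm{d}\bm{x}=-\int_\Omega p\,\partial_t(\nabla\cdot\bm{u})\,\mathrm{d}\bm{x}=0$. On the right-hand side I would collect the commutator terms $\tfrac12\int_\Omega\mu'(\theta)\theta_t|\nabla\bm{u}|^2$ and its $\nu$-analogue (from differentiating the variable coefficients), the convective terms $(\bm{u}\cdot\nabla)\bm{u}\cdot\bm{u}_t$ and $(\bm{u}\cdot\nabla)\bm{v}\cdot\bm{v}_t$, the coupling terms $\nabla\cdot(\bm{v}\otimes\bm{v})\cdot\bm{u}_t$ and $(\bm{v}\cdot\nabla)\bm{u}\cdot\bm{v}_t$, and the temperature forcing $\nabla\theta\cdot\bm{v}_t$.

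Next I would estimate each term by H\"older's inequality and the Gagliardo--Nirenberg inequality of Lemma \ref{P1}. The products $\|\nabla\bm{u}\|_{L^4}^2$, $\|\bm{u}\|_{L^4}\|\nabla\bm{u}\|_{L^4}$, $\|\bm{v}\|_{L^4}\|\nabla\bm{v}\|_{L^4}$ interpolate to bring in $\|\nabla^2\bm{u}\|_{L^2}$ and $\|\nabla^2\bm{v}\|_{L^2}$, e.g.\ $\|\nabla\bm{u}\|_{L^4}^2\leq C\|\nabla\bm{u}\|_{L^2}\|\nabla^2\bm{u}\|_{L^2}+C\|\nabla\bm{u}\|_{L^2}^2$; the forcing $\nabla\theta\cdot\bm{v}_t$ is bounded by $\tfrac18\|\bm{v}_t\|_{L^2}^2+C\|\nabla\theta\|_{L^2}^2$ and the viscous commutator by $C\widetilde{M}\|\theta_t\|_{L^2}\|\nabla\bm{u}\|_{L^4}^2$, using \eqref{yita-22}.

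The key device is then to control the second-order norms through elliptic regularity. For $\bm{u}$ I would rewrite the momentum equation as the non-divergence Stokes system $-\mu(\theta)\Delta\bm{u}+\nabla p=\bm{f}_1$ with $\bm{f}_1=-\bm{u}_t-(\bm{u}\cdot\nabla)\bm{u}+\mu'(\theta)\nabla\theta\cdot\nabla\bm{u}-\nabla\cdot(\bm{v}\otimes\bm{v})$, and apply Lemma \ref{nodiv} together with \eqref{pL2} (the latter to bound $\|p\|_{L^2}$ by $\|\bm{u}_t\|_{L^2}$ and lower-order quantities), obtaining $\|\nabla^2\bm{u}\|_{L^2}$ in terms of $\|\bm{u}_t\|_{L^2}$ plus interpolated lower-order terms; for $\bm{v}$ I would view $-\nu(\theta)\Delta\bm{v}=\bm{f}_2$ with $\bm{f}_2=-\bm{v}_t-(\bm{u}\cdot\nabla)\bm{v}+\nu'(\theta)\nabla\theta\cdot\nabla\bm{v}-\nabla\theta-(\bm{v}\cdot\nabla)\bm{u}$ and use Remark \ref{tidu} (so $\|\nabla^2\bm{v}\|_{L^2}\leq C\|\Delta\bm{v}\|_{L^2}$, with $\nu(\theta)^{-1}\leq\sigma$). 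Since $\|\nabla^2\bm{u}\|_{L^2}$ and $\|\nabla^2\bm{v}\|_{L^2}$ reappear on both sides, the main obstacle is precisely to resolve this circular dependence: by Young's inequality one absorbs the $\|\bm{u}_t\|_{L^2}^2+\|\bm{v}_t\|_{L^2}^2$ and the higher-order parts $\tfrac12(\|\nabla^2\bm{u}\|_{L^2}^2+\|\nabla^2\bm{v}\|_{L^2}^2)$ into the left-hand side, which yields a closed inequality of the form
\beno
\frac{d}{dt}\big(\|\nabla\bm{u}\|_{L^2}^2+\|\nabla\bm{v}\|_{L^2}^2\big)+c\big(\|\bm{u}_t\|_{L^2}^2+\|\bm{v}_t\|_{L^2}^2+\|\nabla^2\bm{u}\|_{L^2}^2+\|\nabla^2\bm{v}\|_{L^2}^2\big)\leq G(t)\big(\|\nabla\bm{u}\|_{L^2}^2+\|\nabla\bm{v}\|_{L^2}^2\big)+H(t),
\eeno
where $G(t)=C(\|\theta_t\|_{L^2}^2+\|\bm{u}\|_{L^2}^2\|\nabla\bm{u}\|_{L^2}^2+\cdots)$ is integrable in time by Propositions \ref{weak2} and \ref{seta-L2H2}, and $H(t)$ is a sum of terms dominated by $\|\nabla\theta\|_{L^2}^2$ and $\|\nabla\bm{v}\|_{L^2}^2$.

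Finally, to extract the decay I would multiply this inequality by $e^{\alpha t}$ with the same $\alpha$ as in Proposition \ref{weak2} and set $W(t):=e^{\alpha t}(\|\nabla\bm{u}\|_{L^2}^2+\|\nabla\bm{v}\|_{L^2}^2)$. The dangerous linear term $\alpha e^{\alpha t}(\|\nabla\bm{u}\|_{L^2}^2+\|\nabla\bm{v}\|_{L^2}^2)$ generated by the weight, together with any remaining constant-coefficient multiples of $\|\nabla\bm{u}\|_{L^2}^2$ and the $e^{\alpha\tau}H(\tau)$ contributions, is \emph{not} kept proportional to $W$; instead I would move all of these to the source side and bound their time integrals by $\int_0^t e^{\alpha\tau}(\|\nabla\bm{u}\|_{L^2}^2+\|\nabla\bm{v}\|_{L^2}^2+\|\nabla\theta\|_{L^2}^2)\,\mathrm{d}\tau\leq C$ from Proposition \ref{weak2} and $\int_0^t e^{\alpha\tau}\|\theta_\tau\|_{L^2}^2\,\mathrm{d}\tau\leq C$ from Proposition \ref{seta-L2H2}. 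Since only the genuinely integrable coefficient $G$ then multiplies $W$, Gronwall's inequality gives $W(t)\leq C$ (the constant carrying $W(0)=\|\nabla\bm{u}_0\|_{L^2}^2+\|\nabla\bm{v}_0\|_{L^2}^2$), that is $\|\nabla\bm{u}\|_{L^2}^2+\|\nabla\bm{v}\|_{L^2}^2\leq Ce^{-\alpha t}$, and simultaneously the weighted bound $\int_0^t e^{\alpha\tau}(\|\bm{u}_\tau\|_{L^2}^2+\|\bm{v}_\tau\|_{L^2}^2+\|\nabla^2\bm{u}\|_{L^2}^2+\|\nabla^2\bm{v}\|_{L^2}^2)\,\mathrm{d}\tau\leq C$, which is exactly the assertion of Proposition \ref{uv-H2}.
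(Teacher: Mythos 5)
Your proposal is correct and follows essentially the same route as the paper's proof: testing the momentum equations with $(\bm{u}_t,\bm{v}_t)$, Gagliardo--Nirenberg interpolation, the Stokes estimate of Lemma \ref{nodiv} together with \eqref{pL2} for $\bm{u}$ and the Dirichlet elliptic estimate for $\bm{v}$ written in non-divergence form to express $\|\nabla^2\bm{u}\|_{L^2},\|\nabla^2\bm{v}\|_{L^2}$ through $\|\bm{u}_t\|_{L^2},\|\bm{v}_t\|_{L^2}$ and lower-order quantities, and finally a weighted Gronwall argument in which the term $\alpha e^{\alpha t}(\|\nabla\bm{u}\|_{L^2}^2+\|\nabla\bm{v}\|_{L^2}^2)$ produced by the weight is moved to the source side and controlled by the bound $\int_0^t e^{\alpha\tau}(\|\nabla\bm{u}\|_{L^2}^2+\|\nabla\bm{v}\|_{L^2}^2)\,{\rm d}\tau\leq C$ of Proposition \ref{weak2}. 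If anything, your bookkeeping is more careful than the paper's: you retain the commutator terms $\tfrac12\int_\Omega\mu'(\theta)\theta_t|\nabla\bm{u}|^2\,{\rm d}\bm{x}$ (and the $\nu$-analogue) arising from the variable coefficients under the time derivative, as well as the standalone source $C\|\nabla\theta\|_{L^2}^2$ from the forcing term, both of which \eqref{uvH21} and \eqref{houjiade} silently drop, and you dispose of them correctly via the time-integrability supplied by Propositions \ref{weak2} and \ref{seta-L2H2}.
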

\begin{proof}
Taking inner product with the first two equations of $\eqref{3D}$ by $(\bm{u}_t,\bm{v}_t)$ respectively and integrating by parts, we get
\ben\label{uvH21}
&&\frac{1}{2\sigma}\frac{{\rm d}}{{\rm d}t}(\|\nabla\bm{u}\|_{L^2}^2+\|\nabla\bm{v}\|_{L^2}^2)+\|\bm{u}_t\|_{L^2}^2+\|\bm{v}_t\|_{L^2}^2\notag\\
&\leq&-\int_\Omega\bm{u}_t\cdot(\bm{u}\cdot\nabla)\bm{u}{\rm d}\bm{x}
-\int_\Omega\nabla\cdot(\bm{v}\otimes\bm{v})\cdot\bm{u}_t{\rm d}\bm{x}
-\int_\Omega(\bm{u}\cdot\nabla)\bm{v}\cdot\bm{v}_t{\rm d}\bm{x}\notag\\
&&-\int_\Omega\nabla\theta\cdot\bm{v}_t{\rm d}\bm{x}
-\int_\Omega(\bm{v}\cdot\nabla)\bm{u}\cdot\bm{v}_t{\rm d}\bm{x}.
\een
In line with \eqref{gronwall1} and Lemma \ref{P1}, we obtain
\ben\label{uvH24}
&&|-\int_\Omega\bm{u}_t\cdot(\bm{u}\cdot\nabla)\bm{u}{\rm d}\bm{x}|
\leq C\|\bm{u}_t\|_{L^2}\|\bm{u}\|_{L^4}\|\nabla\bm{u}\|_{L^4}\notag\\
&\leq&C\|\bm{u}_t\|_{L^2}\|\bm{u}\|_{L^2}^{\frac{1}{2}}\|\nabla\bm{u}\|_{L^2}^{\frac{1}{2}}(\|\nabla\bm{u}\|_{L^2}^{\frac{1}{2}}\|\nabla^2\bm{u}\|_{L^2}^{\frac{1}{2}}+\|\nabla\bm{u}\|_{L^2})\notag\\
&\leq&C\|\bm{u}_t\|_{L^2}\|\nabla\bm{u}\|_{L^2}\|\nabla^2\bm{u}\|_{L^2}^{\frac{1}{2}}+C\|\bm{u}_t\|_{L^2}\|\nabla\bm{u}\|_{L^2}^{\frac{3}{2}}.
\een
Likewise, it has
\ben\label{uvH25}
&&|-\int_\Omega\nabla\cdot(\bm{v}\otimes\bm{v})\cdot\bm{u}_t{\rm d}\bm{x}|
\leq C\|\bm{u}_t\|_{L^2}\|\bm{v}\|_{L^4}\|\nabla\bm{v}\|_{L^4}\notag\\
&\leq&C\|\bm{u}_t\|_{L^2}\|\bm{v}\|_{L^2}^{\frac{1}{2}}\|\nabla\bm{v}\|_{L^2}^{\frac{1}{2}}(\|\nabla\bm{v}\|_{L^2}^{\frac{1}{2}}\|\nabla^2\bm{v}\|_{L^2}^{\frac{1}{2}}+\|\nabla\bm{v}\|_{L^2})\notag\\
&\leq&C\|\bm{u}_t\|_{L^2}\|\nabla\bm{v}\|_{L^2}\|\nabla^2\bm{v}\|_{L^2}^{\frac{1}{2}}+C\|\bm{u}_t\|_{L^2}\|\nabla\bm{v}\|_{L^2}^{\frac{3}{2}}.
\een
Putting $\eqref{uvH24},\,\eqref{uvH25}$ into \eqref{uvH21}, we can deduce
\beno
&&\frac{1}{2\sigma}\frac{{\rm d}}{{\rm d}t}(\|\nabla\bm{u}\|_{L^2}^2+\|\nabla\bm{v}\|_{L^2}^2)+\|\bm{u}_t\|_{L^2}^2+\|\bm{v}_t\|_{L^2}^2\\
&\leq&C\|\bm{u}_t\|_{L^2}\|\nabla\bm{u}\|_{L^2}\|\nabla^2\bm{u}\|_{L^2}^{\frac{1}{2}}+C\|\bm{u}_t\|_{L^2}\|\nabla\bm{u}\|_{L^2}^{\frac{3}{2}}
+C\|\bm{u}_t\|_{L^2}\|\nabla\bm{v}\|_{L^2}\|\nabla^2\bm{v}\|_{L^2}^{\frac{1}{2}}+C\|\bm{u}_t\|_{L^2}\|\nabla\bm{v}\|_{L^2}^{\frac{3}{2}}\\
&&+C\|\bm{v}_t\|_{L^2}\|\nabla\bm{u}\|_{L^2}^{\frac{1}{2}}\|\nabla\bm{v}\|_{L^2}^{\frac{1}{2}}\|\nabla^2\bm{v}\|_{L^2}^{\frac{1}{2}}
+C\|\bm{v}_t\|_{L^2}\|\nabla\bm{u}\|_{L^2}^{\frac{1}{2}}\|\nabla\bm{v}\|_{L^2}+C\|\bm{v}_t\|_{L^2}\|\nabla\bm{v}\|_{L^2}^{\frac{1}{2}}\|\nabla\bm{u}\|_{L^2}^{\frac{1}{2}}\|\nabla^2\bm{u}\|_{L^2}^{\frac{1}{2}}\\
&&+C\|\bm{v}_t\|_{L^2}\|\nabla\bm{v}\|_{L^2}^{\frac{1}{2}}\|\nabla\bm{u}\|_{L^2}+C\|\nabla\theta\|_{L^2}\|\bm{v}_t\|_{L^2}\\
&\leq&\frac{1}{2}(\|\bm{u}_t\|_{L^2}^2+\|\bm{v}_t\|_{L^2}^2)
+C(\|\nabla\bm{u}\|_{L^2}^2+\|\nabla\bm{v}\|_{L^2}^2)
(\|\nabla^2\bm{u}\|_{L^2}+\|\nabla^2\bm{v}\|_{L^2}
+\|\nabla\theta\|_{L^2}^2+\|\nabla\bm{u}\|_{L^2}+\|\nabla\bm{v}\|_{L^2}),
\eeno
which suggests
\ben\label{uvH27}
&&\frac{1}{\sigma}\frac{{\rm d}}{{\rm d}t}(\|\nabla\bm{u}\|_{L^2}^2+\|\nabla\bm{v}\|_{L^2}^2)+\|\bm{u}_t\|_{L^2}^2+\|\bm{v}_t\|_{L^2}^2\notag\\
&\leq&C(\|\nabla\bm{u}\|_{L^2}^2+\|\nabla\bm{v}\|_{L^2}^2)
(\|\nabla^2\bm{u}\|_{L^2}+\|\nabla^2\bm{v}\|_{L^2}+\|\nabla\theta\|_{L^2}^2
+\|\nabla\bm{u}\|_{L^2}+\|\nabla\bm{v}\|_{L^2}).
\een

The next step is to deal with $\|\nabla^2\bm{u}\|_{L^2}$ and $\|\nabla^2\bm{v}\|_{L^2}$.
According to Lemma \ref{nodiv}, \eqref{pL2} and Poincar\'{e} inequality, for $\eqref{3D}_1$, we have
\ben\label{uvH28}
&&\|\nabla^2\bm{u}\|_{L^2}+\|\nabla p\|_{L^2}\notag\\
&\leq&C(\|\bm{u}_t\|_{L^2}+\|\bm{u}\cdot\nabla\bm{u}\|_{L^2}+\|\nabla\cdot(\bm{v}\otimes\bm{v})\|_{L^2}
+\|\nabla\theta\cdot\nabla\bm{u}\|_{L^2}+\|\bm{u}\|_{H^1})\notag\\
&\leq&C(\|\bm{u}_t\|_{L^2}+\|\bm{u}\|_{L^4}\|\nabla\bm{u}\|_{L^4}
+\|\bm{v}\|_{L^4}\|\nabla\bm{v}\|_{L^4}
+\|\nabla\theta\|_{L^4}\|\nabla\bm{u}\|_{L^4}+\|\nabla\bm{u}\|_{L^2})\notag\\
&\leq&C\Big[\|\bm{u}_t\|_{L^2}+\|\bm{u}\|_{L^2}^{\frac{1}{2}}\|\nabla\bm{u}\|_{L^2}^{\frac{1}{2}}(\|\nabla\bm{u}\|_{L^2}^{\frac{1}{2}}\|\nabla^2\bm{u}\|_{L^2}^{\frac{1}{2}}+\|\nabla\bm{u}\|_{L^2})
+\|\bm{v}\|_{L^2}^{\frac{1}{2}}\|\nabla\bm{v}\|_{L^2}^{\frac{1}{2}}(\|\nabla\bm{v}\|_{L^2}^{\frac{1}{2}}\|\nabla^2\bm{v}\|_{L^2}^{\frac{1}{2}}+\|\nabla\bm{v}\|_{L^2})\notag\\
&&+(\|\nabla\theta\|_{L^2}^{\frac{1}{2}}\|\nabla^2\theta\|_{L^2}^{\frac{1}{2}}+\|\nabla\theta\|_{L^2})(\|\nabla\bm{u}\|_{L^2}^{\frac{1}{2}}\|\nabla^2\bm{u}\|_{L^2}^{\frac{1}{2}}+\|\nabla\bm{u}\|_{L^2})
+\|\nabla\bm{u}\|_{L^2}\Big]\notag\\
&\leq&\frac{1}{2}\|\nabla^2\bm{u}\|_{L^2}+\frac{1}{4}\|\nabla^2\bm{v}\|_{L^2}
+C(\|\bm{u}_t\|_{L^2}+\|\nabla\bm{u}\|_{L^2}^2+\|\nabla\bm{v}\|_{L^2}^2+\|\nabla\theta\|_{L^2}\|\nabla^2\theta\|_{L^2}\|\nabla\bm{u}\|_{L^2}\notag\\
&&+\|\nabla\theta\|_{L^2}^2\|\nabla\bm{u}\|_{L^2}),
\een
which implies that
\ben\label{uvH29}
\|\nabla^2\bm{u}\|_{L^2}
\leq\frac{1}{2}\|\nabla^2\bm{v}\|_{L^2}
+C(\|\bm{u}_t\|_{L^2}+\|\nabla\bm{u}\|_{L^2}^2+\|\nabla\bm{v}\|_{L^2}^2
+\|\nabla\theta\|_{L^2}\|\nabla^2\theta\|_{L^2}\|\nabla\bm{u}\|_{L^2}
+\|\nabla\theta\|_{L^2}^2\|\nabla\bm{u}\|_{L^2}).
\een
Write the equation of $\bm{v}$ as
\ben\label{uvH210}
-\nu(\theta)\Delta\bm{v}=\nu'(\theta)\nabla\theta\cdot\nabla\bm{v}-\bm{v}_t-(\bm{u}\cdot\nabla)\bm{v}-\nabla\theta-(\bm{v}\cdot\nabla)\bm{u}.
\een
Recalling Lemma \ref{wup} and \eqref{yita-22}, we arrive at
\beno
&&\|\nabla^2\bm{v}\|_{L^2}\\
&\leq&C(\|\nabla\theta\cdot\nabla\bm{v}\|_{L^2}+\|\bm{v}_t\|_{L^2}+\|\bm{u}\cdot\nabla\bm{v}\|_{L^2}+\|\nabla\theta\|_{L^2}+\|\bm{v}\cdot\nabla\bm{u}\|_{L^2})\\
&\leq&C(\|\nabla\theta\|_{L^4}\|\nabla\bm{v}\|_{L^4}+\|\bm{v}_t\|_{L^2}+\|\bm{u}\|_{L^4}\|\nabla\bm{v}\|_{L^4}+\|\nabla\theta\|_{L^2}+\|\bm{v}\|_{L^4}\|\nabla\bm{u}\|_{L^4})\\
&\leq&C(\|\nabla\theta\|_{L^2}^{\frac{1}{2}}\|\nabla^2\theta\|_{L^2}^{\frac{1}{2}}+\|\nabla\theta\|_{L^2})(\|\nabla\bm{v}\|_{L^2}^{\frac{1}{2}}\|\nabla^2\bm{v}\|_{L^2}^{\frac{1}{2}}+\|\nabla\bm{v}\|_{L^2})
+C\|\bm{v}_t\|_{L^2}\\
&&+C\|\bm{u}\|_{L^2}^{\frac{1}{2}}\|\nabla\bm{u}\|_{L^2}^{\frac{1}{2}}(\|\nabla\bm{v}\|_{L^2}^{\frac{1}{2}}\|\nabla^2\bm{v}\|_{L^2}^{\frac{1}{2}}+\|\nabla\bm{v}\|_{L^2})+C\|\nabla\theta\|_{L^2}
+C\|\bm{v}\|_{L^2}^{\frac{1}{2}}\|\nabla\bm{v}\|_{L^2}^{\frac{1}{2}}\\
&&\times(\|\nabla\bm{u}\|_{L^2}^{\frac{1}{2}}\|\nabla^2\bm{u}\|_{L^2}^{\frac{1}{2}}+\|\nabla\bm{u}\|_{L^2})\\
&\leq&\frac{1}{2}\|\nabla^2\bm{v}\|_{L^2}+\frac{1}{2}\|\nabla^2\bm{u}\|_{L^2}+C(\|\bm{v}_t\|_{L^2}+\|\nabla\bm{u}\|_{L^2}^2+\|\nabla\bm{v}\|_{L^2}^2+\|\nabla\theta\|_{L^2}\|\nabla^2\theta\|_{L^2}\|\nabla\bm{v}\|_{L^2}\\
&&+\|\nabla\theta\|_{L^2}^2\|\nabla\bm{v}\|_{L^2}+\|\nabla\theta\|_{L^2}),
\eeno
which suggests that
\beno
\|\nabla^2\bm{v}\|_{L^2}
\leq\|\nabla^2\bm{u}\|_{L^2}+C(\|\bm{v}_t\|_{L^2}+\|\nabla\bm{u}\|_{L^2}^2+\|\nabla\bm{v}\|_{L^2}^2
+\|\nabla\theta\|_{L^2}\|\nabla^2\theta\|_{L^2}\|\nabla\bm{v}\|_{L^2}+\|\nabla\theta\|_{L^2}^2\|\nabla\bm{v}\|_{L^2}+\|\nabla\theta\|_{L^2}).
\eeno
Recalling \eqref{uvH29}, we get
\begin{align}\label{uvH211}
\|\nabla^2\bm{v}\|_{L^2}&\leq C(\|\bm{u}_t\|_{L^2}+\|\bm{v}_t\|_{L^2}+\|\nabla\bm{u}\|_{L^2}^2+\|\nabla\bm{v}\|_{L^2}^2
+\|\nabla\theta\|_{L^2}\|\nabla^2\theta\|_{L^2}\|\nabla\bm{u}\|_{L^2}+\|\nabla\theta\|_{L^2}\|\nabla^2\theta\|_{L^2}\|\nabla\bm{v}\|_{L^2}\notag\\
&+\|\nabla\theta\|_{L^2}^2\|\nabla\bm{u}\|_{L^2}+\|\nabla\theta\|_{L^2}^2\|\nabla\bm{v}\|_{L^2}
+\|\nabla\theta\|_{L^2}).
\end{align}
Putting \eqref{uvH29}, \eqref{uvH211} into \eqref{uvH27}, it holds
\ben\label{houjiade}
&&\frac{{\rm d}}{{\rm d}t}(\|\nabla\bm{u}\|_{L^2}^2+\|\nabla\bm{v}\|_{L^2}^2)+\frac{\sigma}{2}(\|\bm{u}_t\|_{L^2}^2+\|\bm{v}_t\|_{L^2}^2)\notag\\
&\leq& C(\|\nabla\bm{u}\|_{L^2}^2+\|\nabla\bm{v}\|_{L^2}^2)(\|\nabla\bm{u}\|_{L^2}^2+\|\nabla\bm{v}\|_{L^2}^2
+\|\nabla\theta\|_{L^2}^2\|\nabla^2\theta\|_{L^2}^2+\|\nabla\theta\|_{L^2}^4).
\een

Multiplying \eqref{houjiade} by $e^{\alpha t}$ and taking advantage of Gronwall inequality, we deduce
\ben\label{houjiade3}
e^{\alpha t}(\|\nabla\bm{u}\|_{L^2}^2+\|\nabla\bm{v}\|_{L^2}^2)+\frac{\sigma}{2}
\int_0^te^{\alpha\tau}(\|\bm{u}_\tau\|_{L^2}^2+\|\bm{v}_\tau\|_{L^2}^2){\rm d}\tau
\leq C,
\een
where $C=C(\|\bm{u}_0\|_{H^1},\|\bm{v}_0\|_{H^1},\|\theta_0\|_{H^1},\widetilde{M},\sigma,\alpha).$

Recalling \eqref{uvH29} and \eqref{uvH211}, it is easy to get
\beno
\int_0^te^{\alpha\tau}(\|\nabla^2\bm{u}\|_{L^2}^2+\|\nabla^2\bm{v}\|_{L^2}^2){\rm d}\tau\leq C.
\eeno
\end{proof}

\begin{proof}[\bf Proof of Part \textbf{(a)} of Theorem \ref{T1}:]
The proof is a consequence of Schauder's fixed point theorem.

To define the functional setting, we fix $T>0$ and $R_0$, which can
be specified later. For notational convenience, we
write
$$
X\equiv C(0,T; L^2)\cap L^2(0,T; H_0^1)
$$
with $\|f\|_X\equiv \|f\|_{C(0,T; L^2)}+\|f\|_{L^2(0,T; H_0^1)}$
and define
$$
D=\{f\in X\,|\,\|f\|_X\leq R_0\}.
$$
Obviously, $D\subset X$ is closed and convex.

We fix $\epsilon\in(0, 1)$ and define a continuous map on $D$. For any $f\in D$, we regularize it and the initial data $(\bm{u}_0, \bm{v}_0, \theta_0)$ via the
standard mollifying process,
$$
f^{\epsilon}= \rho^{\epsilon}\ast f, \quad \bm{u}_{0}^{\epsilon} = \rho^{\epsilon}\ast \bm{u}_0, \quad \bm{v}_{0}^{\epsilon} = \rho^{\epsilon}\ast \bm{v}_0,
\quad \theta_{0}^{\epsilon} = \rho^{\epsilon}\ast \theta_0,
$$
where $\rho^{\epsilon}$ is the standard mollifier. Then it holds that
$$f^{\epsilon}\in C(0,T;C_0^\infty(\Omega)), \|f^{\epsilon}\|_X\leq\|f\|_X,$$
$$\bm{u}_{0}^{\epsilon}\in C_0^\infty(\Omega), \nabla\cdot\bm{u}_{0}^{\epsilon}=0, \|\bm{u}_{0}^{\epsilon}-\bm{u}_{0}\|_{H^1}<\epsilon,$$
$$\bm{v}_{0}^{\epsilon}\in C_0^\infty(\Omega), \|\bm{v}_{0}^{\epsilon}-\bm{v}_{0}\|_{H^1}<\epsilon,$$
$$\theta_0^{\epsilon}\in C_0^\infty(\Omega),\|\theta_0^{\epsilon}-\theta_0\|_{H^1}<\epsilon.$$

According to Lemma \ref{fixed}, the following system with smooth external forcing $\nabla f^{\epsilon}$ and smooth initial data $(\bm{u}_{0}^{\epsilon}, \bm{v}_0^{\epsilon})$
\begin{equation}\label{molify}
\left\{\begin{array}{ll}
\bm{u}_t+(\bm{u}\cdot\nabla)\bm{u}-\nabla\cdot(\mu(f^{\epsilon})\nabla\bm{u})+\nabla p=-\nabla\cdot(\bm{v}\otimes\bm{v}),\\
\bm{v}_t+(\bm{u}\cdot\nabla)\bm{v}-\nabla\cdot(\nu(f^{\epsilon})\nabla\bm{v})=-\nabla f^{\epsilon}-(\bm{v}\cdot\nabla)\bm{u},\\
\nabla\cdot\bm{u}=0,\,\bm{u}|_{\p\Omega}=\bm{v}|_{\p\Omega}=0,\\
\bm{u}(\bm{x},0)=\bm{u}_{0}^{\epsilon}, \bm{v}(\bm{x},0)=\bm{v}_{0}^{\epsilon},
\end{array}\right.
\end{equation}
has a unique smooth solution $\bm{u}^{\epsilon}, \bm{v}^{\epsilon}$.
At present, $\bm{u}^{\epsilon}$ and $\bm{v}^{\epsilon}$ are known functions, then we can solve the following linear parabolic equation with the smooth initial data $\theta_0^{\epsilon}$
\begin{equation}\label{vsmooth}
\left\{\begin{array}{ll}
\theta_t+(\bm{u}^{\epsilon}\cdot\nabla)\theta-\nabla\cdot(\kappa(f^{\epsilon})\nabla\theta)=-\nabla\cdot \bm{v}^{\epsilon},\\
\theta|_{\p\Omega}=0,\\
\theta(\bm{x},0)=\theta_0^{\epsilon},
\end{array}\right.
\end{equation}
and denote the solution by $\theta^{\epsilon}$. Based on the above results, we can define the mapping as
\beno
F^{\epsilon}(f)=\theta^{\epsilon}.
\eeno

To make use of Schauder's fixed point theorem, the next step is to verify that $F^{\epsilon}$ meets the conditions that for any fixed $\epsilon\in(0,1), F^{\epsilon}:D\rightarrow D$ is continuous and compact. Specifically, we need to confirm
\begin{enumerate}
\item[(a)] $\|\theta^{\epsilon}\|_{X}\leq R_0,\,\,\forall f\in D$;
\item[(b)] $\|\theta^{\epsilon}\|_{C(0,T;H_0^1)}+\|\theta^{\epsilon}\|_{L^2(0,T; H^2)}\leq C,\,\,\forall f\in D$;
\item[(c)]For any $\eta>0$, there exists $\delta=\delta(\eta)>0$ such that for any $f_1, f_2\in D$ with $\|f_1-f_2\|_X<\delta$, it follows that $\|F^{\epsilon}(f_1)-F^{\epsilon}(f_2)\|_X<\eta.$
\end{enumerate}

We verify (a) first. Taking inner product of the first two equations of \eqref{molify} with $(\bm{u}, \bm{v})$ respectively, by integration by parts, it leads to
\beno
&&\frac{1}{2}\frac{{\rm d}}{{\rm d}t}(\|\bm{u}(t)\|_{L^2}^2+\|\bm{v}(t)\|_{L^2}^2)
+\frac{1}{\sigma}(\|\nabla\bm{u}\|_{L^2}^2+\|\nabla\bm{v}\|_{L^2}^2)\\
&\leq&-\int_\Omega\nabla\cdot(\bm{v}\otimes\bm{v})\cdot\bm{u}{\rm d}\bm{x}-\int_\Omega(\bm{v}\cdot\nabla)\bm{u}\cdot\bm{v}{\rm d}\bm{x}-\int_\Omega\nabla f^{\epsilon}\cdot\bm{v}{\rm d}\bm{x}\\
&\leq&\frac{1}{2\sigma}\|\nabla\bm{v}\|_{L^2}^2+C\|f^{\epsilon}\|_{L^2}^2,
\eeno
that is,
\ben\label{aprove}
\frac{{\rm d}}{{\rm d}t}(\|\bm{u}(t)\|_{L^2}^2+\|\bm{v}(t)\|_{L^2}^2)
+\frac{2}{\sigma}\|\nabla\bm{u}\|_{L^2}^2+\frac{1}{\sigma}\|\nabla\bm{v}\|_{L^2}^2
\leq C\|f^{\epsilon}\|_{L^2}^2.
\een
After integration over $[0,T]$ for \eqref{aprove} , we attain
\ben\label{aprove1}
&&\|\bm{u}^{\epsilon}\|_{L^2}^2+\|\bm{v}^{\epsilon}\|_{L^2}^2
+\frac{2}{\sigma}\int_0^T\|\nabla\bm{u}^{\epsilon}\|_{L^2}^2{\rm d}t+\frac{1}{\sigma}\int_0^T\|\nabla\bm{v}^{\epsilon}\|_{L^2}^2{\rm d}t\notag\\
&\leq&\|\bm{u}_0^{\epsilon}\|_{L^2}^2+\|\bm{v}_0^{\epsilon}\|_{L^2}^2+C\int_0^T\|f^{\epsilon}\|_{L^2}^2{\rm d}t\notag\\
&\leq&\|\bm{u}_0\|_{L^2}^2+\|\bm{v}_0\|_{L^2}^2+CTR_0^2.
\een

Multiplying the first equation of $\eqref{vsmooth}$ by $\theta$, it follows
\beno
\frac{{\rm d}}{{\rm d}t}\|\theta(t)\|_{L^2}^2+\frac{1}{\sigma}\|\nabla\theta\|_{L^2}^2
\leq C\|\bm{v}^{\epsilon}\|_{L^2}^2,
\eeno
After integration over $[0,T]$ for \eqref{aprove} , we derive
\ben\label{aprove3}
&&\|\theta(t)\|_{L^2}^2+\frac{1}{\sigma}\int_0^T\|\nabla\theta\|_{L^2}^2{\rm d}t
\,\leq\|\theta_0\|_{L^2}^2+C\int_0^T\|\bm{v}^{\epsilon}\|_{L^2}^2{\rm d}t\notag\\
&\leq&\|\theta_0\|_{L^2}^2+CT(\|\bm{u}_0\|_{L^2}^2+\|\theta_0\|_{L^2}^2+CTR_0^2).
\een

To show that $F^\epsilon$ maps $D$ to $D$, it suffices to certify that the
right-hand side of \eqref{aprove3} is bounded by $R_0^2$.
Then we obtain a condition for $R_0$, namely
\beno
\|\theta_0\|_{L^2}^2+CT(\|\bm{u}_0\|_{L^2}^2+\|\theta_0\|_{L^2}^2+CTR_0^2)\leq R_0^2.
\eeno
Thus, provided that $T$ is sufficiently small, such that $CT^2\ll\frac{1}{2}$, the above inequality would hold. Similarly, we can deduce (b) and (c) when $T$ is sufficiently small.
Schauder's fixed point theorem gives us access to the existence of a solution on a finite time interval $T$. The uniform estimates in Proposition \ref{weak1}
would enable us to pass the limit to attain a weak solution $(\bm{u},\bm{v},\theta)$.

Based on the global bounds obtained in Proposition \ref{weak1}, the local solution obtained by Schauder's fixed point theorem can be easily extended into a global solution via
Picard type extension theorem. As a result, we are able to receive the desired
global weak solutions.
\end{proof}

\section{UNIQUE GLOBAL STRONG SOLUTION}
\label{proofT1}
\setcounter{equation}{0}
This section is devoted to acquiring the strong solution and its uniqueness to the system \eqref{3D}-\eqref{eq20}.

\begin{proposition}\label{sita-H2}
Let $\Omega\subset\mathbb{R}^2$ be a bounded domain with smooth boundary. Assuming that $\theta_0\in H_0^1\cap H^2,\\(\bm{u}_0,\bm{v}_0)\in H_0^1$, then for any solution $(\bm{u},\bm{v},\theta)$ of the system $\eqref{3D}-\eqref{eq20}$, for any $t>0$, it holds
\beno
\|\nabla^2\theta\|_{L^2}^2+\|\theta_t\|_{L^2}^2\leq Ce^{-\alpha t}
\eeno
and
\beno
\int_0^te^{\alpha\tau}\|\nabla\theta_\tau\|_{L^2}^2{\rm d}\tau+\int_0^te^{\alpha\tau}\|\theta\|_{H^3}^2{\rm d}\tau
\leq C,
\eeno
where $C=C(\|\theta_0\|_{H^2},\|\bm{u}_0\|_{H^1},\|\bm{v}_0\|_{H^1},\sigma,\alpha).$
\end{proposition}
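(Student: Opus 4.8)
The plan is to obtain the pointwise-in-time exponential decay of $\|\theta_t\|_{L^2}$ and $\|\nabla^2\theta\|_{L^2}$ from a \emph{time-differentiated} energy estimate, and then to read off the $H^3$ dissipation integral through elliptic regularity applied to the good unknown $\hat{\Theta}$ of \eqref{bigseta}--\eqref{da-seta1}. First I would differentiate the temperature equation in time and test with $\theta_t$. Since $\theta|_{\partial\Omega}=0$ forces $\theta_t|_{\partial\Omega}=0$ and $\nabla\cdot\bm{u}=0$, the transport contribution $\int_\Omega(\bm{u}\cdot\nabla)\theta_t\,\theta_t\dd\bm{x}$ drops out and the principal part produces the dissipation, leaving a differential inequality of the shape
\[
\frac{1}{2}\frac{\mathrm{d}}{\mathrm{d}t}\|\theta_t\|_{L^2}^2+\frac{1}{\widetilde{M}}\|\nabla\theta_t\|_{L^2}^2\le \Big|\int_\Omega\bm{u}_t\cdot\nabla\theta\,\theta_t\dd\bm{x}\Big|+\Big|\int_\Omega\kappa'(\theta)\theta_t\,\nabla\theta\cdot\nabla\theta_t\dd\bm{x}\Big|+\Big|\int_\Omega\bm{v}_t\cdot\nabla\theta_t\dd\bm{x}\Big|,
\]
where \eqref{yita-22} bounds $\kappa,\kappa'$ by $\widetilde{M}$. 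To launch the Gronwall argument I also need $\theta_t(0)\in L^2$; reading $\theta_t(0)=-\bm{u}_0\cdot\nabla\theta_0+\nabla\cdot(\kappa(\theta_0)\nabla\theta_0)-\nabla\cdot\bm{v}_0$ off the equation and using $\theta_0\in H^2$, $(\bm{u}_0,\bm{v}_0)\in H^1\hookrightarrow L^4$ in two dimensions, this is exactly where the hypothesis $\theta_0\in H^2$ is consumed.

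The decisive point is the first term, because $\bm{u}_t$ is controlled \emph{only} through the weighted integral bound $\int_0^t e^{\alpha\tau}\|\bm{u}_\tau\|_{L^2}^2\dd\tau\le C$ of Proposition \ref{uv-H2}, with no pointwise or $L_t^4$ information. A direct H\"older/Gagliardo--Nirenberg estimate would generate the uncontrolled quantity $\|\bm{u}_t\|_{L^2}^4$; I would avoid this by integrating by parts in space and exploiting $\nabla\cdot\bm{u}_t=\partial_t(\nabla\cdot\bm{u})=0$ together with $\theta|_{\partial\Omega}=0$, which gives
\[
\int_\Omega\bm{u}_t\cdot\nabla\theta\,\theta_t\dd\bm{x}=-\int_\Omega\theta\,\bm{u}_t\cdot\nabla\theta_t\dd\bm{x}\le M\|\bm{u}_t\|_{L^2}\|\nabla\theta_t\|_{L^2},
\]
where $M=\|\theta\|_{L^\infty}$ is the constant from Proposition \ref{seta-infinityest}. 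After Young's inequality this leaves only $C\|\bm{u}_t\|_{L^2}^2$, whose $e^{\alpha\tau}$-weighted integral is finite by Proposition \ref{uv-H2}. The last term is treated identically via $\|\bm{v}_t\|_{L^2}\|\nabla\theta_t\|_{L^2}$, and the $\kappa'$ term by Lemma \ref{P1}, using $\|\theta_t\|_{L^4}\le C\|\theta_t\|_{L^2}^{1/2}\|\nabla\theta_t\|_{L^2}^{1/2}$ and $\|\nabla\theta\|_{L^4}\le C\|\nabla\theta\|_{L^2}^{1/2}\|\nabla^2\theta\|_{L^2}^{1/2}+C\|\nabla\theta\|_{L^2}$, after which part of $\|\nabla\theta_t\|_{L^2}^2$ is absorbed into the dissipation.

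To remove the second derivatives of $\theta$ generated above I would invoke the good unknown: from \eqref{da-seta1} and Lemma \ref{wup} (as in Proposition \ref{seta-L2H2}) one has $\|\nabla^2\hat{\Theta}\|_{L^2}\le C\|\hat{\Theta}_t\|_{L^2}+C(\text{exponentially decaying terms})$, and Lemma \ref{setacompare} converts this to $\|\nabla^2\theta\|_{L^2}\le C\|\theta_t\|_{L^2}+C(\text{decaying terms})$. Substituting this bound turns the surviving nonlinearity into a Gronwall coefficient of the type $g(\tau)=C\|\nabla\theta\|_{L^2}^2\|\theta_t\|_{L^2}^2$ multiplying $\|\theta_t\|_{L^2}^2$, plus forcing that is $e^{\alpha\tau}$-integrable. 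Multiplying by $e^{\alpha t}$ and absorbing the resulting $\alpha e^{\alpha t}\|\theta_t\|_{L^2}^2$ into the dissipation through the Poincar\'e inequality (valid since $\theta_t|_{\partial\Omega}=0$, for $\alpha$ small enough), it then remains to observe that $\int_0^\infty g\dd\tau\le C\sup_\tau\|\nabla\theta\|_{L^2}^2\int_0^\infty\|\theta_\tau\|_{L^2}^2\dd\tau<\infty$, where the last integral is finite because $\int_0^\infty\|\theta_\tau\|_{L^2}^2\le\int_0^\infty e^{\alpha\tau}\|\theta_\tau\|_{L^2}^2\le C$ by Proposition \ref{seta-L2H2}. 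Hence the Gronwall factor stays uniformly bounded in $t$ and yields $\|\theta_t\|_{L^2}^2\le Ce^{-\alpha t}$ together with $\int_0^t e^{\alpha\tau}\|\nabla\theta_\tau\|_{L^2}^2\dd\tau\le C$; the elliptic bound above then upgrades this to $\|\nabla^2\theta\|_{L^2}^2\le Ce^{-\alpha t}$.

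For the remaining $H^3$ integral I would differentiate once more in space: writing $-\Delta\hat{\Theta}=g$ with $g=-\kappa(\theta)^{-1}(\hat{\Theta}_t+\bm{u}\cdot\nabla\hat{\Theta})-\nabla\cdot\bm{v}$ and applying Lemma \ref{wup} with $m=1$ gives $\|\hat{\Theta}\|_{H^3}\le C\|g\|_{H^1}$. The norm $\|g\|_{H^1}$ is dominated by $\|\nabla\theta_t\|_{L^2}$, $\|\nabla^2\bm{v}\|_{L^2}$ and products of lower-order derivatives that are already pointwise exponentially small, so $\int_0^t e^{\alpha\tau}\|\hat{\Theta}\|_{H^3}^2\dd\tau$ is controlled by the dissipation integrals of Propositions \ref{seta-L2H2}, \ref{uv-H2} and of the present estimate; converting $\hat{\Theta}$ back to $\theta$ by the chain rule and two-dimensional Sobolev embeddings (using the $H^2$ decay just obtained) delivers $\int_0^t e^{\alpha\tau}\|\theta\|_{H^3}^2\dd\tau\le C$. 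The main obstacle throughout is precisely the term $\int_\Omega\bm{u}_t\cdot\nabla\theta\,\theta_t\dd\bm{x}$: without the integration-by-parts trick exploiting incompressibility and $\|\theta\|_{L^\infty}\le M$, one cannot close the estimate with only the $H^1$ regularity of $\bm{u}_0,\bm{v}_0$ recorded in the statement. A secondary, pervasive subtlety is the bookkeeping of the Gronwall coefficients so that every factor multiplying the energy is globally integrable in time, which relies essentially on the weighted dissipation integrals established in the previous propositions.
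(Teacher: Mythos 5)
Your proposal is correct and, at the level of structure, coincides with the paper's proof: differentiate the $\theta$-equation in time, test with $\theta_t$ (using $\theta_t|_{\partial\Omega}=0$ and $\nabla\cdot\bm{u}=0$), close a Gronwall argument with the weight $e^{\alpha t}$ using the dissipation integrals from Propositions \ref{seta-L2H2} and \ref{uv-H2} together with $\|\theta_t(0)\|_{L^2}\le C(\|\theta_0\|_{H^2},\|\bm{u}_0\|_{H^1},\|\bm{v}_0\|_{H^1})$, then recover $\|\nabla^2\theta\|_{L^2}^2\le Ce^{-\alpha t}$ from the elliptic bound \eqref{da-seta8}, \eqref{H2-5} and Lemma \ref{setacompare}, and obtain the $H^3$ integral by elliptic regularity. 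The one genuine difference is your treatment of $\int_\Omega\bm{u}_t\cdot\nabla\theta\,\theta_t\,{\rm d}\bm{x}$: integrating by parts with $\nabla\cdot\bm{u}_t=0$ and $\|\theta\|_{L^\infty}\le M$ to get $M\|\bm{u}_t\|_{L^2}\|\nabla\theta_t\|_{L^2}$ is a clean and valid shortcut that bypasses one interpolation. However, your surrounding claim --- that a direct H\"older/Gagliardo--Nirenberg estimate ``would generate the uncontrolled quantity $\|\bm{u}_t\|_{L^2}^4$'' and that one ``cannot close the estimate'' without the integration-by-parts trick --- is false, and the paper's proof is a counterexample: it bounds the term by $C\|\bm{u}_t\|_{L^2}\|\theta_t\|_{L^2}^{1/2}\|\nabla\theta_t\|_{L^2}^{1/2}\bigl(\|\nabla\theta\|_{L^2}^{1/2}\|\nabla^2\theta\|_{L^2}^{1/2}+\|\nabla\theta\|_{L^2}\bigr)$ and applies Young's inequality twice (exponents $(4,4/3)$ to absorb $\|\nabla\theta_t\|_{L^2}^{1/2}$ into the dissipation, then $(3/2,3)$), which yields $C\|\bm{u}_t\|_{L^2}^2+C\|\theta_t\|_{L^2}^2\bigl(\|\nabla\theta\|_{L^2}^2\|\nabla^2\theta\|_{L^2}^2+\|\nabla\theta\|_{L^2}^4\bigr)$, exactly as in \eqref{H2-2}; the quantity $\|\bm{u}_t\|_{L^2}$ never appears to a power higher than two, and the quartic weight lands on $\theta$-quantities whose time integrals are finite by Proposition \ref{seta-L2H2}. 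Your remaining deviations are harmless: absorbing $\alpha e^{\alpha t}\|\theta_t\|_{L^2}^2$ via Poincar\'e instead of treating it as an integrable forcing term (the paper's choice), and running the $H^3$ elliptic estimate on $\hat{\Theta}$ rather than on $-\kappa(\theta)\Delta\theta=\cdots$; for the latter, note only that the contribution $\bm{u}\cdot\nabla^2\hat{\Theta}$ to $\|g\|_{H^1}$ is \emph{not} a product of exponentially small lower-order factors --- as in the paper, it must be estimated by $\|\bm{u}\|_{L^4}\|\nabla^2\hat{\Theta}\|_{L^4}$, interpolated, and the resulting $\|\nabla^3\hat{\Theta}\|_{L^2}^{1/2}$ absorbed into the left-hand side.
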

\begin{proof}
Taking the derivative of time to the third equation of $\eqref{3D}$ gives that
\ben\label{H2-1}
\partial_t\theta_t+(\bm{u}\cdot\nabla)\theta_t-\nabla\cdot(\kappa(\theta)\nabla\theta_t)
=\nabla\cdot(\kappa'(\theta)\theta_t\nabla\theta)-\bm{u}_t\cdot\nabla\theta-\nabla\cdot\bm{v}_t.
\een
Multiplying by $\theta_t$ and exploiting \eqref{yita-22}, Lemma \ref{P1}, we derive
\ben\label{H2-2}
&&\frac{1}{2}\frac{{\rm d}}{{\rm d}t}\|\theta_t\|_{L^2}^2+\frac{1}{\sigma}\|\nabla\theta_t\|_{L^2}^2\notag\\
&\leq&-\int_\Omega\kappa'(\theta)\theta_t\nabla\theta\cdot\nabla\theta_t{\rm d}\bm{x}
-\int_\Omega\bm{u}_t\cdot\nabla\theta\theta_t{\rm d}\bm{x}-\int_\Omega(\nabla\cdot\bm{v}_t)\theta_t{\rm d}\bm{x}\notag\\
&\leq&\widetilde{M}\|\theta_t\|_{L^4}\|\nabla\theta\|_{L^4}\|\nabla\theta_t\|_{L^2}
+C\|\bm{u}_t\|_{L^2}\|\nabla\theta\|_{L^4}\|\theta_t\|_{L^4}
+C\|\bm{v}_t\|_{L^2}\|\nabla\theta_t\|_{L^2}\notag\\
&\leq&C\|\theta_t\|_{L^2}^{\frac{1}{2}}\|\nabla\theta_t\|_{L^2}^{\frac{3}{2}}\|\nabla\theta\|_{L^2}^{\frac{1}{2}}
\|\nabla^2\theta\|_{L^2}^{\frac{1}{2}}
+C\|\theta_t\|_{L^2}^{\frac{1}{2}}\|\nabla\theta_t\|_{L^2}^{\frac{3}{2}}\|\nabla\theta\|_{L^2}\notag\\
&&+C\|\bm{u}_t\|_{L^2}\|\theta_t\|_{L^2}^{\frac{1}{2}}\|\nabla\theta_t\|_{L^2}^{\frac{1}{2}}
\|\nabla\theta\|_{L^2}^{\frac{1}{2}}\|\nabla^2\theta\|_{L^2}^{\frac{1}{2}}
+C\|\bm{u}_t\|_{L^2}\|\theta_t\|_{L^2}^{\frac{1}{2}}\|\nabla\theta_t\|_{L^2}^{\frac{1}{2}}\|\nabla\theta\|_{L^2}\notag\\
&&+C\|\bm{v}_t\|_{L^2}\|\nabla\theta_t\|_{L^2}\notag\\
&\leq&\frac{1}{2\sigma}\|\nabla\theta_t\|_{L^2}^2+C\|\bm{u}_t\|_{L^2}^2+C\|\bm{v}_t\|_{L^2}^2
+C\|\theta_t\|_{L^2}^2(\|\nabla\theta\|_{L^2}^2\|\nabla^2\theta\|_{L^2}^2+\|\nabla\theta\|_{L^2}^4).
\een
By means of Proposition \ref{seta-L2H2} and Proposition \ref{uv-H2}, multiplying $e^{\alpha t}$ with \eqref{H2-2} and using the Gronwall inequality, we conclude
\ben\label{H2-3}
&&e^{\alpha t}\|\theta_t\|_{L^2}^2+\frac{1}{\sigma}\int_0^t e^{\alpha\tau}\|\nabla\theta_\tau\|_{L^2}^2{\rm d}\tau\notag\\
&\leq&C\Big[\|\theta_t(0)\|_{L^2}^2+\int_0^t e^{\alpha\tau}
(\|\bm{u}_\tau\|_{L^2}^2+\|\bm{v}_\tau\|_{L^2}^2+\|\theta_\tau\|_{L^2}^2){\rm d}\tau\Big] \exp\Big[{\int_0^t(\|\nabla\theta\|_{L^2}^2\|\nabla^2\theta\|_{L^2}^2+\|\nabla\theta\|_{L^2}^4){\rm d}\tau}\Big]\notag\\
&\leq&C,
\een
here the value of $\|\theta_t(0)\|_{L^2}$ will be controlled by $\|\theta_0\|_{H^2}$.
By virtue of \eqref{da-seta8}, \eqref{H2-5} and Lemma \ref{setacompare}, it is easy to get
\beno
\|\nabla^2\theta\|_{L^2}^2\leq Ce^{-\alpha t}.
\eeno

We update the third equation of $\eqref{3D}$ as
\beno
-\kappa(\theta)\Delta\theta=\kappa'(\theta)\nabla\theta\cdot\nabla\theta-\theta_t-(\bm{u}\cdot\nabla)\theta-\nabla\cdot\bm{v}.
\eeno
Applying Lemma \ref{wup}, Proposition $\ref{weak2}-\ref{uv-H2}$ shows that
\beno
\|\theta\|_{H^3}
&\leq&C\|\kappa'(\theta)\nabla\theta\cdot\nabla\theta\|_{H^1}+C\|\theta_t\|_{H^1}
+C\|\bm{u}\cdot\nabla\theta\|_{H^1}+C\|\nabla\cdot\bm{v}\|_{H^1}\\
&\leq&C\|\nabla\theta\|_{L^4}^2+C\|\nabla^2\theta\|_{L^4}\|\nabla\theta\|_{L^4}+C\|\theta_t\|_{H^1}
+C\|\bm{u}\|_{L^4}\|\nabla\theta\|_{L^4}\\
&&+C\|\nabla\bm{u}\|_{L^4}\|\nabla\theta\|_{L^4}+C\|\bm{u}\|_{L^4}\|\nabla^2\theta\|_{L^4}+C\|\nabla\cdot\bm{v}\|_{H^1}\\
&\leq&Ce^{-\alpha t}+C\|\theta_t\|_{H^1}+C\|\nabla\bm{v}\|_{H^1}+C\|\bm{u}\|_{L^4}\|\nabla^2\theta\|_{L^4}\\
&&+C(\|\nabla\theta\|_{L^2}^\frac{1}{2}\|\nabla^2\theta\|_{L^2}^\frac{1}{2}+\|\nabla\theta\|_{L^2})(\|\nabla^2\theta\|_{L^4}+\|\nabla\bm{u}\|_{L^4})\\
&\leq&Ce^{-\alpha t}+C\|\theta_t\|_{H^1}+C\|\nabla\bm{v}\|_{H^1}
+Ce^{-\frac{\alpha t}{2}}
(\|\nabla^2\theta\|_{L^2}^\frac{1}{2}\|\nabla^3\theta\|_{L^2}^\frac{1}{2}+\|\nabla^2\theta\|_{L^2}\\
&&+\|\nabla\bm{u}\|_{L^2}^\frac{1}{2}\|\nabla^2\bm{u}\|_{L^2}^\frac{1}{2}+\|\nabla\bm{u}\|_{L^2})\\
&\leq&\frac{1}{2}\|\nabla^3\theta\|_{L^2}+Ce^{-\alpha t}+C\|\theta_t\|_{H^1}+C\|\nabla\bm{v}\|_{H^1}+C\|\nabla^2\bm{u}\|_{L^2},
\eeno
which easily derives $\int_0^te^{\alpha\tau}\|\theta\|_{H^3}^2{\rm d}\tau\leq C.$
\end{proof}
\begin{proposition}\label{uv-qiangH2}
Under the assumptions of Proposition \ref{sita-H2}, we further suppose $(\bm{u}_0,\bm{v}_0)\in H^2$, then for any solution $(\bm{u},\bm{v},\theta)$ of the system $\eqref{3D}-\eqref{eq20}$, for any $t>0$, we hold
\beno
\|\nabla^2\bm{u}\|_{L^2}^2+\|\nabla^2\bm{v}\|_{L^2}^2+\|\bm{u}_t\|_{L^2}^2+\|\bm{v}_t\|_{L^2}^2
\leq Ce^{-\alpha t}
\eeno
and
\beno
\int_0^te^{\alpha\tau}(\|\nabla\bm{u}_\tau\|_{L^2}^2+\|\nabla\bm{v}_\tau\|_{L^2}^2){\rm d}\tau+\int_0^te^{\alpha\tau}(\|\bm{u}\|_{H^3}^2+\|\bm{v}\|_{H^3}^2){\rm d}\tau
\leq C,
\eeno
where $C=C(\|\theta_0\|_{H^2},\|\bm{u}_0\|_{H^2},\|\bm{v}_0\|_{H^2},\sigma,\alpha).$
\end{proposition}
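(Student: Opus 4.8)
The plan is to mirror, at the level of the time-differentiated momentum equations, the argument carried out for $\theta$ in Proposition \ref{sita-H2}, and then to upgrade the resulting control of $(\bm{u}_t,\bm{v}_t)$ to full spatial regularity via the elliptic and Stokes estimates of Section \ref{prel}. First I would differentiate the first two equations of \eqref{3D} in time, writing the diffusion terms as $-\nabla\cdot(\mu(\theta)\nabla\bm{u}_t)-\nabla\cdot(\mu'(\theta)\theta_t\nabla\bm{u})$ and analogously for $\bm{v}$. Testing the differentiated system with $(\bm{u}_t,\bm{v}_t)$, integrating by parts, and using $\nabla\cdot\bm{u}_t=0$ (which removes the pressure term $\nabla p_t$ and the convective self-interactions $\int(\bm{u}\cdot\nabla)\bm{u}_t\cdot\bm{u}_t=\int(\bm{u}\cdot\nabla)\bm{v}_t\cdot\bm{v}_t=0$) together with the lower bounds $\mu,\nu\ge\sigma^{-1}$, I obtain
\[
\tfrac12\tfrac{{\rm d}}{{\rm d}t}\big(\|\bm{u}_t\|_{L^2}^2+\|\bm{v}_t\|_{L^2}^2\big)+\tfrac1\sigma\big(\|\nabla\bm{u}_t\|_{L^2}^2+\|\nabla\bm{v}_t\|_{L^2}^2\big)\le \mathcal{N},
\]
where $\mathcal{N}$ collects $\int(\bm{u}_t\cdot\nabla)\bm{u}\cdot\bm{u}_t$, $\int\mu'(\theta)\theta_t\nabla\bm{u}:\nabla\bm{u}_t$, the Reynolds-stress contributions $\int\nabla\cdot(\bm{v}_t\otimes\bm{v}+\bm{v}\otimes\bm{v}_t)\cdot\bm{u}_t$, and, from the $\bm{v}$ equation, $\int(\bm{u}_t\cdot\nabla)\bm{v}\cdot\bm{v}_t$, $\int\nu'(\theta)\theta_t\nabla\bm{v}:\nabla\bm{v}_t$, $\int\nabla\theta_t\cdot\bm{v}_t$, $\int(\bm{v}_t\cdot\nabla)\bm{u}\cdot\bm{v}_t$ and $\int(\bm{v}\cdot\nabla)\bm{u}_t\cdot\bm{v}_t$.

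I would estimate each term by Hölder together with the Gagliardo--Nirenberg inequality (Lemma \ref{P1}), e.g. $\|\bm{v}_t\|_{L^4}\le C\|\bm{v}_t\|_{L^2}^{1/2}\|\nabla\bm{v}_t\|_{L^2}^{1/2}$, and then Young's inequality to absorb every occurrence of $\|\nabla\bm{u}_t\|_{L^2}$ and $\|\nabla\bm{v}_t\|_{L^2}$ into the dissipation on the left. Controlling $\mu',\nu'$ by $\widetilde{M}$ through \eqref{yita-22} and using the decay already furnished by Propositions \ref{weak2}, \ref{seta-L2H2}, \ref{uv-H2} and \ref{sita-H2} (namely $\|\nabla\bm{u}\|_{L^2},\|\nabla\bm{v}\|_{L^2},\|\nabla\theta\|_{L^2},\|\theta_t\|_{L^2},\|\nabla^2\theta\|_{L^2}\le Ce^{-\alpha t/2}$, plus the weighted space--time bounds on $\|\nabla^2\bm{u}\|_{L^2},\|\nabla^2\bm{v}\|_{L^2}$), I expect an inequality of the schematic form
\[
\tfrac{{\rm d}}{{\rm d}t}\big(\|\bm{u}_t\|_{L^2}^2+\|\bm{v}_t\|_{L^2}^2\big)+\tfrac1\sigma\big(\|\nabla\bm{u}_t\|_{L^2}^2+\|\nabla\bm{v}_t\|_{L^2}^2\big)\le g(t)\big(\|\bm{u}_t\|_{L^2}^2+\|\bm{v}_t\|_{L^2}^2\big)+h(t),
\]
with $g\in L^1(0,\infty)$ and $h$ exponentially small (in particular $h$ absorbs $\|\nabla\theta_t\|_{L^2}^2$, which is integrable against $e^{\alpha\tau}$ by Proposition \ref{sita-H2}). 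Multiplying by $e^{\alpha t}$ and applying Gronwall then gives $\|\bm{u}_t\|_{L^2}^2+\|\bm{v}_t\|_{L^2}^2\le Ce^{-\alpha t}$ and $\int_0^te^{\alpha\tau}(\|\nabla\bm{u}_\tau\|_{L^2}^2+\|\nabla\bm{v}_\tau\|_{L^2}^2)\,{\rm d}\tau\le C$; the finiteness of $\|\bm{u}_t(0)\|_{L^2}+\|\bm{v}_t(0)\|_{L^2}$ is exactly where $(\bm{u}_0,\bm{v}_0)\in H^2$ is used, since evaluating the momentum equations at $t=0$ expresses $\bm{u}_t(0),\bm{v}_t(0)$ in terms of the $H^2$ data (with the initial pressure recovered from \eqref{pL2}).

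It remains to upgrade the spatial regularity. Rewriting $\eqref{3D}_1$ as the non-divergence Stokes system $-\mu(\theta)\Delta\bm{u}+\nabla p=-\bm{u}_t-(\bm{u}\cdot\nabla)\bm{u}-\nabla\cdot(\bm{v}\otimes\bm{v})+\mu'(\theta)\nabla\theta\cdot\nabla\bm{u}$ and $\eqref{3D}_2$ as $-\nu(\theta)\Delta\bm{v}=-\bm{v}_t-(\bm{u}\cdot\nabla)\bm{v}-\nabla\theta-(\bm{v}\cdot\nabla)\bm{u}+\nu'(\theta)\nabla\theta\cdot\nabla\bm{v}$, Lemma \ref{nodiv} and Lemma \ref{wup} bound $\|\nabla^2\bm{u}\|_{L^2}+\|\nabla^2\bm{v}\|_{L^2}$ by $\|\bm{u}_t\|_{L^2}+\|\bm{v}_t\|_{L^2}$ plus products of quantities decaying like $e^{-\alpha t/2}$, yielding $\|\nabla^2\bm{u}\|_{L^2}^2+\|\nabla^2\bm{v}\|_{L^2}^2\le Ce^{-\alpha t}$. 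Differentiating the forcing once and invoking Corollary \ref{C1} for $\bm{u}$ and Lemma \ref{wup} with $m=1$ for $\bm{v}$, one bounds $\|\bm{u}\|_{H^3}+\|\bm{v}\|_{H^3}$ by $\|\nabla\bm{u}_t\|_{L^2}+\|\nabla\bm{v}_t\|_{L^2}$, the $H^3$-norm of $\theta$, and decaying products; integrating against $e^{\alpha\tau}$ and using Proposition \ref{sita-H2} together with the weighted bound just obtained gives $\int_0^te^{\alpha\tau}(\|\bm{u}\|_{H^3}^2+\|\bm{v}\|_{H^3}^2)\,{\rm d}\tau\le C$.

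The main obstacle is the coupling terms $\int\nabla\cdot(\bm{v}_t\otimes\bm{v})\cdot\bm{u}_t$ and $\int(\bm{v}\cdot\nabla)\bm{u}_t\cdot\bm{v}_t$, which carry a full spatial derivative on a time-derivative unknown and so compete directly with the dissipation; closing the estimate requires that the accompanying coefficient $\|\bm{v}\|_{L^4}\le C\|\bm{v}\|_{L^2}^{1/2}\|\nabla\bm{v}\|_{L^2}^{1/2}$ decay, so that after a Gagliardo--Nirenberg splitting the surviving contribution to $g(t)$ is genuinely integrable in time. A secondary difficulty is that the $H^3$ estimate for $\bm{v}$ feeds back the $H^3$-norm of $\theta$ through the forcing $\nabla\theta$, which is precisely why Proposition \ref{sita-H2} must be established before this step.
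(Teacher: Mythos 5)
Your proposal is correct and follows essentially the same route as the paper: differentiate the momentum equations in time, test with $(\bm{u}_t,\bm{v}_t)$, absorb the gradient-of-time-derivative terms into the dissipation via Gagliardo--Nirenberg and Young, multiply by $e^{\alpha t}$ and apply Gronwall (using the weighted integrability from Propositions \ref{seta-L2H2}--\ref{sita-H2} and the $H^2$ data to control $\|\bm{u}_t(0)\|_{L^2}+\|\bm{v}_t(0)\|_{L^2}$), then upgrade to $H^2$ decay via the Stokes and elliptic estimates (the paper simply reuses its inequalities \eqref{uvH29} and \eqref{uvH211}, which are exactly what your Lemma \ref{nodiv}/Lemma \ref{wup} step re-derives) and to the weighted $H^3$ bound via Corollary \ref{C1} and Lemma \ref{wup}. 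You also correctly identify the same structural points the paper relies on, namely the absorption of $\int(\bm{v}\cdot\nabla)\bm{u}_t\cdot\bm{v}_t$ using the decay of $\|\bm{v}\|_{L^4}$ and the need for Proposition \ref{sita-H2} to control the $\|\theta\|_{H^3}$ feedback in the $\bm{v}$ forcing.
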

\begin{proof}
Taking the temporal derivatives of the first two equations in $\eqref{3D}$ contributes to
\begin{eqnarray}
\left\{\begin{array}{ll}
\partial_t\bm{u}_t+(\bm{u}\cdot\nabla)\bm{u}_t-\nabla\cdot(\mu(\theta)\nabla\bm{u}_t)+\nabla p_t=\nabla\cdot(\mu'(\theta)\theta_t\nabla\bm{u})-\bm{u}_t\cdot\nabla\bm{u}-\nabla\cdot(\bm{v}\otimes\bm{v})_t,\\
\partial_t\bm{v}_t+(\bm{u}\cdot\nabla)\bm{v}_t-\nabla\cdot(\nu(\theta)\nabla\bm{v}_t) =\nabla\cdot(\nu'(\theta)\theta_t\nabla\bm{v})-\bm{u}_t\cdot\nabla\bm{v}-\bm{v}_t\cdot\nabla\bm{u}-\bm{v}\cdot\nabla\bm{u}_t-\nabla\theta_t.
\end{array}\right.
\end{eqnarray}
Taking inner product with $\bm{u}_t$ and $\bm{v}_t$ respectively, one can attain
\ben\label{H21}
&&\frac{1}{2}\frac{{\rm d}}{{\rm d}t}(\|\bm{u}_t\|_{L^2}^2+\|\bm{v}_t\|_{L^2}^2)
+\frac{1}{\sigma}(\|\nabla\bm{u}_t\|_{L^2}^2+\|\nabla\bm{v}_t\|_{L^2}^2)\notag\\
&\leq&-\int_\Omega\mu'(\theta)\theta_t\nabla\bm{u}\cdot\nabla\bm{u}_t{\rm d}\bm{x}
-\int_\Omega\nu'(\theta)\theta_t\nabla\bm{v}\cdot\nabla\bm{v}_t{\rm d}\bm{x}
-\int_\Omega\bm{u}_t\cdot\nabla\bm{u}\cdot\bm{u}_t{\rm d}\bm{x}-\int_\Omega\bm{u}_t\cdot\nabla\bm{v}\cdot\bm{v}_t{\rm d}\bm{x}\notag\\
&&-\int_\Omega\nabla\cdot(\bm{v}\otimes\bm{v})_t\cdot\bm{u}_t{\rm d}\bm{x}
-\int_\Omega\bm{v}_t\cdot\nabla\bm{u}\cdot\bm{v}_t{\rm d}\bm{x}
-\int_\Omega\bm{v}\cdot\nabla\bm{u}_t\cdot\bm{v}_t{\rm d}\bm{x}
-\int_\Omega\nabla\theta_t\cdot\bm{v}_t{\rm d}\bm{x}\notag\\
&=&\sum\limits_{i=1}^{8}I_i.
\een
Owing to \eqref{yita-22},
\ben\label{H22}
I_1+I_2
&\leq&C\|\theta_t\|_{L^4}(\|\nabla\bm{u}\|_{L^4}\|\nabla\bm{u}_t\|_{L^2}
+\|\nabla\bm{v}\|_{L^4}\|\nabla\bm{v}_t\|_{L^2})\notag\\
&\leq&\frac{1}{4\sigma}(\|\nabla\bm{u}_t\|_{L^2}^2+\|\nabla\bm{v}_t\|_{L^2}^2)
+C\|\theta_t\|_{L^4}^2(\|\nabla\bm{u}\|_{L^4}^2+\|\nabla\bm{v}\|_{L^4}^2)\notag\\
&\leq&\frac{1}{4\sigma}(\|\nabla\bm{u}_t\|_{L^2}^2+\|\nabla\bm{v}_t\|_{L^2}^2)
+C\|\theta_t\|_{L^2}\|\nabla\theta_t\|_{L^2}(\|\nabla\bm{u}\|_{L^2}\|\nabla^2\bm{u}\|_{L^2}\notag\\
&&+\|\nabla\bm{v}\|_{L^2}\|\nabla^2\bm{v}\|_{L^2}+\|\nabla\bm{u}\|_{L^2}^2+\|\nabla\bm{v}\|_{L^2}^2)\notag\\
&\leq&\frac{1}{4\sigma}(\|\nabla\bm{u}_t\|_{L^2}^2+\|\nabla\bm{v}_t\|_{L^2}^2)
+C\|\theta_t\|_{L^2}^2\|\nabla\theta_t\|_{L^2}^2
+C(\|\nabla\bm{u}\|_{L^2}^2\|\nabla^2\bm{u}\|_{L^2}^2\notag\\
&&+\|\nabla\bm{v}\|_{L^2}^2\|\nabla^2\bm{v}\|_{L^2}^2+\|\nabla\bm{u}\|_{L^2}^4+\|\nabla\bm{v}\|_{L^2}^4).
\een
In addition,
\ben\label{H23}
\sum\limits_{i=3}^{8}I_i
&\leq&C(\|\bm{u}_t\|_{L^4}^2\|\nabla\bm{u}\|_{L^2}+\|\bm{u}_t\|_{L^4}\|\bm{v}_t\|_{L^4}\|\nabla\bm{v}\|_{L^2}
+\|\nabla\bm{u}_t\|_{L^2}\|\bm{v}\|_{L^4}\|\bm{v}_t\|_{L^4}\notag\\
&&+\|\nabla\bm{u}\|_{L^2}\|\bm{v}_t\|_{L^4}^2+\|\theta_t\|_{L^2}\|\nabla\bm{v}_t\|_{L^2})\notag\\
&\leq&C(\|\bm{u}_t\|_{L^2}\|\nabla\bm{u}_t\|_{L^2}+\|\bm{v}_t\|_{L^2}\|\nabla\bm{v}_t\|_{L^2})
(\|\nabla\bm{u}\|_{L^2}+\|\nabla\bm{v}\|_{L^2})\notag\\
&&+C\|\nabla\bm{v}_t\|_{L^2}\|\theta_t\|_{L^2}+C\|\bm{v}\|_{L^2}^{\frac{1}{2}}\|\nabla\bm{v}\|_{L^2}^{\frac{1}{2}}
\|\bm{v}_t\|_{L^2}^{\frac{1}{2}}\|\nabla\bm{v}_t\|_{L^2}^{\frac{1}{2}}\|\nabla\bm{u}_t\|_{L^2}\notag\\
&\leq&\frac{1}{4\sigma}(\|\nabla\bm{u}_t\|_{L^2}^2+\|\nabla\bm{v}_t\|_{L^2}^2)
+C(\|\bm{u}_t\|_{L^2}^2+\|\bm{v}_t\|_{L^2}^2)(\|\nabla\bm{u}\|_{L^2}^2+\|\nabla\bm{v}\|_{L^2}^2)\notag\\
&&+C\|\theta_t\|_{L^2}^2.
\een
Substituting \eqref{H22} and \eqref{H23} into \eqref{H21}, it follows that
\ben\label{H24}
&&\frac{{\rm d}}{{\rm d}t}(\|\bm{u}_t\|_{L^2}^2+\|\bm{v}_t\|_{L^2}^2)
+\frac{1}{\sigma}(\|\nabla\bm{u}_t\|_{L^2}^2+\|\nabla\bm{v}_t\|_{L^2}^2)\notag\\
&\leq&C(\|\bm{u}_t\|_{L^2}^2+\|\bm{v}_t\|_{L^2}^2)(\|\nabla\bm{u}\|_{L^2}^2+\|\nabla\bm{v}\|_{L^2}^2)
+C\|\theta_t\|_{L^2}^2\|\nabla\theta_t\|_{L^2}^2\notag\\
&&+\,C(\|\nabla\bm{u}\|_{L^2}^2\|\nabla^2\bm{u}\|_{L^2}^2
+\|\nabla\bm{v}\|_{L^2}^2\|\nabla^2\bm{v}\|_{L^2}^2+\|\nabla\bm{u}\|_{L^2}^4+\|\nabla\bm{v}\|_{L^2}^4)
+C\|\theta_t\|_{L^2}^2.
\een
Multiplying $e^{\alpha t}$ with \eqref{H24} and using the Gronwall inequality, it is accessible to get
\ben\label{H25}
&&e^{\alpha t}(\|\bm{u}_t\|_{L^2}^2+\|\bm{v}_t\|_{L^2}^2)
+\frac{1}{\sigma}\int_0^te^{\alpha\tau}(\|\nabla\bm{u}_\tau\|_{L^2}^2+\|\nabla\bm{v}_\tau\|_{L^2}^2){\rm d}\tau\notag\\
&\leq&C\Big[\|\bm{u}_t(0)\|_{L^2}^2+\|\bm{v}_t(0)\|_{L^2}^2+\int_0^te^{\alpha\tau}
(\|\bm{u}_\tau\|_{L^2}^2+\|\bm{v}_\tau\|_{L^2}^2+\|\theta_\tau\|_{L^2}^2\|\nabla\theta_\tau\|_{L^2}^2+\|\nabla\bm{u}\|_{L^2}^2\|\nabla^2\bm{u}\|_{L^2}^2\notag\\
&&+\|\nabla\bm{v}\|_{L^2}^2\|\nabla^2\bm{v}\|_{L^2}^2+\|\nabla\bm{u}\|_{L^2}^4+\|\nabla\bm{v}\|_{L^2}^4+\|\theta_\tau\|_{L^2}^2){\rm d}\tau\Big]\exp\Big[{\int_0^t(\|\nabla\bm{u}\|_{L^2}^2+\|\nabla\bm{v}\|_{L^2}^2){\rm d}\tau}\Big]\notag\\
&\leq&C.
\een
According to \eqref{uvH29} and \eqref{uvH211}, we know
\ben\label{H26}
\|\nabla^2\bm{u}\|_{L^2}^2+\|\nabla^2\bm{v}\|_{L^2}^2
\leq Ce^{-\alpha t}.
\een
\vskip .1in
Applying Corollary \ref{C1}, \eqref{pL2} and Lemma \ref{wup}, we have
\beno
&&\|\bm{u}\|_{H^3}+\|\bm{v}\|_{H^3}+\|\nabla^2p\|_{L^2}\\
&\leq&C(\|\bm{u}_t\|_{H^1}+\|\bm{u}\cdot\nabla\bm{u}\|_{H^1}+\|\mu'(\theta)\nabla\theta\cdot\nabla\bm{u}\|_{H^1}
+\|\nabla\cdot(\bm{v}\otimes\bm{v})\|_{H^1}+\|\bm{u}\|_{H^2}+\|\nu'(\theta)\nabla\theta\cdot\nabla\bm{v}\|_{H^1}\\
&&+\|\bm{v}_t\|_{H^1}+\|\bm{u}\cdot\nabla\bm{v}\|_{H^1}
+\|\nabla\theta\|_{H^1}+\|\bm{v}\cdot\nabla\bm{u}\|_{H^1})\\
&\leq&C(\|\bm{u}_t\|_{H^1}+\|\bm{v}_t\|_{H^1}+\|\bm{u}\|_{H^2}+\|\nabla\theta\|_{H^1})+C(\|\bm{u}\|_{L^4}\|\nabla\bm{u}\|_{L^4}+\|\nabla\bm{u}\|_{L^4}^2+\|\bm{u}\|_{L^4}\|\nabla^2\bm{u}\|_{L^4}\\
&&+\|\nabla\theta\|_{L^4}\|\nabla\bm{u}\|_{L^4}+\|\nabla^2\theta\|_{L^4}\|\nabla\bm{u}\|_{L^4}
+\|\nabla\theta\|_{L^4}\|\nabla^2\bm{u}\|_{L^4}+\|\nabla\bm{v}\|_{L^4}\|\bm{v}\|_{L^4}+\|\nabla^2\bm{v}\|_{L^4}\|\bm{v}\|_{L^4}+\|\nabla\bm{v}\|_{L^4}^2\\
&&+\|\nabla\theta\|_{L^4}\|\nabla\bm{v}\|_{L^4}+\|\nabla^2\theta\|_{L^4}\|\nabla\bm{v}\|_{L^4}+\|\nabla\theta\|_{L^4}\|\nabla^2\bm{v}\|_{L^4}+\|\bm{u}\|_{L^4}\|\nabla\bm{v}\|_{L^4}+\|\nabla\bm{u}\|_{L^4}\|\nabla\bm{v}\|_{L^4}\\
&&+\|\bm{u}\|_{L^4}\|\nabla^2\bm{v}\|_{L^4}+\|\bm{v}\|_{L^4}\|\nabla\bm{u}\|_{L^4}+\|\bm{v}\|_{L^4}\|\nabla^2\bm{u}\|_{L^4})\\
&\leq&C(\|\bm{u}_t\|_{H^1}+\|\bm{v}_t\|_{H^1}+\|\bm{u}\|_{H^2}+\|\nabla\theta\|_{H^1})
+Ce^{-\frac{\alpha t}{2}}(\|\nabla^2\bm{u}\|_{L^4}+\|\nabla^2\bm{v}\|_{L^4})\\
&&+C\|\nabla^2\theta\|_{L^4}(\|\nabla\bm{u}\|_{L^4}+\|\nabla\bm{v}\|_{L^4})+Ce^{-\alpha t}\\
&\leq&C(\|\bm{u}_t\|_{H^1}+\|\bm{v}_t\|_{H^1}+\|\bm{u}\|_{H^2}+\|\nabla\theta\|_{H^1})
+Ce^{-\frac{\alpha t}{2}}(\|\nabla^2\bm{u}\|_{L^2}^{\frac{1}{2}}\|\nabla^3\bm{u}\|_{L^2}^{\frac{1}{2}}\\
&&+\|\nabla^2\bm{v}\|_{L^2}^{\frac{1}{2}}\|\nabla^3\bm{v}\|_{L^2}^{\frac{1}{2}}+\|\nabla^2\bm{u}\|_{L^2}
+\|\nabla^2\bm{v}\|_{L^2})+C(\|\nabla^2\theta\|_{L^2}^{\frac{1}{2}}\|\nabla^3\theta\|_{L^2}^{\frac{1}{2}}
+\|\nabla^2\theta\|_{L^2})\\
&&\times(\|\nabla\bm{u}\|_{L^2}^{\frac{1}{2}}\|\nabla^2\bm{u}\|_{L^2}^{\frac{1}{2}}
+\|\nabla\bm{v}\|_{L^2}^{\frac{1}{2}}\|\nabla^2\bm{v}\|_{L^2}^{\frac{1}{2}}
+\|\nabla\bm{u}\|_{L^2}+\|\nabla\bm{v}\|_{L^2})+Ce^{-\alpha t}\\
&\leq&\frac{1}{2}(\|\nabla^3\bm{u}\|_{L^2}+\|\nabla^3\bm{v}\|_{L^2})
+C(\|\bm{u}_t\|_{H^1}+\|\bm{v}_t\|_{H^1}+\|\bm{u}\|_{H^2}+\|\nabla\theta\|_{H^1})+Ce^{-\alpha t}+C\|\nabla^3\theta\|_{L^2},
\eeno
which is responsible for
$$\int_0^te^{\alpha\tau}(\|\bm{u}\|_{H^3}^2+\|\bm{v}\|_{H^3}^2){\rm d}\tau
\leq C.$$
\end{proof}
\begin{proof}[\bf Proof of Part \textbf{(b)} of Theorem \ref{T1}:]
The existence of the strong solution can be finished from Part \textbf{(a)} of Theorem \ref{T1} and Proposition \ref{sita-H2}-\ref{uv-qiangH2}. So the left part is to build up the uniqueness of strong solution.

\noindent $\it{Uniqueness}$:
Assume $(\hat{\bm{u}}, \hat{\bm{v}}, \hat{\theta}, \hat{p})$ and $(\widetilde{\bm{u}}, \widetilde{\bm{v}}, \widetilde{\theta}, \widetilde{p})$  are two solutions of the system $\eqref{3D}-\eqref{eq20}$. We denote
$$\mathbf{U}=\hat{\bm{u}}-\widetilde{\bm{u}},\,\,\mathbf{V}=\hat{\bm{v}}-\widetilde{\bm{v}},\,\Theta=\hat{\theta}-\widetilde{\theta},\,\,P=\hat{p}-\widetilde{p},$$ which solves the following initial-boundary value problem
\begin{equation}\label{uni}
\left\{\begin{array}{ll}
\mathbf{U}_t+(\hat{\bm{u}}\cdot\nabla)\mathbf{U}-\nabla\cdot(\mu(\hat{\theta})\nabla\hat{\bm{u}}-\mu(\widetilde{\theta})\nabla\widetilde{\bm{u}})+\nabla P=\alpha_1,\\
\mathbf{V}_t+(\hat{\bm{u}}\cdot\nabla)\mathbf{V}-\nabla\cdot(\nu(\hat{\theta})\nabla\hat{\bm{v}}-\nu(\widetilde{\theta})\nabla\widetilde{\bm{v}})=\alpha_2,\\
\Theta_t+(\hat{\bm{u}}\cdot\nabla)\Theta-\nabla\cdot(\kappa(\hat{\theta})\nabla\hat{\theta}-\kappa(\widetilde{\theta})\nabla\widetilde{\theta})=\alpha_3,\\
\nabla\cdot\mathbf{U}=0,\,\,\mathbf{U}|_{\p\Omega}=\mathbf{V}|_{\p\Omega}=\Theta|_{\p\Omega}=0,\\
(\mathbf{U}, \mathbf{V}, \Theta)(\bm{x},0)=0,
\end{array}\right.
\end{equation}
where
\beno
&&\alpha_1=-(\mathbf{U}\cdot\nabla)\widetilde{\bm{u}}-\nabla\cdot(\mathbf{V}\otimes\hat{\bm{v}})-\nabla\cdot(\widetilde{\bm{v}}\otimes\mathbf{V}),\\
&&\alpha_2=-(\mathbf{U}\cdot\nabla)\widetilde{\bm{v}}-(\hat{\bm{v}}\cdot\nabla)\mathbf{U}-(\mathbf{V}\cdot\nabla)\widetilde{\bm{u}}-\nabla\Theta,\\
&&\alpha_3=-(\mathbf{U}\cdot\nabla)\widetilde{\theta}-\nabla\cdot\mathbf{V}.
\eeno
Taking inner product with \eqref{uni} by $(\mathbf{U},\mathbf{V}, \Theta)$ respectively yields
\ben\label{uni-est}
&&\f12\f{{\rm d}}{{\rm d}t}(\|\mathbf{U}\|_{L^2}^2+\|\mathbf{V}\|_{L^2}^2+\|\Theta\|_{L^2}^2)
-\int_\Omega\nabla\cdot(\mu(\hat{\theta})\nabla\hat{\bm{u}}-\mu(\widetilde{\theta})\nabla\widetilde{\bm{u}})\cdot\mathbf{U}{\rm d}\bm{x}\notag\\
&&-\int_\Omega\nabla\cdot(\nu(\hat{\theta})\nabla\hat{\bm{v}}-\nu(\widetilde{\theta})\nabla\widetilde{\bm{v}})\cdot\mathbf{V}{\rm d}\bm{x}
-\int_\Omega\nabla\cdot(\kappa(\hat{\theta})\nabla\hat{\theta}-\kappa(\widetilde{\theta})\nabla\widetilde{\theta})\Theta {\rm d}\bm{x}\notag\\
&=&\int_\Omega\alpha_1\cdot\mathbf{U}{\rm d}\bm{x}+\int_\Omega\alpha_2\cdot\mathbf{V}{\rm d}\bm{x}+\int_\Omega\alpha_3\Theta {\rm d}\bm{x}\notag\\
&\leq&C\|\nabla\widetilde{\bm{u}}\|_{L^2}\|\mathbf{U}\|_{L^4}^2
+C\|\widetilde{\bm{v}}\|_{L^\infty}\|\mathbf{V}\|_{L^2}\|\nabla\mathbf{U}\|_{L^2}
+C\|\hat{\bm{v}}\|_{L^\infty}\|\mathbf{V}\|_{L^2}\|\nabla\mathbf{U}\|_{L^2}\notag\\
&&+\,C\|\nabla\widetilde{\bm{v}}\|_{L^2}\|\mathbf{V}\|_{L^4}\|\mathbf{U}\|_{L^4}
+C\|\nabla\widetilde{\bm{u}}\|_{L^2}\|\mathbf{V}\|_{L^4}^2
+C\|\nabla\widetilde{\theta}\|_{L^2}\|\Theta\|_{L^4}\|\mathbf{U}\|_{L^4}\notag\\
&\leq&C\|\nabla\widetilde{\bm{u}}\|_{L^2}\|\mathbf{U}\|_{L^4}^2
+C\|\widetilde{\bm{v}}\|_{H^2}\|\mathbf{V}\|_{L^2}\|\nabla\mathbf{U}\|_{L^2}
+C\|\hat{\bm{v}}\|_{H^2}\|\mathbf{V}\|_{L^2}\|\nabla\mathbf{U}\|_{L^2}\notag\\
&&+\,C\|\nabla\widetilde{\bm{v}}\|_{L^2}\|\mathbf{V}\|_{L^4}\|\mathbf{U}\|_{L^4}
+C\|\nabla\widetilde{\bm{u}}\|_{L^2}\|\mathbf{V}\|_{L^4}^2
+C\|\nabla\widetilde{\theta}\|_{L^2}\|\Theta\|_{L^4}\|\mathbf{U}\|_{L^4}\notag\\
&\leq&C(\|\mathbf{U}\|_{L^2}\|\nabla\mathbf{U}\|_{L^2}+\|\mathbf{V}\|_{L^2}\|\nabla\mathbf{V}\|_{L^2}+\|\Theta\|_{L^2}\|\nabla\Theta\|_{L^2})
(\|\nabla\widetilde{\bm{u}}\|_{L^2}+\|\nabla\widetilde{\bm{v}}\|_{L^2}+\|\nabla\widetilde{\theta}\|_{L^2})\notag\\
&&+\,C(\|\widetilde{\bm{v}}\|_{H^2}+\|\hat{\bm{v}}\|_{H^2})\|\mathbf{V}\|_{L^2}\|\nabla\mathbf{U}\|_{L^2}\notag\\
&\leq&\frac{1}{2\sigma}(\|\nabla\mathbf{U}\|_{L^2}^2+\|\nabla\mathbf{V}\|_{L^2}^2+\|\nabla\Theta\|_{L^2}^2)
+Ce^{-\alpha t}(\|\mathbf{U}\|_{L^2}^2+\|\mathbf{V}\|_{L^2}^2+\|\Theta\|_{L^2}^2),
\een
here we use $\|Y\|_{L^\infty}\leq C\|Y\|_{H^2}$.
Now we concentrate on dealing with the left term of the equation \eqref{uni-est}. By integration by parts and the boundary condition $\mathbf{U}|_{\p\Omega}=0$, we attain
\beno
&&-\int_\Omega\nabla\cdot(\mu(\hat{\theta})\nabla\hat{\bm{u}}-\mu(\widetilde{\theta})\nabla\widetilde{\bm{u}})\cdot\mathbf{U}{\rm d}\bm{x}
=-\int_\Omega\partial_i(\mu(\hat{\theta})\partial_i\hat{u}_j-\mu(\widetilde{\theta})\partial_i\widetilde{u}_j)U_j{\rm d}\bm{x}\\
&=&\int_\Omega(\mu(\hat{\theta})\partial_i\hat{u}_j-\mu(\widetilde{\theta})\partial_i\widetilde{u}_j)\partial_iU_j{\rm d}\bm{x}\\
&=&\int_\Omega\mu(\hat{\theta})\partial_iU_j\partial_iU_j{\rm d}\bm{x}+\int_\Omega(\mu(\hat{\theta})-\mu(\widetilde{\theta}))\partial_i\widetilde{u}_j\partial_iU_j{\rm d}\bm{x}\\
&\geq&\frac{1}{\sigma}\|\nabla\mathbf{U}\|_{L^2}^2+\int_\Omega(\mu(\hat{\theta})-\mu(\widetilde{\theta}))\partial_i\widetilde{u}_j\partial_iU_j{\rm d}\bm{x}.
\eeno
Similarly,
\beno
-\int_\Omega\nabla\cdot(\nu(\hat{\theta})\nabla\hat{\bm{v}}-\nu(\widetilde{\theta})\nabla\widetilde{\bm{v}})\cdot\mathbf{V}{\rm d}\bm{x}
\geq\frac{1}{\sigma}\|\nabla\mathbf{V}\|_{L^2}^2+\int_\Omega(\nu(\hat{\theta})-\nu(\widetilde{\theta}))\partial_i\widetilde{v}_j\partial_iV_j{\rm d}\bm{x},\\
-\int_\Omega\nabla\cdot(\kappa(\hat{\theta})\nabla\hat{\theta}-\kappa(\widetilde{\theta})\nabla\widetilde{\theta})\Theta{\rm d}\bm{x}
\geq\frac{1}{\sigma}\|\nabla\Theta\|_{L^2}^2+\int_\Omega(\kappa(\hat{\theta})-\kappa(\widetilde{\theta}))\partial_i\widetilde{\theta}\partial_i\Theta{\rm d}\bm{x}.
\eeno
Substituting the above estimates into \eqref{uni-est}, we can get
\ben\label{uni-est1}
&&\f{{\rm d}}{{\rm d}t}(\|\mathbf{U}\|_{L^2}^2+\|\mathbf{V}\|_{L^2}^2+\|\Theta\|_{L^2}^2)
+\frac{1}{\sigma}(\|\nabla\mathbf{U}\|_{L^2}^2+\|\nabla\mathbf{V}\|_{L^2}^2+\|\nabla\Theta\|_{L^2}^2)\notag\\
&\leq&Ce^{-\alpha t}(\|\mathbf{U}\|_{L^2}^2+\|\mathbf{V}\|_{L^2}^2+\|\Theta\|_{L^2}^2)
-\int_\Omega(\mu(\hat{\theta})-\mu(\widetilde{\theta}))\partial_i\widetilde{u}_j\partial_iU_j{\rm d}\bm{x}\notag\\
&&-\int_\Omega(\nu(\hat{\theta})-\nu(\widetilde{\theta}))\partial_i\widetilde{v}_j\partial_iV_j{\rm d}\bm{x}
-\int_\Omega(\kappa(\hat{\theta})-\kappa(\widetilde{\theta}))\partial_i\widetilde{\theta}\partial_i\Theta{\rm d}\bm{x}\notag\\
&=&\sum_{i=1}^4I_i.
\een
By direct calculations,
\ben\label{uni-est10}
|I_2+I_3+I_4|
&\leq&\|\nabla\mathbf{U}\|_{L^2}\|\nabla\widetilde{\bm{u}}\|_{L^4}\|\mu(\hat{\theta})-\mu(\widetilde{\theta})\|_{L^4}
+\|\nabla\mathbf{V}\|_{L^2}\|\nabla\widetilde{\bm{v}}\|_{L^4}\|\nu(\hat{\theta})-\nu(\widetilde{\theta})\|_{L^4}\notag\\
&&+\|\nabla\Theta\|_{L^2}\|\nabla\widetilde{\mathbf{\theta}}\|_{L^4}\|\kappa(\hat{\theta})-\kappa(\widetilde{\theta})\|_{L^4},
\een
Also, making use of Lagrange mean value theorem and the fact that $\mu(\theta)$ is a smooth function, we conclude
\ben\label{uni-est11}
&&\|\mu(\hat{\theta})-\mu(\widetilde{\theta})\|_{L^4}\notag\\
&\leq&\|\mu(\hat{\theta})-\mu(\widetilde{\theta})\|_{L^2}^{\frac{1}{2}}
\|\mu'(\hat{\theta})\nabla\hat{\theta}-\mu'(\widetilde{\theta})\nabla\widetilde{\theta}\|_{L^2}^{\frac{1}{2}}
+\|\mu(\hat{\theta})-\mu(\widetilde{\theta})\|_{L^2}\notag\\
&\leq&\|\mu'(\xi)\Theta\|_{L^2}^{\frac{1}{2}}(\|\mu'(\hat{\theta})\nabla\Theta\|_{L^2}^{\frac{1}{2}}
+\|(\mu'(\hat{\theta})-\mu'(\widetilde{\theta}))\nabla\widetilde{\theta}\|_{L^2}^{\frac{1}{2}})
+\|\mu'(\xi)\Theta\|_{L^2}\notag\\
&\leq&C\|\Theta\|_{L^2}^{\frac{1}{2}}(\widetilde {M}^\frac{1}{2}\|\nabla\Theta\|_{L^2}^{\frac{1}{2}}+\|\mu''(\eta)\Theta\nabla\widetilde{\theta}\|_{L^2}^{\frac{1}{2}})
+C\|\Theta\|_{L^2}\notag\\
&\leq&C\|\Theta\|_{L^2}^{\frac{1}{2}}\|\nabla\Theta\|_{L^2}^{\frac{1}{2}}
+C\|\Theta\|_{L^2}\|\nabla\widetilde{\theta}\|_{L^\infty}^{\frac{1}{2}}+C\|\Theta\|_{L^2}\notag\\
&\leq&C\|\Theta\|_{L^2}^{\frac{1}{2}}\|\nabla\Theta\|_{L^2}^{\frac{1}{2}}
+C\|\Theta\|_{L^2}\|\widetilde{\theta}\|_{H^3}^{\frac{1}{2}}+C\|\Theta\|_{L^2},
\een
where $\xi$ and $\eta$ lie between $\hat{\theta}$ and $\widetilde{\theta}$.
Likewise, we hold the same conclusions for $\|\nu(\hat{\theta})-\nu(\widetilde{\theta})\|_{L^4}$ and $\|\kappa(\hat{\theta})-\kappa(\widetilde{\theta})\|_{L^4}$.
Putting \eqref{uni-est11} into \eqref{uni-est10}, one can infer
\ben\label{uni-est12}
&&|I_2+I_3+I_4|\notag\\
&\leq&C(\|\Theta\|_{L^2}^{\frac{1}{2}}\|\nabla\Theta\|_{L^2}^{\frac{1}{2}}
+\|\Theta\|_{L^2}\|\widetilde{\theta}\|_{H^3}^{\frac{1}{2}}+\|\Theta\|_{L^2})
\Big[\|\nabla\mathbf{U}\|_{L^2}(\|\nabla\widetilde{\bm{u}}\|_{L^2}^{\frac{1}{2}}\|\nabla^2\widetilde{\bm{u}}\|_{L^2}^{\frac{1}{2}}+\|\nabla\widetilde{\bm{u}}\|_{L^2})\notag\\
&&+\|\nabla\mathbf{V}\|_{L^2}(\|\nabla\widetilde{\bm{v}}\|_{L^2}^{\frac{1}{2}}\|\nabla^2\widetilde{\bm{v}}\|_{L^2}^{\frac{1}{2}}+\|\nabla\widetilde{\bm{v}}\|_{L^2})
+\|\nabla\Theta\|_{L^2}(\|\nabla\widetilde{\theta}\|_{L^2}^{\frac{1}{2}}\|\nabla^2\widetilde{\theta}\|_{L^2}^{\frac{1}{2}}+\|\nabla\widetilde{\theta}\|_{L^2})\Big]\notag\\
&\leq&Ce^{-\frac{\alpha t}{2}}(\|\nabla\mathbf{U}\|_{L^2}+\|\nabla\mathbf{V}\|_{L^2}+\|\nabla\Theta\|_{L^2})(\|\Theta\|_{L^2}^{\frac{1}{2}}\|\nabla\Theta\|_{L^2}^{\frac{1}{2}}
+\|\Theta\|_{L^2}\|\widetilde{\theta}\|_{H^3}^{\frac{1}{2}}+\|\Theta\|_{L^2})\notag\\
&\leq&\frac{1}{2\sigma}(\|\nabla\mathbf{U}\|_{L^2}^2+\|\nabla\mathbf{V}\|_{L^2}^2+\|\nabla\Theta\|_{L^2}^2)
+Ce^{-\alpha t}(\|\Theta\|_{L^2}^2+\|\Theta\|_{L^2}^2\|\widetilde{\theta}\|_{H^3}).
\een
Inserting \eqref{uni-est12} into \eqref{uni-est1}, we are able to acquire
\ben\label{uni-est2}
&&\frac{{\rm d}}{{\rm d}t}(\|\mathbf{U}\|_{L^2}^2+\|\mathbf{V}\|_{L^2}^2+\|\Theta\|_{L^2}^2)
+\frac{1}{\sigma}(\|\nabla\mathbf{U}\|_{L^2}^2+\|\nabla\mathbf{V}\|_{L^2}^2+\|\nabla\Theta\|_{L^2}^2)\notag\\
&\leq&Ce^{-\alpha t}(\|\mathbf{U}\|_{L^2}^2+\|\mathbf{V}\|_{L^2}^2+\|\Theta\|_{L^2}^2)
+Ce^{-\alpha t}\|\Theta\|_{L^2}^2\|\widetilde{\theta}\|_{H^3}\notag\\
&\leq&C(e^{-\alpha t}+e^{\alpha t}\|\widetilde{\theta}\|_{H^3}^2)(\|\mathbf{U}\|_{L^2}^2+\|\mathbf{V}\|_{L^2}^2+\|\Theta\|_{L^2}^2).
\een
Applying the Gronwall inequality, it yields
\ben\label{uni-est3}
&&\|\mathbf{U}\|_{L^2}^2+\|\mathbf{V}\|_{L^2}^2+\|\Theta\|_{L^2}^2
+\frac{1}{\sigma}\int_0^t(\|\nabla\mathbf{U}\|_{L^2}^2+\|\nabla\mathbf{V}\|_{L^2}^2+\|\nabla\Theta\|_{L^2}^2){\rm d}\tau\notag\\
&\leq&(\|\mathbf{U}_0\|_{L^2}^2+\|\mathbf{V}_0\|_{L^2}^2+\|\Theta_0\|_{L^2}^2)
\exp\Big[{\int_0^t(e^{-\alpha\tau}+e^{\alpha\tau}\|\widetilde{\theta}\|_{H^3}^2){\rm d}\tau}\Big]
=0.
\een
Therefore,
\beno
\mathbf{U}=\mathbf{V}=\Theta\equiv0.
\eeno
This finishes the proof of Theorem \ref{3D}.
\end{proof}

\section*{Acknowledgement}
Liu was partially supported by National Natural Science Foundation of China under grant (No.  11801018, No. 12061003), Beijing Natural Science Foundation under grant (No. 1192001) and Beijing University of Technology under grant (No. 006000514123513).

\end{document}